\theoremstyle{plain}
\newtheorem{theorem}{Theorem}[section]
\newtheorem{lemma}{Lemma}[section]
\newtheorem{proposition}{Proposition}[section]
\newtheorem{corollary}{Corollary}[section]
\theoremstyle{definition}
\newtheorem{definition}{Definition}[section]
\newtheorem{example}{Example}[section]
\theoremstyle{remark}
\newtheorem{remark}{Remark}[section]
\title{Hermitian analogues of Hilbert's 17-th problem}
\author{John P. D'Angelo}
\address{Dept. of Mathematics, Univ. of Illinois, 1409 W. Green St., Urbana IL 61801}
\email{jpda@math.uiuc.edu}
\begin{document}

\maketitle

\begin{abstract} We pose and discuss several Hermitian analogues of Hilbert's $17$-th problem.
We survey what is known, offer many explicit examples and some proofs, and give applications to
CR geometry. We prove one new algebraic theorem: a non-negative Hermitian symmetric polynomial
divides a nonzero squared norm if and only if it is a quotient of squared norms. We also discuss
a new example of Putinar-Scheiderer.

\medskip

\noindent
{\bf AMS Classification Numbers}: 12D15, 14P05, 15B57, 32A70, 32H35, 32V15.

\medskip

\noindent
{\bf Key Words}: Hilbert's $17$-th problem, Hermitian forms, squared norms, signature pairs, CR complexity theory, proper holomorphic mappings.
\end{abstract}

\section{Introduction}

Hilbert's $17$-th problem asked whether a non-negative polynomial in several real variables must
be a sum of squares of rational functions. E. Artin answered the question in the affirmative in 1927, using
the Artin-Schreier theory of real fields. Around 1955 A. Robinson gave another 
proof using model theory. See [PD] and [S] for much more information about Hilbert's 
problem. See [R1] for references to recent work and results on concrete aspects of Hilbert's problem. 
See [He] and [HP] for results and applications in the non-commutative setting.

This present paper
aims to survey and organize various results that might be called Hermitian or complex variable analogues of Hilbert's problem.
We also obtain a striking new result in Theorem 5.3.
The results here and their proofs have a rather different flavor from Hilbert's problem; they are connected for example
with ideas such as mapping problems in several complex variables
and CR geometry, analytic tools such as compact operators and the Bergman projection, and metrics on holomorphic vector bundles.
See [CD1], [CD3], [D2], [D3], [D4], [HP], [Q], [TV], and [V] and their references for additional discussion along the lines of this paper.
Both the real and complex cases involve subtle aspects
of zero-sets and how they are defined. The author modestly
hopes that this paper will encourage people to apply the diverse techniques from the real case to
the Hermitian case and that the techniques from the Hermitian case will be useful in the real case as well.

The complex numbers are not an ordered field, and hence to consider non-negativity we must restrict to real-valued polynomials.
The natural starting point will be {\it Hermitian symmetric} polynomials in several complex variables; there is a one-to-one correspondence
between real-valued polynomials on ${\bf R}^{2n}$ and Hermitian symmetric polynomials on ${\bf C}^n \times {\bf C}^n$.
We begin by clarifying this matter.

Let $\rho$ be a real-valued polynomial on ${\bf R}^{2n}$. Call the real variables $(x,y)$; setting $x = {z + {\overline w} \over 2}$
and $y = { z - {\overline w} \over 2i}$  then determines a polynomial $r$ on ${\bf C}^n \times {\bf C}^n$ defined by

$$ r(z, {\overline w}) = \rho({z+{\overline w} \over 2}, {z - {\overline w} \over 2i}). \eqno (1) $$
Polynomials such as $r$ satisfy the Hermitian symmetry condition

$$ r(z, {\overline w}) = {\overline {r(w, {\overline z})}}. \eqno (2) $$
We say that $r$ is Hermitian symmetric in $n$ variables.

\begin{proposition} Let $r: {\bf C}^n \times {\bf C}^n \to {\bf C}$ be a polynomial in $(z,{\overline w})$. The following statements are equivalent:

\begin{itemize}
\item $r$ is Hermitian symmetric. That is,  (2) holds for all $z,w$.

\item The function $z \to r(z,{\overline z})$ is real-valued.

\item  We can write $r(z,{\overline w}) = \sum_{\alpha, \beta} c_{\alpha \beta} z^\alpha {\overline w}^\beta$ where
the {\it matrix of coefficients} is Hermitian symmetric: $c_{\alpha \beta} = {\overline {c_{\beta \alpha}}}$ for all $\alpha, \beta$.
\end{itemize} 
\end{proposition}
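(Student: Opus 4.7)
The plan is to prove the cycle of implications (2)$\Rightarrow$(1)$\Leftrightarrow$(3), or perhaps more symmetrically (1)$\Rightarrow$(2)$\Rightarrow$(3)$\Rightarrow$(1); all three directions reduce to short algebraic manipulations, but one needs a little care because (2) is a statement about values of $r$ on the totally real diagonal $\{w=z\}$ while (1) and (3) are identities in $(z,\overline{w})$ treated as independent formal variables.

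First I would handle (1)$\Rightarrow$(2) immediately: specialising $w=z$ in (2) of the excerpt gives $r(z,\overline{z})=\overline{r(z,\overline{z})}$, so the function is real-valued. The implication (3)$\Rightarrow$(1) is a direct computation: conjugating $r(w,\overline{z})=\sum c_{\alpha\beta}w^\alpha\overline{z}^\beta$ yields $\sum\overline{c_{\alpha\beta}}\,\overline{w}^\alpha z^\beta$; invoking the Hermitian symmetry of coefficients and then relabelling the summation indices $\alpha\leftrightarrow\beta$ recovers $\sum c_{\alpha\beta}z^\alpha\overline{w}^\beta=r(z,\overline{w})$.

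The main obstacle is (2)$\Rightarrow$(3). Writing $r(z,\overline{z})=\sum c_{\alpha\beta}z^\alpha\overline{z}^\beta$ and equating it with its own complex conjugate $\sum\overline{c_{\beta\alpha}}z^\alpha\overline{z}^\beta$ (after swapping dummy indices in the conjugated sum) formally gives $\sum(c_{\alpha\beta}-\overline{c_{\beta\alpha}})z^\alpha\overline{z}^\beta\equiv0$ as a function of $z\in\mathbb{C}^n$. To conclude $c_{\alpha\beta}=\overline{c_{\beta\alpha}}$ I need that the monomials $\{z^\alpha\overline{z}^\beta\}$ are linearly independent as functions on $\mathbb{C}^n$. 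This is the one non-trivial point and I would justify it by polarisation: substituting $z=x+iy$ with $x,y\in\mathbb{R}^n$ turns such a sum into a real polynomial in $2n$ real variables, and its vanishing on $\mathbb{R}^{2n}$ forces all real coefficients to vanish; undoing the linear change of variables then forces each $c_{\alpha\beta}-\overline{c_{\beta\alpha}}$ to vanish. Equivalently one can argue by successive application of the differential operators $\partial^\alpha\overline{\partial}^\beta$ at the origin.

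Once (2)$\Rightarrow$(3) is established, one more short calculation completes the cycle: starting from (3), compute $\overline{r(w,\overline{z})}$ and relabel indices as in the first paragraph to recover (1). The logical structure is thus (1)$\Rightarrow$(2)$\Rightarrow$(3)$\Rightarrow$(1), with the only substantive content being the linear independence of the $z^\alpha\overline{z}^\beta$ used in the middle step.
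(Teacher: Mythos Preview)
Your proof is correct and complete; the cycle (1)$\Rightarrow$(2)$\Rightarrow$(3)$\Rightarrow$(1) works exactly as you describe, and your justification of the linear independence of the monomials $z^\alpha\overline{z}^\beta$ (via real coordinates or via differentiation at the origin) is the right way to handle the one non-obvious step. The paper itself states Proposition~1.1 without proof, treating it as elementary background, so there is no argument in the paper to compare yours against.
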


Conversely, given a Hermitian symmetric polynomial $r$, the function $z \to r(z,{\overline z})$ can be regarded as a polynomial
in the real and imaginary parts of $z$. We express the ideas via Hermitian symmetric polynomials, for several compelling reasons: the
role of complex analysis is evident, we can polarize by treating $z$ and ${\overline z}$ as independent variables, and
Hermitian symmetry leads to elegance and simplicity not observed in the real setting.

Proposition 1.1 suggests using Hermitian linear algebra to study real-valued polynomials $z \to r(z, {\overline z})$.
The polynomial is Hermitian symmetric if and only if the matrix $C= (c_{\alpha \beta})$ is Hermitian symmetric. On the other hand,
the condition that $r$ be {\it non-negative} as a function is {\bf not} the same as the non-negativity of the matrix
$C$. Much of our work will be firmly based on clarifying this point.

First we introduce a natural concept. We say that ${\bf s}(r) = (A,B)$
if $C$ has $A$ positive and $B$ negative eigenvalues. We call $(A,B)$ the {\it signature pair} of $r$ and
$A+B$ the {\it rank} of $r$. See [D1] or [D3] for versions and applications of the following basic statement.

\begin{proposition} Let $r:{\bf C}^n \times {\bf C}^n \to {\bf C}$ be a Hermitian symmetric polynomial. Then
${\bf s}(r) = (A,B)$ if and only if there are linearly independent holomorphic polynomials $f_1,..., f_A, g_1,...g_B$ such that
$$r(z, {\overline z}) = \sum _{j=1}^A |f_j(z)|^2 - \sum_{j=1}^B |g_j(z)|^2 = ||f(z)||^2 - ||g(z)||^2. \eqno (3) $$
\end{proposition}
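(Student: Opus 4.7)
The plan is to reduce everything to the spectral theorem for the Hermitian coefficient matrix $C = (c_{\alpha \beta})$, using the vector of monomials as a bridge between polynomials and linear algebra. Let $Z(z) = (z^{\alpha_1}, \dots, z^{\alpha_N})^T$ enumerate the (finitely many) monomials that appear in $r$ up to the relevant degree. The point is that Hermitian symmetry gives
$$ r(z, \overline z) = \sum_{\alpha, \beta} c_{\alpha \beta}\, z^\alpha \overline{z}^\beta, $$
and any factorization $C = M^* D M$ with $D$ diagonal real translates into a sum-of-squared-norms representation of $r$ with the signs dictated by $D$.

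For the forward direction $(A,B) \Rightarrow$ decomposition, I would diagonalize $C = U \Lambda U^*$ by the spectral theorem, with $\Lambda = \mathrm{diag}(\lambda_1, \dots, \lambda_A, -\mu_1, \dots, -\mu_B, 0, \dots, 0)$ where all $\lambda_j, \mu_j > 0$. Substituting back into the displayed formula collapses the double sum to
$$ r(z, \overline z) = \sum_{k=1}^{A} \lambda_k \bigl| p_k(z) \bigr|^2 - \sum_{k=1}^{B} \mu_k \bigl| q_k(z) \bigr|^2, $$
where $p_k(z) = \sum_\alpha U_{\alpha k} z^\alpha$ and analogously for $q_k$, with coefficient vectors given by the columns of $U$ corresponding to nonzero eigenvalues. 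Setting $f_k = \sqrt{\lambda_k}\, p_k$ and $g_k = \sqrt{\mu_k}\, q_k$ does the job; linear independence of these polynomials is inherited from the orthonormality of the relevant columns of $U$, since distinct monomials $z^{\alpha_j}$ are linearly independent as polynomials.

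For the converse, suppose $r = \sum_{j=1}^A |f_j|^2 - \sum_{j=1}^B |g_j|^2$ with the $A+B$ polynomials linearly independent. Expanding each $h_i \in \{f_1, \dots, f_A, g_1, \dots, g_B\}$ in monomials assembles a matrix $M$ of size $(A+B) \times N$ whose $i$-th row is the coefficient vector of $h_i$. A direct calculation gives the matrix identity $C = M^* D M$ with $D = \mathrm{diag}(I_A, -I_B)$. The quadratic form computation
$$ \langle C x, x \rangle = \langle D (Mx), Mx \rangle $$
then determines the signature: linear independence of the $h_i$ makes $M$ surjective, so $Mx$ ranges over all of $\mathbb{C}^{A+B}$ as $x$ varies, showing that $C$ admits an $A$-dimensional positive subspace and a $B$-dimensional negative subspace. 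Since $\mathrm{rank}(C) \leq \mathrm{rank}(M) = A+B$, the signature is exactly $(A,B)$.

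The only subtle point — and the step I expect to be easiest to botch — is the converse: one must actually use the full linear independence of the combined set $\{f_1,\dots,f_A,g_1,\dots,g_B\}$, not just independence within each group. Without it, the ranges of the positive and negative parts could overlap, collapsing eigenvalues and lowering the signature, so that step really drives the whole equivalence.
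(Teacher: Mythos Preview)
The paper does not actually prove this proposition; it merely states it, calling it a ``basic statement'' and referring the reader to [D1] or [D3]. Your spectral-theorem argument is correct and is essentially the standard proof one would find in those references: diagonalize the Hermitian coefficient matrix to produce the decomposition, and for the converse use $C = M^* D M$ together with Sylvester's law of inertia. One cosmetic point: depending on whether you write $r(z,\overline z)$ as $Z^* C Z$ or $\overline{Z}^{\,T} C Z$, the coefficient vector of $p_k$ may be the $k$-th column of $U$ or its conjugate, but either way the columns are orthonormal, so the linear-independence claim is unaffected. Your closing remark about needing independence of the \emph{combined} family $\{f_j\}\cup\{g_j\}$ is exactly right and is the substantive point in the converse.
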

If ${\bf s}(r) = (A,0)$ for some $A$ (including $0$) then we call $r$ a {\it squared norm}.
Squared norms take only non-negative values, but non-negative Hermitian symmetric functions
need not be squared norms. In a moment we will give two simple but instructive examples.
In order to clarify these examples and to state several analogues of Hilbert's problem,
we introduce some of the positivity conditions we will be using. See Section 2 for a detailed discussion of these and several other conditions.

\begin{definition} Positivity classes of Hermitian symmetric polynomials. 
\begin{itemize} 
\item ${\mathcal P}_1 = {\mathcal P}_1(n)$ denotes the set of non-negative Hermitian symmetric polynomials in $n$ variables.
\item ${\mathcal P}_\infty = {\mathcal P}_\infty(n)$ denotes the set of Hermitian symmetric polynomials in $n$ variables that
are squared norms of holomorphic polynomial mappings. Thus $r \in {\mathcal P}_\infty$ if and only if $r= ||h||^2$ for a holomorphic polynomial mapping $h$.
\item ${\mathcal Q} = {\mathcal Q}(n)$ denotes the set of polynomials that are quotients of elements of ${\mathcal P}_\infty$. 
Thus $r = {||F||^2 \over ||G||^2}$ for holomorphic polynomial mappings $F,G$.
\item ${\mathcal Q}' = {\mathcal Q}'(n)$ denotes the set of $r \in {\mathcal P}_1$ 
for which there is an $s \in {\mathcal P}_1$ (not identically $0$)
and a holomorphic polynomial mapping $F$ with $rs = ||F||^2$.
\item ${\rm rad} ({\mathcal P}_\infty)$ denotes the set of $r \in {\mathcal P}_1$ for which there is an integer $N$ such that $r^N \in {\mathcal P}_\infty$. Thus
$r^N = ||h||^2$.
\end{itemize}
\end{definition}

The following inclusions are easy to verify:

$$ {\mathcal P}_\infty \subset {\mathcal Q} \subset {\mathcal Q}' \subset {\mathcal P}_1. \eqno (4)$$

$$ {\mathcal P}_\infty \subset {\rm rad}({\mathcal P}_\infty) \subset {\mathcal Q}' \subset {\mathcal P}_1. \eqno (5) $$

Most of these inclusions are strict. Here are simple but instructive examples. See also Example 2.1.

\begin{example} For $\alpha \in {\bf R}$, for $n=1$ and $z=x+iy$, put
$$ r(z,{\overline z}) = \alpha (z+{\overline z})^2 + |z|^2=  (1+4 \alpha)x^2 + y^2. \eqno (6)$$
The following statements hold: 
\begin{itemize}
\item $r \in {\mathcal P}_1$ if and only if $\alpha \ge {-1 \over 4}$.
\item $ r\in {\mathcal P}_\infty$  if and only if $\alpha = 0$. 
\item $r \in {\mathcal Q}$   if and only if $\alpha = 0$. 
\item $r \in {\mathcal Q}'$   if and only if $\alpha = 0$. 
\end{itemize}
Next, for $\lambda \in {\bf R}$, and for $n=2$, put 
$$ r(z,{\overline z}) = |z_1|^4 + \lambda |z_1 z_2|^2 + |z_2|^4. \eqno (7) $$
The following statements hold: 
\begin{itemize}
\item $r \in {\mathcal P}_1$ if and only if $\lambda\ge -2$.
\item $ r\in {\mathcal P}_\infty$  if and only if $\lambda \ge 0$. 
\item $r \in {\mathcal Q}$   if and only if $\lambda > -2$. 
\item $r \in {\mathcal Q}'$   if and only if $\lambda > -2$. 
\end{itemize}
 \end{example}

Example 1.1 shows that two of the containments in (4) are strict. In Example 2.1 we
will see that both containments in (5) are strict.
In Section 5 we prove a surprising result: 
$$ {\mathcal Q} = {\mathcal Q}'. \eqno (8) $$

In many instructive examples the coefficients depend on parameters.
Let $K$ be a closed subset of ${\bf R}^k$. Suppose for each $\lambda \in 
K$ that $c_{\alpha \beta}(\lambda)$ is a Hermitian symmetric matrix
and that the map $\lambda \to c_{\alpha \beta}(\lambda)$
is continuous. We consider the family of Hermitian symmetric polynomials $r_\lambda$ defined for $\lambda \in K$ by
$$ r_\lambda (z,{\overline w}) = \sum c_{\alpha \beta}(\lambda) z^\alpha {\overline w}^\beta .  $$
Let ${\mathcal S}$ be a set of Hermitian symmetric polynomials. 
We say that ${\mathcal S}$ is {\it closed under limits} if, whenever $r_\lambda \in {\mathcal S}$ and 
${\rm lim}(\lambda) = L$, then $r_L \in {\mathcal S}$. By Example 1.1, ${\mathcal Q}$ is not closed under limits. It
is however closed under sum and product. 

In addition to determining which of the containments are strict,
we would like to provide alternative characterizations of the various sets.
For example, two separate results mentioned in Remark 2.1 each characterize ${\mathcal P}_\infty$. These remarks therefore suggest
the following analogues of Hilbert's problem. We discuss answers to 
Analogue 1 from [V], [D4], [D5]. One new aspect of this paper is the introduction and analysis of ${\mathcal Q}'$
and Analogue 2. Theorem 5.3 states that ${\mathcal Q}(n)= {\mathcal Q}'(n)$ for all $n$ and hence answers Analogue 2.
So far Analogue 3 has no nice answer. See Section 7 for some results. See [DP] and the discussion near Example 3.1 for
more on Analogue 4. 
Below we pose additional questions related to all these analogues.

\medskip

{\bf Analogue 1}. Give tractable necessary and sufficient conditions for a polynomial to lie in ${\mathcal Q}$.

\medskip

{\bf Analogue 2}. Give tractable necessary and sufficient conditions for a polynomial to lie in ${\mathcal Q}'$.

\medskip

{\bf Analogue 3}. Give tractable necessary and sufficient conditions for a polynomial to lie in ${\rm rad}({\mathcal P}_\infty)$.

\medskip

{\bf Analogue 4}. Generalize the discussion to algebraic sets and ideals. For example, if a polynomial is positive on an algebraic set,
must it agree with a squared norm there?

\subsection*{Similarities and differences between the real and complex cases}

We pursue the analogy with Hilbert's problem and discover some significant differences.

In the real case, after putting everything over a common denominator, we can state Artin's theorem as follows.
A real polynomial $r$ is non-negative if and only if there is a polynomial $q$ such that
$q^2 r$ is a sum of squares of polynomials. Thus
$$ q^2 r = \sum p_j^2 = ||p||^2. \eqno (9.1)$$

In the complex case, let $r= {||f||^2 \over ||g||^2}$ be a quotient of 
squared norms. Then we have
$$ ||g||^2 r = ||f||^2. \eqno (9.2) $$ 
In both cases we can regard the denominator as a multiplier to bring us into the good situation of squared norms.
Notice however a difference between (9.1) and (9.2). In (9.1) it suffices for the multiplier
to be the square of a single polynomial. In (9.2), even by allowing the rank of $||g||^2$ to be arbitrarily large,
we still do not get all non-negative Hermitian symmetric $r$. Hence we naturally allow the possibility
$$ s r = ||f||^2, \eqno (9.3) $$
where $s$ is an arbitrary non-negative Hermitian symmetric polynomial. 
We still do not get all non-negative polynomials $r$ in this way.

In both the real and complex cases we naturally seek the minimum number of terms required in the sums on the right-hand sides.
A famous result of Pfister [Pf] says in $n$ real dimensions that $2^n$ terms suffice; 
this result is remarkable for two reasons. First, it is independent of the degree. Second, despite considerable work, 
it is unknown what the sharp bound is.
We sound one warning. For $n\ge 2$ there exist non-negative polynomials in $n$ variables that cannot be written
as sums of squares with $2^n$ terms. This statement does not contradict Pfister's result, which says after multiplication by some $q^2$ that the product
can be written as a sum of squares with at most $2^n$ terms.

In the complex case, when $r$ satisfies (9.3) we seek the minimum 
possible rank of $||f||^2$ and when $r$ satisfies (9.2) we also seek the minimum possible rank for $||g||^2$.
No bounds exist depending on only the dimension and the degree of $r$.
If $n=1$ and $r(z,{\overline z}) = (1+|z|^2)^d$, then the rank of $r$ is $d+1$, which obviously depends on the degree.
One cannot write $r$ (or any non-zero multiple of $r$) as a squared norm with fewer than $d+1$ terms. Hence
the analogue of Pfister's result fails. The warning above suggests that we must consider
the possibility of rank dropping under multiplication, and thus motivates Section 9. 
Proposition 9.1 gives an example of maximal collapse in rank. 

\medskip
{\bf Question 1}. Assume $r \in {\mathcal Q}'$. What is the 
minimum rank of any non-zero squared norm $||f||^2$ divisible by $r$?
\medskip

Consider
the polynomial $r_\lambda$ defined in (7).  When $\lambda > -2$, 
$r_\lambda$ is a quotient ${||f_\lambda||^2 \over ||g_\lambda||^2}$ of squared norms and specific maps
$f_\lambda$ and $g_\lambda$ are known. The ranks of these squared norms both tend to infinity
as $\lambda$ tends to $-2$. When $\lambda = -2$, $r$ is not even in ${\mathcal Q}'$. The reason
is that its zero-set is not contained in any complex algebraic variety of 
positive codimension. See the discussion following Definition 1.2.

See Definition 2.3 for the meaning of bihomogeneous. We will pass back and forth between arbitrary Hermitian polynomials
and bihomogeneous ones. 

The paper [S] discusses situations in the real setting regarding sums of squares 
where one must carefully distinguish between positivity and non-negativity. Zero-sets matter.
Our analogues of Hilbert's problem also involve subtle issues about zero-sets. For example, the main result
in [Q] or [CD1] (see Theorem 3.1) implies the following. If $r$ is bihomogeneous and strictly positive away from the origin,
then $r \in {\mathcal Q}$. On the other hand, put $r(z,{\overline z}) = (|z_1|^2 - |z_2|^2)^2$; thus $\lambda=-2$ in (7).
Then $r$ is bihomogeneous but it vanishes along the wrong kind of set for it to divide a squared norm (except $0$).
 Let ${\bf V}(r)$ denote the zero-set of
$r$. For $r$ to be in ${\mathcal Q}$ or ${\mathcal Q}'$, not only must ${\bf V}(r)$ be a complex variety,
but $r$ must define it correctly. 

\begin{definition} A non-negative Hermitian 
 polynomial $r$ has a {\it properly defined} zero-set if there is a 
holomorphic 
polynomial mapping $h$, $\epsilon > 0$,  and a Hermitian 
 polynomial $s$ such that 
 $s \ge \epsilon > 0$ for which $r= ||h||^2 s$. \end{definition}

\begin{example} For $n=1$ and $\alpha \ge 0$ put $r(z,{\overline z}) = \alpha |z|^2 + (z+{\overline z})^2$. If $\alpha > 0$,
then ${\bf V}(r) = \{0\}$, but $r$ defines $0$ in the wrong way.  When
$\alpha = 0$, ${\bf V}(r)$ is even worse;
 it is the line given by $x=0$.
 In either case, $r$ is not in ${\mathcal Q}'$.
The polynomial $1+r$ is also not in ${\mathcal Q}'$. See Theorem 4.1.
\end{example}

Thus, if $r \in {\mathcal Q}'$, then $r$ has a properly defined zero-set. 
By Example 1.2,  the converse fails, even when $r$  is strictly positive.
We also must be careful because there exist positive polynomials whose infima are zero.
See Example 4.1.

Suppose $r(z,{\overline z})$ is divisible (as a polynomial) by 
$||h(z)||^2$
 for a non-constant holomorphic polynomial mapping $h$. Then ${\bf V}(r)$ 
contains the complex variety defined by $h$.
By Lemma 2.4, $r \in {\mathcal Q}$ if and only if ${r \over ||h||^2} \in 
{\mathcal Q}$. 
The same statement holds with ${\mathcal Q}$ replaced by ${\mathcal Q}'$.
There is no loss in generality if we therefore assume that all such 
factors have been canceled.
The result might still have zeroes and hence cause trouble.

We briefly return to the holomorphic decomposition (3) of a Hermitian symmetric polynomial.
Let $r$ be a bihomogeneous Hermitian symmetric polynomial, and assume $r$ is not identically $0$. 
If $r \in {\mathcal P}_\infty$, then we may write $r=||f||^2$, where the components of $f$ are linearly independent. 
By [D1], $f$ is determined up
to a unitary transformation. We regard this situation as understood.

Suppose next that $r \in {\mathcal P}_1$ but $r$ is not in ${\mathcal P}_\infty$.
We write $r=||f||^2 -||g||^2$ where $g \ne 0$
and the components of $f$ and $g$ form a linearly independent set. 
Consider, for each $\lambda \in {\bf R}$, the family
$r_\lambda$ defined by
$$ r_\lambda (z,{\overline z}) = ||f(z)||^2 - \lambda ||g(z)||^2. \eqno (10) $$
For $\lambda \le 0$ it is obvious that $r_\lambda \in {\mathcal P}_\infty$, and this case is understood.
For $0 \le \lambda \le 1$, $r_\lambda$ defines a homotopy between $||f||^2$ and $r$.

Varolin's solution to Analogue 1, 
although expressed in different language in [V], amounts to saying (after dividing out factors of the form $|h|^2$) that 
$r \in {\mathcal Q}$ if and only if there is a $\lambda > 1$ such that $r_\lambda \in {\mathcal P}_1$.
(Equivalently, if there is a constant $c < 1$ such that $||g||^2 \le c ||f||^2$.)
Varolin works in the bihomogeneous setting and even more generally with Hermitian combinations of sections of certain line bundles
over compact complex manifolds. Note that a homogeneous polynomial may be regarded as a section of a power of the hyperplane bundle over projective space.
We briefly discuss such considerations in Section 8.

Varolin's proof uses the resolution of singularities to reduce to bihomogeneous polynomials in two complex variables.
Dehomogenizing then reduces to the case of
one complex dimension, where the problem was solved in [D4]. We give an improved treatment of that work in Section 4.
The proof in one dimension relies on the result in a nondegenerate situation in two dimensions, and we present that information in Section 3.
Zero-sets also play a crucial part in Varolin's approach. Our definition of properly defined zero
set differs slightly from his concept of {\it basic zeroes}. 

Our result that ${\mathcal Q} = {\mathcal Q}'$ solves Analogue 2.
By contrast, the author knows of no satisfactory answer to Analogue 3. See Section 7 for some information.

Analogue 4 has not yet been fully studied. Theorem 3.3 shows
that a Hermitian polynomial positive on the unit sphere agrees with a squared norm there.
Example 3.1 shows that there exist algebraic strongly pseudoconvex hypersurfaces $X$ 
and Hermitian polynomials $f$ such that $f>0$ on $X$ but $f$ agrees with no squared norm there.
One needs analogues of the ${\mathcal P}_k$ spaces for real polynomial ideals; see [DP] for recent work in this
direction. 

\subsection*{Signature pairs}
We close the introduction by considering signature pairs. 
By (9.2) we see that $r \in {\mathcal Q}$ if and only if there is a multiplier $||g||^2 = q \in {\mathcal P}_\infty$
such that $qr \in {\mathcal P}_\infty$. In particular, suppose ${\bf s}(r)=(A,B)$. Then there is a $q$ with ${\bf s}(q)= (N_1,0)$ such that
${\bf s}(qr) = (N_2,0)$. The integers $N_1$ and $N_2$ are complex variable analogues
of the number of terms required in the sums of squares of polynomials from the real variable setting.

We thus study the behavior of the signature pair under multiplication
in analyzing complex variable analogues of Hilbert's problem.
The papers [DL] and [G] apply results about signature pairs to CR Geometry.
In particular Grundmeier [G] computes
the signature pair for various group-invariant Hermitian symmetric polynomials
that determine invariant holomorphic polynomial mappings from spheres to hyperquadrics.

By definition $R \in {\mathcal P}_\infty$ if and only if its signature pair is $(N,0)$
for some $N$. The following question generalizes both Analogue 2 and 
Question 1.

\medskip
{\bf Question 2}.  Suppose $R$ is Hermitian symmetric and that it factors: 
$R=qr$. What can we say about the relationships among the signature pairs for $q,r, R$?

\medskip
Some partial answers to this question appear in [DL].
There exist  pairs of (quite special)
Hermitian symmetric polynomials with arbitrarily large ranks
whose product has signature pair $(2,0)$ and hence rank $2$. We call this phenomenon {\it collapsing of rank}. 
This collapse is sharp, in the sense that we cannot obtain rank $1$. A similar result fails
for real-analytic Hermitian symmetric functions. 

We also note the following fact, which is applied in [DL].
Given a pair $(A,B)$ with $A+B \ge 2$, there exist Hermitian polynomials $r_1$ and $r_2$ such that
all the entries in the signature pairs $(A_j, B_j)$ are positive, but yet the signature pair of the product is $(A,B)$.

The author thanks David Catlin, Jiri Lebl, Dror Varolin, and Mihai Putinar for various useful discussions over the years 
about this material. The author acknowledges support from NSF grant DMS 07-53978.

\section{positivity conditions}

In this section we define various positivity conditions and introduce notation for them.
The definitions include several notions not discussed in the introduction.
We assume that the dimension $n$ is fixed and do not include it in the notation.

\begin{definition} Positivity conditions for Hermitian symmetric polynomials.
\begin{itemize}

\item 1) $r \in {\mathcal P}_1$ if $r(z,{\overline z}) \ge 0$ for all $z$.

\item 2) For $k \in {\bf N}$, we say that $r \in {\mathcal P}_k$ if,
for every choice of $k$ points $z_1,...,z_k$ in ${\bf C}^n$, the matrix
$ r(z_i, {\overline z}_j) $ is non-negative definite. 

\item 3) $r \in {\mathcal P}_\infty$ if there is a holomorphic polynomial mapping
$f$ such that

$$ r(z,{\overline z}) = ||f(z)||^2. \eqno (11) $$
By Proposition 1.1 and linear algebra, (11) holds if and only if the underlying matrix $C$ of Taylor coefficients of $r$ is of the form $A^* A$.

\item 4)  $r \in {\mathcal Q}$ if $r$ is the quotient of elements of ${\mathcal P}_\infty$. In other words, there are holomorphic polynomial mappings
$F$ and $G$  such that
$$ r(z,{\overline z}) = {||F(z)||^2 \over ||G(z)||^2}. \eqno (12) $$

\item 5) $r \in {\mathcal Q}'$ if $r$ is in ${\mathcal P}_1$ and there is $s \in {\mathcal P}_1$ (not identically $0$)
and $||F||^2 \in {\mathcal P}_\infty$ such that $rs = ||F||^2$.

\item 6) $r \in {\rm rad}({\mathcal P}_\infty)$ if $r\ge 0$ and  there is 
a positive integer $N$ such that $r^N \in {\mathcal P}_\infty$. 

\item 7) $r$ satisfies the global Cauchy-Schwarz inequality
if, for all $z$ 
and $w$, 

$$ r(z,{\overline z})r(w,{\overline w}) \ge |r(z,{\overline w})|^2. \eqno (13) $$
If (13) holds, then $r$ achieves only one sign, and $|r| \in {\mathcal P}_1$.
\item 8) $r \in {\mathcal L}$ if  $r \ge 0$ and ${\rm log}(r)$ 
is plurisubharmonic.
\end{itemize}
\end{definition}

\begin{remark} It is well-known, and proved for example in [AM] and [DV], that $r \in {\mathcal P}_k$ for
all $k$ if and only if $r \in {\mathcal P}_\infty$. Thus
$$ {\mathcal P}_\infty = \cap_{j=1}^\infty {\mathcal P}_j, \eqno(14) $$
and (14) gives an alternative definition of ${\mathcal P}_\infty$. 

We mention a second characterization of squared norms from [HP]. Given $r(z,{\overline z})$, replace each variable $z_j$
by a matrix $Z_j$ (of arbitrary size) and replace ${\overline z}_j$ by the adjoint $Z_j^*$ to obtain $r(Z,Z^*)$.
Then $r \in {\mathcal P}_\infty$ if and only if, for all {\it commuting} $n$-tuples $Z=(Z_1,...,Z_n)$, we have
$r(Z,Z^*)\ge 0$. (All such matrices are non-negative definite.) \end{remark}

\begin{remark} Inequality (13) is a curvature condition; it arises when $r$ is regarded as a (possibly degenerate) 
metric on a holomorphic line bundle.
See [Cal], [CD3], [D2], and especially [V]. \end{remark}

From the definitions we immediately see that ${\mathcal P}_{k+1} \subset {\mathcal P}_k$ for all $k$.
Examples from [DV] show that the classes ${\mathcal P}_k$ are distinct.  On the other hand,
if the degrees of polynomials under consideration are bounded, then there is a $k_0$ such that the classes ${\mathcal P}_k$
are the same for $k \ge k_0$.  We recall a concept from [DV]. 

\begin{definition} (Stability Index) Let $S$ be a subset of ${\mathcal P}_1$.
We define $I(S)$ to be the smallest $k$ for which 

$$ S \cap {\mathcal P}_\infty = S \cap {\mathcal P}_k.$$
If no such $k$ exists we write $I(S) = \infty$. When $I(S)$ is finite we say that
$S$ is {\it stable}. \end{definition}

The stability index is computed in [DV] in several interesting situations. From that work
it follows that sets of Hermitian symmetric polynomials of bounded degree are stable.
In other words, only finitely many of the sets ${\mathcal P}_k$ are distinct if we fix the dimension
and bound the degree.

The author does not know if there are stability criteria for the result of [HP] mentioned in Remark 2.1 or the main result in [He].
It seems however that results in this direction could be quite useful.

\begin{remark} For each subset ${\mathcal P}_k$ there 
is a corresponding sharp version;
we demand that the matrix $R(z_i, {\overline z_j})$ be
positive definite whenever the points are distinct. \end{remark}

Recall that ${\mathcal S}$ is {\it closed under limits} if, whenever $r_\lambda \in {\mathcal S}$ and 
${\rm lim}(\lambda) = L$, then $r_L \in {\mathcal S}$. By Example 1.1, ${\mathcal Q}$ is not closed under limits. It
is however closed under sum and product. For each $k$ it is evident
that ${\mathcal P_k}$ is closed under limits. These sets are also closed under sum and product.
 See Lemma 2.4.

The following example offers some insight into the relationships among the various conditions.

\begin{example} [DV] Consider the family of polynomials $r_\lambda$ given by

$$ r_\lambda(z,{\overline z}) = (|z_1|^2 + |z_2|^2)^4 - \lambda |z_1 z_2|^4. \eqno (15) $$
The following hold:

\begin{itemize}
\item $r_\lambda \in {\mathcal P}_1$ if and only if $\lambda \le 16$.

\item $r_\lambda \in {\mathcal Q}$ if and only if $r_\lambda \in {\mathcal Q}'$ if and only if $\lambda < 16$.

\item $r_\lambda \in {\mathcal L}$  if 
and only if $\lambda \le 12$.

\item $r_\lambda \in {\mathcal P}_2$ if and only if $\lambda \le 8$. 

\item $r_\lambda \in {\rm rad}({\mathcal P}_\infty)$ if and only if $\lambda < 8$.

\item For $k >2$, $r_\lambda \in {\mathcal P}_k$ if and only if $r_\lambda \in {\mathcal P}_\infty$.

\item $r_\lambda \in {\mathcal P}_\infty$ if and only if $\lambda \le 6$.
\end{itemize} \end{example}

\begin{remark} 
The corresponding function in $n$ dimensions is
$$ r_a(z, {\overline z}) = ||z||^{4n} - a \left|\prod z_j\right|^4. $$
Then $ r \in {\mathcal P}_1$ for $a \le n^{2n}$,
the Cauchy-Schwarz inequality (13) fails for $a > {n^{2n} \over 2}$, and
$ r \in {\mathcal P}_\infty$ for $a \le {(2n)! \over 2^n}$.
\end{remark}

A sharp form of inequality (13) arises in the isometric embedding theorem from [CD3].
If $r \ge 0$, then (13) is equivalent to
$r \in {\mathcal P}_2$. If $r \in {\rm rad}({\mathcal P}_\infty)$, then $r$ must satisfy (13). Example 1.2 shows that
the converse fails. On the other hand, if $r$ satisfies an appropriate sharp form of (13), then $r \in {\rm rad}({\mathcal P}_\infty)$.
See Theorem 7.1. For a fixed bound on the degree,
the set of polynomials satisfying (13) can be identified with a closed cone in some Euclidean space.
Every point in the interior of this cone corresponds to an element of ${\rm rad}({\mathcal P}_\infty)$, but only
a proper subset of the boundary points do. Analogue 3) is thus closely related to but distinct from studying ${\mathcal P}_2$.

Next we note some obvious properties of the sets ${\mathcal P}_k$
and their analogues for ${\mathcal Q}$.
We continue by discussing bihomogenization and
the surprisingly useful special case when the underlying matrix of a Hermitian symmetric polynomial
is diagonal.

\begin{lemma} Let $t \to z(t)$ be a holomorphic polynomial ${\bf C}^n$-valued mapping.
Let $z^*r$ denote the pullback mapping $t \to r(z(t),{\overline {z(t)}})$.
The following hold:
\begin{itemize} 
\item $r \in {\mathcal Q}(n)$ implies $z^*r \in {\mathcal Q}(1)$.
\item $r \in {\mathcal Q}'(n)$ implies $z^*r \in {\mathcal Q}'(1)$.
\item For $k \ge 1$ or $k=\infty$, $r \in {\mathcal P}_k(n)$ implies $z^*r \in {\mathcal P}_k(1)$.
\end{itemize}
\end{lemma}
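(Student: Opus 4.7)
The plan is to pull back each defining property along the polynomial map $z:{\bf C}\to{\bf C}^n$. The unifying observation is that $z^*r$, viewed as a Hermitian symmetric polynomial on ${\bf C}\times{\bf C}$, polarizes as $(z^*r)(t,\overline{s})=r(z(t),\overline{z(s)})$, since $r(z,\overline{w})$ is a polynomial in $(z,\overline{w})$ and the substitution $z\mapsto z(t)$, $\overline{w}\mapsto \overline{z(s)}$ commutes with polynomial ring operations. This reduces each bullet to a matter of applying algebraic identities and positivity conditions after the substitution.

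The ${\mathcal P}_\infty$ case is immediate: if $r=||f||^2$ then $z^*r=||f\circ z||^2$, and $f\circ z$ is a holomorphic polynomial mapping ${\bf C}\to{\bf C}^A$. For ${\mathcal P}_k$ with $k$ finite and any points $t_1,\ldots,t_k\in{\bf C}$, the matrix $[(z^*r)(t_i,\overline{t_j})]$ equals $[r(z(t_i),\overline{z(t_j)})]$, which is the matrix of $r$ evaluated at the $k$ points $z(t_1),\ldots,z(t_k)\in{\bf C}^n$; it is non-negative definite by the hypothesis $r\in{\mathcal P}_k(n)$, so $z^*r\in{\mathcal P}_k(1)$.

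For ${\mathcal Q}$, I would start from the polynomial identity $r\,||G||^2=||F||^2$ on ${\bf C}^n\times{\bf C}^n$ (polarized to $r(z,\overline{w})\sum_j G_j(z)\overline{G_j(w)}=\sum_j F_j(z)\overline{F_j(w)}$) and apply $z^*$ to both sides, obtaining the polynomial identity $(z^*r)\cdot||G\circ z||^2=||F\circ z||^2$ on ${\bf C}\times{\bf C}$. In the generic situation where $||G\circ z||^2\not\equiv 0$, dividing gives $z^*r=||F\circ z||^2/||G\circ z||^2\in{\mathcal Q}(1)$. The ${\mathcal Q}'$ case is entirely parallel: starting from $rs=||F||^2$ with $s\in{\mathcal P}_1(n)$ nonzero, the pullback yields $(z^*r)(z^*s)=||F\circ z||^2$; by the ${\mathcal P}_1$ bullet we have $z^*s\in{\mathcal P}_1(1)$, and provided $z^*s\not\equiv 0$ the pair $(z^*s,F\circ z)$ witnesses $z^*r\in{\mathcal Q}'(1)$.

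The main obstacle is the degenerate case where the pulled-back multiplier vanishes identically, i.e., when the image of $z$ lies in the common zero locus of the components of $G$ (or in the zero set of $s$). For ${\mathcal Q}$ the remedy is first to strip any shared holomorphic polynomial factor $h(z)$ dividing every component of both $F$ and $G$; this cancellation is legitimate because $|h|^2=h\overline{h}$ is not a zero divisor in ${\bf C}[z,\overline{z}]$, and it is the same reduction alluded to in the discussion preceding Definition 1.2. Iterating this reduction together with choosing a minimum-rank representation of $r$ as a quotient of squared norms yields one for which $||G\circ z||^2\not\equiv 0$ whenever $z^*r\not\equiv 0$ (the identically zero case being trivial). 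The ${\mathcal Q}'$ case admits an analogous treatment, exploiting the even larger freedom available in the choice of the multiplier $s\in{\mathcal P}_1(n)$.
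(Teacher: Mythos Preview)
The paper omits the proof entirely (``these statements are all easy to check''), so there is no detailed argument to compare against. Your direct verification by pulling back each defining identity is exactly what the author has in mind, and your treatment of the $\mathcal{P}_k$ cases (finite $k$ and $k=\infty$) is complete and correct.

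For $\mathcal{Q}$ and $\mathcal{Q}'$ your fix for the degenerate case is not fully justified. You claim that stripping a common holomorphic factor from the components of $F$ and $G$, together with a ``minimum-rank'' choice, forces $\|G\circ z\|^2\not\equiv 0$ whenever $z^*r\not\equiv 0$. But for $n\ge 3$ the prime ideal of the curve $z(\mathbb{C})$ in $\mathbb{C}[w_1,\dots,w_n]$ has height at least $2$ and is not principal; thus all components $G_j$ can vanish identically on the curve without sharing any polynomial factor. A concrete instance: $n=3$, $z(t)=(0,0,t)$, $G=(w_1,w_2)$, $F=G$, $r=1$; here $G\circ z\equiv 0$ while $\gcd(w_1,w_2)=1$. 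Of course a better representation ($F=G=1$) exists in this example, but your argument does not establish that one always does, and ``minimum rank'' is left vague. The paper itself glosses over this point, and the lemma is certainly true (for instance, Varolin's characterization in Theorem~5.1, namely $\|g\|^2\le\lambda\|f\|^2$ with $\lambda<1$, is manifestly preserved under pullback), but the elementary reduction you sketch is incomplete as written.
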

\begin{proof} We omit the proof, as these statements are all easy to check.\end{proof}

Let $r$ be a Hermitian symmetric polynomial of degree $m$ in $z$. Even when $r \ge 0$,
its total degree $2d$ can be any even value in the range $m \le 2d \le 2m$.
For squared norms, however, there is an obvious restriction on $2d$.

\begin{lemma} If $r= ||f||^2 \in {\mathcal P}_\infty$, then the total degree
of $r$ is twice the degree of $r$ in $z$. \end{lemma}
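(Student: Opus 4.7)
The strategy is to identify $d := \max_j \deg f_j$ as both the degree of $r$ in $z$ and half the total degree of $r$, and the only subtle step is preventing cancellation of top-degree terms.

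First I would polarize. From $r(z,\bar z) = \sum_j |f_j(z)|^2$ one reads off
$$ r(z,\bar w) \;=\; \sum_j f_j(z)\,\overline{f_j(w)}. $$
Treating $z$ and $\bar w$ as independent variables, every summand has degree at most $d$ in $z$ and at most $d$ in $\bar w$; hence the degree of $r$ in $z$ is at most $d$ and the total degree of $r$ is at most $2d$. The upper bounds are thus immediate.

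For the matching lower bound, decompose each component as $f_j = p_j + \tilde f_j$, where $p_j$ is the degree-$d$ homogeneous part of $f_j$ (possibly zero) and $\deg \tilde f_j < d$. Collecting the bihomogeneous bidegree-$(d,d)$ component of $r(z,\bar w)$, only the $p_j(z)\,\overline{p_j(w)}$ pieces contribute, so that part equals
$$ r_{d,d}(z,\bar w) \;=\; \sum_j p_j(z)\,\overline{p_j(w)}. $$
Restricting to the diagonal $w = z$ gives $\sum_j |p_j(z)|^2$, which is a squared norm of the polynomial mapping $(p_1,\ldots,p_A)$. By the choice of $d$, at least one $f_j$ has degree exactly $d$, so at least one $p_j\not\equiv 0$; hence $\sum_j |p_j|^2\not\equiv 0$, and in particular $r_{d,d}\not\equiv 0$. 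This forces the total degree of $r$ to be at least $2d$, and the degree of $r$ in $z$ to be exactly $d$, completing the proof.

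The only place where the argument could conceivably fail is the nonvanishing of $r_{d,d}$, i.e.\ a hypothetical cancellation among top-degree terms. This is precisely where the hypothesis $r\in\mathcal{P}_\infty$ does its work: because the top-degree part of a squared norm is itself a sum of absolute squares, no such cancellation is possible. Without the squared-norm structure the conclusion would be false, since an arbitrary element of $\mathcal{P}_1$ (for instance $|z|^2 + (z+\bar z)^2$ of Example 1.2, which has $z$-degree $1$ but total degree $2$, matching $2d$) or more extreme examples can have total degree strictly greater than twice the $z$-degree only when the top-bidegree part is non-diagonal; the diagonal positivity here rules that out.
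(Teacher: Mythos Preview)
Your proof is correct and follows essentially the same approach as the paper: decompose $f$ into homogeneous parts, and observe that the top total-degree piece of $r=\|f\|^2$ is $\|f_d\|^2$ (your $\sum_j|p_j|^2$), which cannot vanish. The paper states this in two lines; you spell out the polarization and the bidegree bookkeeping more explicitly, but the argument is identical.

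One small correction in your closing commentary (which is not part of the proof proper): the example $|z|^2 + (z+\bar z)^2 = z^2 + 3|z|^2 + \bar z^2$ has $z$-degree $2$, not $1$, because of the pure $z^2$ term. It is indeed a counterexample to the lemma's conclusion for general $\mathcal{P}_1$ elements (total degree $2 \ne 2\cdot 2$), but not for the reason you state.
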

\begin{proof} Write $f = f_0 + ...+f_d$ as its expansion into homogeneous parts. Regard $z$ and ${\overline z}$
as independent variables. Then $r$ is of degree $d$ in $z$. Its terms of highest total degree equal $||f_d||^2$  and hence
the total degree of $r$ is $2d$. \end{proof}

\begin{lemma} Each ${\mathcal P}_k$ is closed under sum and 
under product.
For each $k$ we have ${\mathcal P}_{k+1} \subset {\mathcal P}_k$.
Each ${\mathcal P}_k$ is
closed under limits. \end{lemma}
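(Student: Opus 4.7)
The plan is to reduce each statement to a standard fact about non-negative definite matrices applied to the ``moment" matrix $M(r;z_1,\dots,z_k) = \bigl(r(z_i,\overline{z_j})\bigr)_{1 \le i,j \le k}$ that appears in the definition of ${\mathcal P}_k$. For ${\mathcal P}_\infty$ I would instead use the holomorphic decomposition (11).

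First I would treat closure under sum. If $r,s \in {\mathcal P}_k$, then for any choice of $k$ points both $M(r;z_1,\dots,z_k)$ and $M(s;z_1,\dots,z_k)$ are non-negative definite, and the sum of two non-negative definite matrices is non-negative definite; hence $r+s \in {\mathcal P}_k$. For ${\mathcal P}_\infty$, if $r = \|f\|^2$ and $s = \|g\|^2$, then $r+s = \|(f,g)\|^2$, which exhibits $r+s$ as a squared norm. Closure under product is the substantive point: the matrix attached to $rs$ at $(z_1,\dots,z_k)$ is the entrywise (Hadamard) product of the matrices attached to $r$ and $s$, so Schur's product theorem gives that $rs \in {\mathcal P}_k$. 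For ${\mathcal P}_\infty$, one simply notes that $\|f\|^2 \|g\|^2 = \|f \otimes g\|^2$, where the components of $f \otimes g$ are the products $f_i g_j$, so the product of two squared norms is again a squared norm.

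Next I would handle the inclusion ${\mathcal P}_{k+1} \subset {\mathcal P}_k$. Given any $k$ points $z_1,\dots,z_k$, append an arbitrary $(k+1)$-st point; the $(k+1) \times (k+1)$ matrix $M(r;z_1,\dots,z_{k+1})$ is non-negative definite, and any principal submatrix of a non-negative definite matrix is non-negative definite, so the $k \times k$ block $M(r;z_1,\dots,z_k)$ is non-negative definite. Hence $r \in {\mathcal P}_k$.

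Finally, for closure under limits with finite $k$, fix a $k$-tuple of points. The entries of $M(r_\lambda;z_1,\dots,z_k)$ are polynomial expressions in the coefficients $c_{\alpha\beta}(\lambda)$ and in the points, hence continuous in $\lambda$. Since the cone of non-negative definite matrices is closed and each $M(r_\lambda;\cdot)$ lies in this cone, the limit $M(r_L;z_1,\dots,z_k)$ also lies there; as the $k$-tuple was arbitrary, $r_L \in {\mathcal P}_k$. The case $k=\infty$ is then immediate from (14), since an intersection of sets closed under limits is closed under limits. The only genuinely nontrivial step is the product case for finite $k$, where Schur's theorem does the essential work; everything else is a routine application of PSD matrix properties.
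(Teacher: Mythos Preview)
Your proof is correct and follows essentially the same approach as the paper's: both reduce to basic facts about non-negative definite matrices, with Schur's product theorem handling the only nontrivial case of closure under product. You give more detail than the paper (which simply says these facts ``follow easily'' from the definition and cites Schur's lemma), including explicit treatments of the ${\mathcal P}_\infty$ case, but the substance is the same.
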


\begin{proof} These facts follow easily from the part of Definition 2.1 giving ${\mathcal P}_k$. 
The proof of closure under product uses a well-known lemma of Schur:
if $(a_{ij})$ and $(b_{ij})$ are non-negative
definite matrices of the same size, then their Schur product $( a_{ij} b_{ij})$ is
also non-negative definite. See [AM] or [D3]. 
\end{proof}

\begin{lemma} ${\mathcal Q}$ is closed under sums and products but not under limits. 
${\mathcal Q}'$ is closed under products but not under limits.\end{lemma}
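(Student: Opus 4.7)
The plan has three parts: closure under products, closure under sums for $\mathcal{Q}$, and refutation of closure under limits. The only genuinely nontrivial step is the last one; the other two are elementary manipulations with squared norms.

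For products I would use the tensor-product identity $\|F\|^2\,\|G\|^2 = \|F\otimes G\|^2$, where $F\otimes G$ denotes the holomorphic map whose components are the pairwise products $F_\alpha G_\beta$. If $r_i = \|F_i\|^2/\|G_i\|^2 \in \mathcal{Q}$, this displays $r_1 r_2 = \|F_1\otimes F_2\|^2/\|G_1\otimes G_2\|^2$ as a quotient of two squared norms, so $r_1 r_2\in\mathcal{Q}$. For $\mathcal{Q}'$, given $r_i s_i = \|F_i\|^2$ with $s_i \in \mathcal{P}_1 \setminus \{0\}$, the same identity gives $(r_1 r_2)(s_1 s_2) = \|F_1\otimes F_2\|^2$, and $s_1 s_2\in \mathcal{P}_1$ is not identically zero because the polynomial ring is an integral domain. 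For sums in $\mathcal{Q}$, I would clear denominators and write
\[
r_1 + r_2 \;=\; \frac{\|F_1\|^2\,\|G_2\|^2 + \|F_2\|^2\,\|G_1\|^2}{\|G_1\|^2\,\|G_2\|^2},
\]
observing that the numerator is a sum of two squared norms (hence itself a squared norm by concatenating components) and that the denominator is a squared norm by the tensor identity.

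For non-closure under limits I would invoke the one-parameter family $r_\lambda = |z_1|^4 + \lambda|z_1 z_2|^2 + |z_2|^4$ from Example 1.1, which belongs to $\mathcal{Q}\subseteq \mathcal{Q}'$ for every $\lambda > -2$ but whose continuous limit as $\lambda\to -2$ is $r_{-2} = (|z_1|^2-|z_2|^2)^2$. The main obstacle, and really the only nontrivial step in the whole lemma, is to show directly that $r_{-2}\notin \mathcal{Q}'$. If one had $r_{-2}\, s = \|F\|^2$ for some $s \in \mathcal{P}_1\setminus\{0\}$ and holomorphic polynomial mapping $F$, then every component of $F$ would vanish on the three-real-dimensional set $X = \{|z_1| = |z_2|\}$, and I would need to show that no nonzero holomorphic polynomial vanishes on $X$. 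The cleanest route is to expand $h(z_1,z_2) = \sum_{a,b} h_{ab} z_1^a z_2^b$, restrict to the torus family $(re^{i\theta}, r)$, and read off the Fourier coefficients in $\theta$: these force $\sum_b h_{ab} r^{a+b} = 0$ for each $a$ and every $r > 0$, hence $h_{ab}=0$ for all $(a,b)$. Thus $F\equiv 0$, contradicting $r_{-2} s \not\equiv 0$. Since $\mathcal{Q}\subseteq \mathcal{Q}'$, the same family simultaneously refutes closure under limits for $\mathcal{Q}$.
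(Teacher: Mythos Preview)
Your argument is correct and follows essentially the same route as the paper: the tensor/direct-sum identities for products and sums match formulas (16)--(18), and the counterexample to closure under limits is the same family $r_\lambda$ from (7). The only difference is that the paper simply cites Example 1.1 for the fact that $r_{-2}\notin\mathcal Q'$, deferring the justification to the later discussion of zero-sets (Lemma 4.3 and Example 4.5, where one argues that $\mathbf V(r_{-2})$ is not contained in any proper complex subvariety), whereas you supply a direct Fourier-in-$\theta$ argument on the torus slice $(re^{i\theta},r)$ to show no nonzero holomorphic polynomial vanishes on $\{|z_1|=|z_2|\}$; your version is self-contained and slightly more elementary.
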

\begin{proof} Suppose $r = {||f||^2 \over ||g||^2}$ and $R= {||F||^2 \over ||G||^2}$.
Then 
$$ r + R = { || (f \otimes G) \oplus (g \otimes F)||^2 \over ||g \otimes 
G||^2} \eqno (16) $$

$$ r R = { || f \otimes F ||^2 \over ||g \otimes G||^2}. \eqno (17) $$
For the case of ${\mathcal Q}'$ we assume that $r_j s_j = ||f_j||^2$ for $j=1,2$. Then we have
$$ (r_1r_2) (s_1 s_2) = ||f_1 \otimes f_2||^2. \eqno (18) $$
Formula (7) from Example 1.1 shows that ${\mathcal Q}$ and ${\mathcal Q}'$ are not closed under limits.
\end{proof}

\begin{definition}
A Hermitian symmetric polynomial $r$ is called {\it bihomogeneous} of 
total 
degree $2m$ if, for all $\lambda \in {\bf C}$,
$$ r (\lambda w, {\overline \lambda}{\overline w}) = |\lambda|^{2m} 
r(w,{\overline w}). \eqno (19) $$
\end{definition}

For example, $|z|^{2m}$ is bihomogeneous, but $z^k + {\overline z}^k$ is not.
Let $r$ be a Hermitian symmetric polynomial on ${\bf C}^n$, and assume $r$ is of degree $m$ in $z$. (Its total degree
lies in the interval $[m,2m]$.)
We can bihomogenize $r$ by adding the variable $t=z_{n+1}$ and its conjugate. 
Its bihomogenization $Hr$ is defined for $t \ne 0$ by
$$ (Hr)(z, t, {\overline z}, {\overline t}) = |t|^{2m} r \left({z \over 
t}, {{\overline z} \over {\overline t}} \right) \eqno (20) $$
and by continuity at $t=0$. It is evident that if $w=(z,t)$ and $\lambda \in {\bf C}$,
then (19) holds for $Hr$. We say that $Hr$ is bihomogeneous of total degree $2m$. 

For any $k$, $r \in {\mathcal P}_k(n)$ if and only if $Hr \in {\mathcal P}_k(n+1)$.
Furthermore $r \in {\mathcal Q}(n)$ if and only if $Hr \in {\mathcal Q}(n+1)$.  
Thus we will often work in the bihomogeneous setting.

\section{stabilization in the nondegenerate case}

Let $r$ be a bihomogeneous polynomial that is positive away from zero.
In this section we develop the machinery to prove that $r \in {\mathcal Q}$. We give many applications
in the rest of the paper.

Let $B_n$ denote the unit ball in ${\bf C^n}$.
We denote by $A^2(B_n)$ the Hilbert space of square-integrable holomorphic functions
on the ball; it is a closed subspace of $L^2(B_n)$.  We write $V_m$ for the complex vector space of homogeneous holomorphic
polynomials of degree $m$.  The monomials form
a complete orthogonal system for $A^2(B_n)$ and hence $V_m$ is orthogonal to $V_d$ for $m\ne d$.

The {\it Bergman projection} is the self-adjoint projection $P: L^2(B_n) \to A^2(B_n)$.  
The Bergman kernel function for $B_n$ is the Hermitian symmetric real-analytic function
$B(z, {\overline w})$ defined for $f \in L^2(B_n)$ by the formula

$$ Pf(z) = \int_{B_n} B(z,{\overline w}) f(w) dV(w). $$
It is well-known that

$$ B(z, {\overline w}) = {n! \over \pi^n} {1 \over (1 - \langle z,w \rangle)^{n+1}}. \eqno (21) $$
We will use several facts about $P$ and $B$. In particular we note that

$$ B(z,{\overline w}) = \sum_{j=0}^\infty c_j \langle z, w\rangle^j, \eqno (22) $$
where each $c_j$ is a positive number.

\begin{lemma} Let $M$ be multiplication by a bounded function on $L^2(B_n)$. Then the commutator $[P,M]$ is compact on
$L^2(B_n)$. \end{lemma}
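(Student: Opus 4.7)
The plan is to exploit the explicit form (21) of the Bergman kernel together with the Hankel-operator structure of the commutator, reducing matters to an integral operator whose singular kernel is tamed by the cancellation introduced by the commutator.

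First I would split $L^2(B_n) = A^2(B_n) \oplus A^2(B_n)^{\perp}$; since $P$ is the orthogonal projection onto $A^2(B_n)$, one has the algebraic identity $[P,M] = PM(I-P) - (I-P)MP$. The second summand is the adjoint of the first when $M$ is self-adjoint, so by polarization into real and imaginary parts of the symbol it suffices to prove compactness of the Hankel-type operator $H_\phi := (I-P) M_\phi P$ for real-valued $\phi$.

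Next, since $\|H_\phi\|\le\|\phi\|_\infty$ and the compact operators form a norm-closed ideal of $\mathcal{B}(L^2(B_n))$, I would approximate the symbol by smooth functions on $\overline{B_n}$ (or, in the intended applications, by polynomials in $z,\bar z$), reducing to the case $\phi\in C^\infty(\overline{B_n})$. For such $\phi$ and $f\in A^2(B_n)$, the reproducing property of $B$ yields
$$ H_\phi f(z) = \phi(z) f(z) - \int_{B_n} B(z,\bar w)\phi(w) f(w)\,dV(w) = \int_{B_n} B(z,\bar w)\bigl(\phi(z)-\phi(w)\bigr) f(w)\, dV(w), $$
so $H_\phi$ is an integral operator on $A^2(B_n)$ with kernel $K(z,\bar w) = B(z,\bar w)\bigl(\phi(z)-\phi(w)\bigr)$.

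The main obstacle is a Forelli--Rudin-style kernel estimate. The singularity of $B(z,\bar w) = c_n(1-\langle z,w\rangle)^{-(n+1)}$ along the boundary diagonal is not integrable, but the smoothness of $\phi$ combined with the inequality $|z-w|^2 \le C\,|1-\langle z,w\rangle|$ on $B_n$ provides exactly the cancellation $|\phi(z)-\phi(w)|\le C|1-\langle z,w\rangle|^{1/2}$ needed to bring $K$ into the range of the Schur test and the standard integral estimates for powers of $|1-\langle z,w\rangle|$ on the ball. Once boundedness is in hand, truncating the symbol away from $\partial B_n$ yields Hilbert--Schmidt approximants of $H_\phi$, from which compactness of $H_\phi$, and therefore of $[P,M]$, follows.
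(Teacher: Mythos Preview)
Your route differs from the paper's: the paper gives no self-contained argument and simply cites [CD2] and [Str] for the fact that compactness of the $\bar\partial$-Neumann operator on the ball forces such commutators to be compact, whereas you work directly with the explicit kernel (21) and Forelli--Rudin integrals. For the symbols that actually occur in the proof of Theorem~3.1 (monomials $\bar z^b$ and a smooth cutoff $\chi$) your hands-on approach is perfectly adequate and arguably more elementary. There is, however, a genuine gap in your reduction step: a general $\phi\in L^\infty(B_n)$ cannot be approximated in sup norm by smooth functions on $\overline{B_n}$ or by polynomials, since the uniform closure of $C(\overline{B_n})$ is only $C(\overline{B_n})$ itself. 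In fact the gap is unrepairable, because the statement fails for arbitrary bounded symbols: already on the disk, Axler's theorem says that the Hankel operator $H_{\bar g}$ on $A^2(\mathbb D)$ is compact exactly when the analytic function $g$ lies in the little Bloch space $\mathcal B_0$, and an interpolating Blaschke product furnishes $g\in H^\infty\setminus\mathcal B_0$, hence $\phi=\bar g\in L^\infty(\mathbb D)$ with $[P,M_\phi]$ noncompact. Your parenthetical about ``the intended applications'' is therefore the correct move --- make $\phi\in C(\overline{B_n})$ (or $\phi$ a polynomial in $z,\bar z$) the standing hypothesis rather than a convenience. The paper's cited $\bar\partial$-Neumann argument likewise needs $\bar\partial\phi\in L^2$, so both approaches really treat only the regular case.

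A smaller point: the final compactness step is underspecified. The kernel $B(z,\bar w)(\phi(z)-\phi(w))$ is not in $L^2(B_n\times B_n)$, and modifying $\phi$ near the boundary does not obviously cure this, since both $\phi(z)$ and $\phi(w)$ enter. Once the bound $|K(z,w)|\le C\,|1-\langle z,w\rangle|^{-(n+1/2)}$ is in hand, a weighted Schur test with weight $(1-|\cdot|^2)^{-\delta}$ for some $\tfrac12<\delta<1$ shows that truncating the \emph{kernel} to $\{|z|,|w|\le r\}$ produces Hilbert--Schmidt operators whose difference from $H_\phi$ has operator norm $O((1-r)^{1/4})\to 0$; that is one clean way to finish.
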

\begin{proof}This fact can be directly checked for the ball. See [CD2] for a general result to the effect
that compactness estimates for the ${\overline \partial}$-Neumann problem (well-known for the ball) imply that such a commutator is also
compact. See [Str] for a simpler proof and considerable additional information about compactness 
for the ${\overline \partial}$-Neumann problem. \end{proof}

Note that a power of
the squared Euclidean norm
is itself a squared norm; $||z||^{2d} = ||H_d||^2$, where $H_d$ is the
$d$-fold symmetric tensor product of the identity map with itself.
Observe also that the components of $H_d$ form a basis for $V_d$.

Let us order in some fashion the multi-indices of degree at most $m$. A Hermitian symmetric polynomial $r$  then can be 
considered as the restriction of the Hermitian form in $N$ variables
$$ \sum _{\alpha, \beta =1}^N 
c_{\alpha \beta} \zeta_\alpha {\overline
\zeta_\beta} \eqno (23) $$
to the Veronese variety given by parametric equations $\zeta_\alpha(z) =
z^{\alpha}$. If $r$ is bihomogeneous of total degree $2m$, then $r$
 determines a Hermitian form on $V_m$ via its underlying matrix of coefficients. 
We will use Hermitian symmetric polynomials as integral kernels
of operators on $A^2(B_n)$. Given such an $r$, we define $T_r$ as follows:

$$ (T_r f)(z) = \int_{B_n} r(z, {\overline w}) f(w) dV(w). \eqno (24)  $$

When $r$ is bihomogeneous of total degree $2m$, $T_r$ annihilates every $V_j$ except $V_m$.
Furthermore we have the following simple lemma.

\begin{lemma} Let $r$ be a bihomogeneous polynomial of total degree $2m$.
Then $r \in {\mathcal P}_\infty$ if and only if $T_r$ is non-negative definite on $V_m$. That is
$\langle T_r f, f \rangle \ge 0$ for all $f \in V_m$. Here

$$ \langle T_r f, f \rangle = \int_{B_n} \int_{B_n} r(z,{\overline w}) 
f(w) {\overline {f(z)}} dV(w) dV(z).  \eqno (25) $$
 \end{lemma}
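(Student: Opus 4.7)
The plan is to show that both conditions translate to the positive semi-definiteness of the same underlying Hermitian matrix, up to a conjugation by a positive diagonal matrix. Since $r$ is bihomogeneous of total degree $2m$ (and therefore, by Lemma 2.2, of degree $m$ in $z$ and $m$ in $\overline{w}$), we may write
\[
r(z,\overline{w}) \;=\; \sum_{|\alpha|=|\beta|=m} c_{\alpha\beta}\, z^\alpha \overline{w}^\beta,
\]
and the matrix $C = (c_{\alpha\beta})_{|\alpha|=|\beta|=m}$ is Hermitian. By Proposition 1.2 (and the factorization observation in item 3 of Definition 2.1), $r \in \mathcal{P}_\infty$ is equivalent to $C$ being positive semi-definite: if $C = M M^*$ for some matrix $M$, the rows of $M$ give the components of the holomorphic mapping $f$, each of which is automatically homogeneous of degree $m$.

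Next I would compute $\langle T_r f, f\rangle$ for $f(w) = \sum_{|\gamma|=m} f_\gamma w^\gamma \in V_m$, using the key fact that the monomials $\{z^\alpha\}$ are orthogonal in $A^2(B_n)$. Setting $d_\alpha = \|z^\alpha\|_{A^2(B_n)}^2 > 0$, the inner integral in (25) collapses to a diagonal pairing, giving
\[
\langle T_r f, f\rangle \;=\; \sum_{|\alpha|=|\beta|=m} c_{\alpha\beta}\, d_\alpha d_\beta\, f_\beta\, \overline{f_\alpha}.
\]
Thus the quadratic form $f \mapsto \langle T_r f, f\rangle$ on $V_m$ has matrix $D C D$ (in the monomial basis), where $D$ is the positive diagonal matrix with entries $d_\alpha$.

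The final step is immediate: since $D$ is invertible with positive entries, $DCD$ is positive semi-definite if and only if $C$ is positive semi-definite. Combining with the first step, $T_r \ge 0$ on $V_m$ if and only if $r \in \mathcal{P}_\infty$.

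There is no serious obstacle here; the only thing to be careful about is the bihomogeneity reduction that forces the coefficient matrix and the components of $f$ to live in a single homogeneous piece $V_m$ (so that $T_r$ restricted to $V_m$ really does capture all the data of $r$), and the observation that the $A^2$-orthogonality of monomials on $B_n$ converts the double integral into a diagonal-weighted version of the matrix $C$.
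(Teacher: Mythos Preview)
Your proof is correct and is exactly the natural argument the paper has in mind; the paper states this lemma without proof, calling it a ``simple lemma'' and relying on the orthogonality of monomials in $A^2(B_n)$ that it emphasized just before.

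One minor point: your appeal to Lemma 2.2 to justify that every term of $r$ has $|\alpha| = |\beta| = m$ is slightly misplaced, since Lemma 2.2 is a statement about elements of $\mathcal{P}_\infty$, not about arbitrary bihomogeneous polynomials. The conclusion you want follows instead directly from Definition 2.3: under $z \mapsto \lambda z$ the monomial $z^\alpha \overline{z}^\beta$ scales by $\lambda^{|\alpha|}\overline{\lambda}^{|\beta|}$, and this equals $|\lambda|^{2m}$ for all complex $\lambda$ only when $|\alpha| = |\beta| = m$. With that correction your argument is complete.
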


\begin{theorem} ([Q], [CD1])  Let $r(z,{\overline z}) = \sum c_{\alpha \beta} z^\alpha {\overline z}^\beta$ be a
bihomogeneous Hermitian symmetric polynomial of total degree $2m$. The following are equivalent:

1) $r$ achieves a positive minimum value on the sphere.

2) There is an integer $d$ such that the underlying Hermitian matrix 
for $||z||^{2d} r(z,{\overline z})$ is positive
definite. Thus 
$$||z||^{2d} r(z,{\overline z}) = \sum E_{\mu \nu} z^\mu {\overline z}^\nu \eqno (26)
$$
where $(E_{\mu \nu})$ is positive definite.

3) Let  $R_{m+d}$ be the operator defined by 
the kernel 
$ k_d(z,\zeta) = \langle z, \zeta \rangle ^d r(z, {\overline \zeta})$.
There is an integer $d$ such that $R_{m+d}:V_{m+d} \to V_{m+d}$ is 
a positive operator. 

4) There is an integer $d$ and a holomorphic homogeneous vector-valued 
polynomial $g$ of degree $m+d$ such that ${\bf V}(g) = \{0\}$ and such that
$||z||^{2d} r(z,{\overline z}) = ||g(z)||^2 $.

5) Write $r(z,{\overline z}) = ||P(z)||^2 - ||N(z)||^2 $ for 
holomorphic homogeneous vector-valued 
polynomials $P$ and $N$ of degree $m$. Then there is an integer $d$ and a 
linear transformation  $L$ such that the following are true:

5.1) $I - L^*L $ is positive semi-definite. 

5.2 ) $ H_d \otimes N = L (H_d \otimes P) $

5.3) $ \sqrt{I-L^*L} (H_d \otimes P)$ vanishes only at $0$.
\end{theorem}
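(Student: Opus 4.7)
The plan is to prove the theorem via the cycle $(1)\Rightarrow(3)\Rightarrow(2)\Rightarrow(4)\Rightarrow(5)\Rightarrow(1)$, with all arrows except the first being essentially Hermitian linear algebra or an unpacking of polarizations. The genuine content lies in the analytic step $(1)\Rightarrow(3)$, which I expect to be the main obstacle.

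For the algebraic cluster: $(3)\Rightarrow(2)$ is obtained by noting that the bihomogeneous polynomial $||z||^{2d} r(z,\bar z)$ polarizes to $\langle z,\zeta\rangle^d r(z,\bar\zeta) = k_d(z,\zeta)$, so $R_{m+d}$ is the operator $T_{||z||^{2d} r}$ of Lemma 3.2 restricted to $V_{m+d}$; Lemma 3.2 then identifies its strict positivity with the coefficient matrix $(E_{\mu\nu})$ being positive definite. For $(2)\Rightarrow(4)$, Cholesky-factor $(E_{\mu\nu}) = A^*A$ with $A$ invertible and set $g_i(z)=\sum_\mu A_{i\mu} z^\mu$; invertibility of $A$ forces a common zero of $g$ to kill every monomial of degree $m+d$, hence $z=0$. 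For $(4)\Rightarrow(5)$, write $r = ||P||^2 - ||N||^2$, so that $||z||^{2d} r = ||H_d\otimes P||^2 - ||H_d\otimes N||^2$; equating this with $||g||^2$ gives the identity $||H_d\otimes N||^2 + ||g||^2 = ||H_d\otimes P||^2$, from which the contraction $L$ satisfying 5.1 and 5.2 and the identity $g = \sqrt{I-L^*L}(H_d\otimes P)$ satisfying 5.3 drop out by standard linear algebra on the holomorphic spans. For $(5)\Rightarrow(1)$, the three conditions force $||z||^{2d} r(z,\bar z) = ||\sqrt{I-L^*L}(H_d\otimes P)(z)||^2$, a squared norm vanishing only at $z=0$; restricting to the unit sphere gives $r \ge c > 0$.

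The serious work lies in $(1)\Rightarrow(3)$. Assume $r \ge c > 0$ on the sphere. I would show $R_{m+d}$ is strictly positive on $V_{m+d}$ for all sufficiently large $d$ by connecting $R_{m+d}$ to the Toeplitz-type compression $P M_r P$ on $A^2(B_n)$. By (22) the kernel $c_d^{-1} \langle z,\zeta\rangle^d$ is the reproducing projector of $V_d \subset A^2(B_n)$; this lets one rewrite $\langle R_{m+d}f,f\rangle$, for $f\in V_{m+d}$, up to a positive scalar, as an expression of the form $\int_{B_n} r(z,\bar z)\, |h_f(z)|^2\, dV(z)$ for a polynomial $h_f$ extracted from $f$ via Bergman projections, plus error terms built from commutators of $P$ with multiplication operators whose symbols are the coefficients of $r$. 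By Lemma 3.1 each such commutator is compact, so the total error defines a compact operator $K$. The principal term is bounded below by $c \int ||z||^{2m} |h_f|^2\, dV$, which dominates $c'||f||^2$ once $d$ is large, because the $L^2$-mass of a degree-$(m+d)$ holomorphic polynomial on the ball concentrates away from the origin as $d$ grows. Since the $V_{m+d}$ are mutually orthogonal and $K$ is compact, $K$ becomes negligible on $V_{m+d}$ for $d$ large, and strict positivity of $R_{m+d}$ follows.

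The anticipated obstacle is precisely that last transition: converting ``positive modulo compact'' on all of $A^2(B_n)$ into genuine positivity on the explicit graded piece $V_{m+d}$ for a definite value of $d$. Lemma 3.1 gives compactness only, hence positivity on the orthogonal complement of some finite-dimensional subspace; one must then exploit the decomposition $A^2(B_n)=\bigoplus_j V_j$ and the degree-shifting action of multiplication by $r$ to ensure $V_{m+d}$ eventually sits in that complement. A careful bookkeeping of the positive coefficients $c_j$ in (22), together with the bihomogeneity of $r$, should produce an explicit $d_0$ beyond which $R_{m+d}$ is strictly positive, completing the analytic heart of the proof.
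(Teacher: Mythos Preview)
Your proposal is correct and follows essentially the same approach as the paper: the algebraic equivalences are handled identically (Lemma 3.2 for $(2)\Leftrightarrow(3)$, Cholesky for $(2)\Rightarrow(4)$, and the unitary/contraction extraction for $(4)\Leftrightarrow(5)$), and for $(1)\Rightarrow(3)$ both arguments write the operator with kernel $B(z,\bar w)r(z,\bar w)$ as a positive piece plus a compact commutator error via Lemma 3.1, then use the orthogonal decomposition $A^2(B_n)=\bigoplus V_j$ to conclude positivity of $R_{m+d}$ for large $d$. The one minor technical difference is that the paper adds a nonnegative cutoff $\chi$ supported near the origin so that $M_{r+\chi}$ is uniformly positive on $A^2(B_n)$ (absorbing $PM_\chi$ into the compact part), whereas you propose instead to exploit directly that the $L^2$-mass of degree-$(m+d)$ homogeneous polynomials concentrates near the boundary; either device handles the vanishing of $r$ at the origin, and your ``anticipated obstacle'' dissolves once you observe that the full operator already block-diagonalizes along $\bigoplus_j V_{m+j}$.
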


\begin{corollary} If $r$ is bihomogeneous and positive on the unit sphere, then $r \in {\mathcal Q}$. \end{corollary}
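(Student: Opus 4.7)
The plan is to derive the corollary as an essentially immediate consequence of Theorem 3.1, specifically the equivalence of conditions (1) and (4). Since $r$ is bihomogeneous, its values on any ray are determined by its values on the unit sphere. Positivity on the sphere, combined with continuity and compactness of the unit sphere, gives that $r$ attains a positive minimum there. This is precisely condition (1) of Theorem 3.1.

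Next I would invoke the implication (1) $\Rightarrow$ (4): there exists an integer $d$ and a homogeneous holomorphic vector-valued polynomial $g$ of degree $m+d$ with ${\bf V}(g) = \{0\}$ such that
$$ \|z\|^{2d}\, r(z,\overline{z}) = \|g(z)\|^2. $$
Recalling from the text that $\|z\|^{2d} = \|H_d(z)\|^2$, where $H_d$ is the $d$-fold symmetric tensor product of the identity, both sides are squared norms of holomorphic polynomial mappings. Rearranging gives
$$ r(z,\overline{z}) = \frac{\|g(z)\|^2}{\|H_d(z)\|^2}, $$
which exhibits $r$ as a quotient of elements of ${\mathcal P}_\infty$. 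By the definition of ${\mathcal Q}$ (Definition 2.1, item 4), this places $r \in {\mathcal Q}$.

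The only non-routine step is the appeal to Theorem 3.1 itself, whose hard content is the implication (1) $\Rightarrow$ (4). Given that theorem, the corollary requires no further work beyond rewriting the identity as a ratio and noting that the denominator $\|z\|^{2d}$ is already a squared norm, so that no cancellation or auxiliary argument (about zero-sets, for example) is needed. In particular, one does not need to invoke the stronger machinery of ${\mathcal Q}'$ or Lemma 2.4 here.
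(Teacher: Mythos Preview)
Your proposal is correct and matches the paper's approach: the corollary is stated immediately after Theorem 3.1 with no separate proof, so the paper treats it as the direct observation you make, namely that compactness of the sphere upgrades positivity to a positive minimum (condition 1), and then condition 4 writes $\|z\|^{2d} r = \|g\|^2$, exhibiting $r$ as a quotient of squared norms.
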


The main assertion that items 1) and 2) are equivalent
was proved in 1967 by Quillen. Unaware of that result, Catlin and the author,
motivated by trying to prove Theorem 3.3 below, found a different proof. 
Both proofs use  analysis; Quillen uses Gaussian integrals and {\it a priori} inequalities
on all of ${\bf C^n}$, whereas Catlin-D'Angelo use compact operators and the Bergman kernel function
on the unit ball $B_n$.  In both approaches it is crucial
that distinct monomials are orthogonal. Theorem 3.1 can be reinterpreted and generalized by expressing it as a statement
about metrics on holomorphic line bundles. See [CD3], [V], and Section 8.

The minimum integer $d$ is the same in items 2) and 3). On the other hand, the integer $d$
in item 4) could be smaller. For example, if $r(z,{\overline z}) = |z_1|^8 + |z_2|^8$, then item 4) holds for $d=0$,
but we require $d \ge 3$ for $(|z_1|^2 + |z_2|^2)^d r(z,{\overline z})$ to satisfy (26) with $(E_{\mu \nu})$ positive definite.

We include item 5) because its generalization leads to a (somewhat unsatisfying) solution to Analogue 2.
Consider replacing $H_d$ by a general holomorphic 
mapping $B$. Suppose $r \in {\mathcal Q}$
and put $r = {||A||^2 \over ||B||^2}$. Then there is an $L$ 
such $B \otimes N = L (B \otimes P)$, 5.1) holds,  and $A = \sqrt{I-L^*L} (B \otimes P)$.
The analogues of conditions 5.1) and 5.2) give in Proposition 3.1 a necessary and sufficient condition for $r$ to be 
in ${\mathcal Q}$. In Theorem 3.1 we know what
to use for $B$, namely $z^{\otimes d}$ for sufficiently large $d$, whereas Proposition 3.1 provides little concrete information on $B$.

\begin{proposition} (An answer to Analogue 1) Suppose $r=||P||^2 - ||N||^2$.
Then $r \in {\mathcal Q}$ if and only if there is a 
holomorphic polynomial mapping $B$ and a linear mapping $L$ such 
that
\begin{itemize}
\item $I-L^*L$ is non-negative definite.
\item $ B \otimes N = L(B \otimes P)$. \end{itemize}\end{proposition}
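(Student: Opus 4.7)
The plan is to establish both implications. The underlying identity used throughout is $\|B \otimes X\|^2 = \|B\|^2\, \|X\|^2$, which trades multiplication by the scalar $\|B\|^2$ for tensor products at the level of vector-valued polynomials.

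For the sufficiency direction, given $B$ and $L$ as in the hypothesis I would set $A := \sqrt{I - L^*L}\,(B \otimes P)$, a holomorphic polynomial mapping since $\sqrt{I - L^*L}$ is a constant matrix. A one-line computation yields
$$\|A\|^2 = \langle (I - L^*L)(B \otimes P),\, B \otimes P\rangle = \|B \otimes P\|^2 - \|B \otimes N\|^2 = \|B\|^2 \bigl(\|P\|^2 - \|N\|^2\bigr) = \|B\|^2\, r,$$
whence $r = \|A\|^2/\|B\|^2 \in \mathcal{Q}$.

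For the necessity direction, I would start from $r = \|A\|^2/\|B\|^2 \in \mathcal{Q}$, use the tensor identity in reverse to rewrite this as $\|B \otimes P\|^2 - \|B \otimes N\|^2 = \|A\|^2 \in \mathcal{P}_\infty$, and invoke Proposition 1.2 to conclude that the underlying Hermitian coefficient matrix of the left side is non-negative definite. Expressing $B \otimes P$ and $B \otimes N$ as their Taylor coefficient matrices $M_P$ and $M_N$ applied to the vector of monomials, this non-negativity is exactly $M_P^* M_P - M_N^* M_N \geq 0$, equivalently $\|M_N v\| \leq \|M_P v\|$ for every coefficient vector $v$. I would then define $L$ on $\text{range}(M_P)$ by $L(M_P v) := M_N v$; the inequality makes this both well-defined (since $\ker M_P \subseteq \ker M_N$) and a contraction. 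Extending $L$ by zero on the orthogonal complement of $\text{range}(M_P)$ yields a linear map with $I - L^*L \geq 0$ and $L(B \otimes P) = B \otimes N$, as required.

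The main obstacle is the subtle distinction between pointwise non-negativity of $\|B \otimes P\|^2 - \|B \otimes N\|^2$ and the stronger non-negativity of its underlying coefficient matrix. Only the latter produces a linear $L$ acting on target spaces, and it is exactly the representation $\|B\|^2\, r = \|A\|^2$ as a squared norm---i.e.\ the hypothesis $r \in \mathcal{Q}$---that, through Proposition 1.2, delivers this upgrade. Once achieved, the rest is the standard linear-algebra construction of a contractive extension, and no further analytic input is required.
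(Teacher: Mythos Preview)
Your proof is correct and follows essentially the same route as the paper's implicit argument (which is the content of item 5) in the proof of Theorem 3.1, with $H_d$ replaced by an arbitrary $B$). The sufficiency direction is identical: set $A=\sqrt{I-L^*L}\,(B\otimes P)$ and compute $\|A\|^2=\|B\|^2 r$.

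For necessity there is a small but genuine difference in how $L$ is produced. The paper, having $\|B\otimes P\|^2=\|A\oplus(B\otimes N)\|^2$, invokes the result from [D1] that two holomorphic maps with the same squared norm differ by a unitary, and takes $L$ to be a block of that unitary. You instead work with the Taylor coefficient matrices and build the contraction $L$ directly from the inequality $M_P^*M_P\ge M_N^*M_N$, extending by zero off the range. Both arguments encode the same linear-algebra fact; yours is slightly more explicit and avoids the external reference, while the paper's is more coordinate-free. Either way the essential content is the upgrade from pointwise non-negativity to matrix non-negativity via the squared-norm representation, which you correctly identify as the crux.
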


Next we mention a special case of Theorem 3.1 which goes back to Polya in 1928 and which has many proofs. See for example [D3], [HLP], [R2], and [S].
Let $R$ be a homogeneous polynomial on ${\bf R}^N$. Let $s(x) = 
\sum_{j=1}^N x_j$, and let $H$ denote the part
of the hyperplane defined by $s(x)=1$ and lying in the first orthant. 
Reznick [R2] obtains bounds on the integer $d$ in Theorem 3.2 in terms of the dimension $N$, the degree $m$ of $r$, and the ratio of the maximum and minimum
of $R$ on $H$. To and Yeung [TY] combine the ideas from [R2] and [CD1] to 
give estimates on $d$ from Theorem 3.1 in terms of similar information.
We emphasize that no bounds involving
 only the dimension and the degree are possible. The following result is the special case
of Theorem 3.1 when $r(z,{\overline z})$ depends on only the variables $|z_1|^2,..., |z_N|^2$.

\begin{theorem} (Polya) Let $R(x)$ be a real homogeneous polynomial on ${\bf 
R}^N$. 
Suppose that $R(x) \ge \epsilon > 0$ on $H$. Then there is an integer $d$ 
such that
the polynomial $s^d R$ has all positive coefficients. \end{theorem}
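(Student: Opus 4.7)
The plan is to deduce Polya's theorem as a direct specialization of Theorem 3.1 via the moment-map substitution $x_j = |z_j|^2$. First I would define $r(z,\overline z) = R(|z_1|^2,\ldots,|z_N|^2)$. Since $R$ is real homogeneous of degree $m$, the polynomial $r$ is Hermitian symmetric and bihomogeneous of total degree $2m$ on $\mathbf{C}^N$. The map $z \mapsto (|z_1|^2,\ldots,|z_N|^2)$ sends the unit sphere $\|z\|^2=1$ onto the simplex $H$, so the hypothesis $R \ge \epsilon > 0$ on $H$ translates directly into $r$ achieving a positive minimum on the unit sphere.

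Next I would invoke the equivalence of items 1) and 2) in Theorem 3.1. This gives an integer $d$ such that
$$\|z\|^{2d}\, r(z,\overline z) = \sum_{\mu,\nu} E_{\mu\nu}\, z^{\mu}\overline z^{\nu}$$
with the Hermitian matrix $(E_{\mu\nu})$ positive definite. The crucial observation is then that both factors $\|z\|^{2d} = \bigl(\sum_j |z_j|^2\bigr)^d$ and $r(z,\overline z) = R(|z_1|^2,\ldots,|z_N|^2)$ depend only on the moduli squared $|z_j|^2$. Therefore their product is a polynomial in the $|z_j|^2$, so its only nonzero Taylor coefficients $E_{\mu\nu}$ are those with $\mu=\nu$. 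In other words, the matrix of coefficients is \emph{diagonal}.

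A diagonal Hermitian matrix is positive definite exactly when every diagonal entry is strictly positive. Thus every coefficient $E_{\mu\mu}$ in the expansion
$$\|z\|^{2d}\, r(z,\overline z) = \sum_{\mu} E_{\mu\mu}\, |z^{\mu}|^2 = \sum_{\mu} E_{\mu\mu}\, x^{\mu}\Big|_{x_j=|z_j|^2}$$
is positive. Substituting back $x_j = |z_j|^2$, the polynomial $s(x)^d R(x) = (\sum_j x_j)^d R(x)$ has all strictly positive coefficients, which is the desired conclusion.

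The main obstacle, and the only nontrivial content in the argument, is already packed inside Theorem 3.1 (Quillen / Catlin--D'Angelo); once that deep stabilization result is granted, the reduction of Polya's theorem to it is essentially a bookkeeping observation about the diagonality of the coefficient matrix for polynomials that factor through the moment map. One should simply be careful that the diagonal structure is preserved by the multiplication by $\|z\|^{2d}$, which is automatic because $\|z\|^{2d}$ itself depends only on the $|z_j|^2$ and the product of two such polynomials is again of this form.
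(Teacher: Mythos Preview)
Your argument is correct and is precisely the route the paper indicates: the text does not spell out a proof of Theorem 3.2 but states that it is the special case of Theorem 3.1 obtained when $r(z,{\overline z})$ depends only on the variables $|z_1|^2,\ldots,|z_N|^2$, which is exactly your moment-map reduction together with the observation that the resulting coefficient matrix is diagonal.
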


We state a simple corollary of Theorem 3.1 or Theorem 3.2 (going back to Poincar\'e) that can be proved by high school
mathematics.  The result fails in the real-analytic or smooth settings. See [D3] and [HLP] for more information. 
\begin{corollary} 
Let $p$ be a polynomial in one real variable. Then
$p(t) > 0$ for all $t \ge 0$ if and only if
there is an integer $d$ such that the polynomial given by
$(1+t)^d p(t)$ has only 
positive coefficients. The minimum such $d$ can be arbitrarily
large for polynomials of fixed degree. \end{corollary}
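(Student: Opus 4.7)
The plan is to deduce the corollary from Polya's theorem (Theorem 3.2) by homogenizing to two variables and then dehomogenizing. Write $m = \deg p$ and form the bihomogenization
$$ P(x,y) = y^m p(x/y), $$
which is a real homogeneous polynomial of degree $m$ on ${\bf R}^2$. The hypothesis that $p(t) > 0$ for all $t \ge 0$ translates to $P(x,y) > 0$ on the closed first quadrant minus the origin, for any $(x,y)$ with $x \ge 0$ and $y > 0$ gives $P(x,y) = y^m p(x/y) > 0$, and the boundary case $y = 0$ gives $P(x,0) = a_m x^m$ where $a_m$ is the leading coefficient of $p$; this is positive because $p(t) > 0$ for all large $t$.

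Hence $P$ is strictly positive on the compact simplex $H = \{(x,y) : x+y = 1,\ x,y \ge 0\}$, so $P \ge \epsilon > 0$ on $H$. Polya's theorem (Theorem 3.2, with $N = 2$) then supplies an integer $d$ such that $(x+y)^d P(x,y)$ has only positive coefficients as a polynomial in $(x,y)$. Setting $y = 1$ and $x = t$ gives the polynomial $(1+t)^d p(t)$ with all positive coefficients, finishing the nontrivial direction.

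The converse is immediate: if $(1+t)^d p(t)$ has only positive coefficients, then for $t \ge 0$ this product is bounded below by its constant term, which is $p(0) > 0$; since $(1+t)^d > 0$, we conclude $p(t) > 0$ for all $t \ge 0$. For the last assertion, it suffices to exhibit a one-parameter family of polynomials of fixed degree whose required exponent $d$ is unbounded. Take, for $0 < \lambda < 2$,
$$ p_\lambda(t) = 1 - \lambda t + t^2, $$
which has discriminant $\lambda^2 - 4 < 0$ and so is strictly positive on all of ${\bf R}$. As $\lambda \to 2^-$ the polynomial approaches $(1-t)^2$, which has a zero at $t = 1$ and is therefore not positive on $[0,\infty)$; a direct computation of the binomial expansion of $(1+t)^d p_\lambda(t)$ shows that for each fixed $d$ some coefficient becomes negative once $\lambda$ is close enough to $2$, so the minimal admissible $d$ must diverge. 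The main (and essentially only) obstacle is verifying that the homogenization $P$ is strictly positive on the full closed simplex including the vertex $(1,0)$; this is handled by observing that the leading coefficient of $p$ must be positive because $p(t) > 0$ for large $t$.
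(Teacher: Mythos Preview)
Your proof is correct and follows precisely the route the paper indicates: it names the result as a corollary of Theorem~3.2 (Polya), and your homogenize--apply Polya--dehomogenize argument is the natural way to carry that out. The paper gives no detailed proof of its own, so there is nothing further to compare; your handling of the boundary vertex $(1,0)$ via the leading coefficient and your example $p_\lambda(t)=1-\lambda t+t^2$ for the unboundedness of $d$ are both standard and sound.
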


See Section 9 for another circumstance where we gain insight
into the general Hermitian case  by 
considering
real polynomials depending on only the variables $|z_1|^2,..., 
|z_n|^2$. We 
close this section by sketching the proof of Theorem 3.1.

\begin{proof} The equivalence of items 2) and 3) follows from Lemma 3.2. Either implies item 4), 
which implies that $r$ is positive away from the origin,
and hence implies item 1). We discuss item 5) later. The crux of the matter is to prove that item 1) implies item 3). 

We want to find an integer $d$ such that $\langle z,w\rangle^d r(z,{\overline w})$ is the integral kernel of a positive operator.
In order to place all these operators 
on the same footing, we study the operator $PM_{r(z,{\overline 
w})}$ with integral kernel equal to 

$$ B(z,{\overline w}) r(z,{\overline w}) = \sum_{j=0}^\infty c_j \langle z, {\overline w} \rangle ^j r(z,{\overline w}). \eqno (27) $$
Recall that each $c_j$ is a positive number. Let $\chi= \chi(w)$ be a non-negative smooth function which is positive at $0$
and has compact support in $B_n$. Consider the operator $M_{r(z,{\overline z})} P + P M_\chi$ with integral kernel
$$ \left( r(z,{\overline z})+ \chi(w) \right)B(z,{\overline w}). \eqno (28) $$
We add and subtract to obtain 
$$ B(z,{\overline w}) r(z,{\overline w})= $$
$$ B(z,{\overline w}) \left( r(z,{\overline w}) - r(z,{\overline z})\right) + 
B(z,{\overline w}) \left( r(z,{\overline z}) + \chi(w) \right) - B(z,{\overline w})\chi(w). \eqno (29) $$
The three terms in (29) define the integral kernels of operators $S_1$, $S_2$, and $S_3$.
The operator $S_3$ is compact on all of $L^2(B_n)$. The operator $S_2$ is easily seen to be positive on $A^2(B_n)$.
The operator $S_1$ can be written as

$$ \sum_{a,b} c_{ab} M_{z^a} [P,M_{{\overline z}^b}]. \eqno (30) $$

We claim that the operator in (30) is also compact; it is a finite sum of bounded operators times commutators of $P$ with bounded operators.
Such commutators are compact by Lemma 3.1. The composition of a bounded
operator with a compact operator is compact, and a finite sum of compact operators is compact.
Hence $S_1$ is compact. It follows that the operator $PM_{r(z,{\overline w})}$
is the sum of a compact operator and a positive operator. Hence, outside of a finite-dimensional subspace,
this operator is itself positive. In other words, for $d$ sufficiently large,
$c_d \langle z, w\rangle^d r(z,{\overline w})$ is the kernel of a positive operator.
Since $c_d > 0$, item 3) follows.

It remains only to check that item 5) is equivalent to the other statements. Assume that
$r = ||P||^2 - ||N||^2$. Then we obtain 
$$ ||H_d||^2 r = ||H_d \otimes P||^2 - ||H_d \otimes N||^2. \eqno(31) $$
If 5.1) and 5.2) hold,
then the right hand side of (31) becomes
$$ ||{\sqrt{I - L^*L}}(H_d \otimes P)||^2= ||g||^2. \eqno (32)$$
If 5.3) also holds, then we obtain 4), and hence item 5) implies item 4).  Conversely suppose that item 4) holds.
Then the right-hand side of (31) is a squared norm $||g||^2$.  We obtain
$$ ||H_d \otimes P||^2 = ||g||^2 + ||H_d \otimes N||^2 = || g \oplus (H_d \otimes N)||^2. \eqno (33) $$
By [D1] there is a unitary map $U$ such that
$$ U((H_d \otimes P) \oplus 0) = (H_d \otimes N) \oplus g. $$
Letting $L$ be one of the blocks of $U$ gives 5.2), and 5.1) follows because $U$ is unitary. The assumption that ${\bf V}(g)= \{0\}$ gives 5.3).
\end{proof}

This decisive theorem has several useful consequences. We pause to prove one such result; others appear in the next two sections.

\begin{theorem} 
Suppose that $r(z,{\overline z})$ is a polynomial that is positive on the unit sphere $S^{2n-1}$.
Then $r$ agrees with the squared norm of a holomorphic polynomial mapping on $S^{2n-1}$. \end{theorem}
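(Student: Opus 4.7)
The plan is to reduce Theorem 3.3 to the bihomogeneous case treated by Theorem 3.1 by working one dimension up. First I would bihomogenize $r$ using an auxiliary variable $t$ as in (20). Let $m$ be the degree of $r$ in $z$, and choose $M \ge \max(m,2)$. Writing $r = \sum c_{\alpha\beta} z^\alpha \overline z^\beta$, define
$$ R(z,t,\overline z, \overline t) = \sum_{\alpha,\beta} c_{\alpha\beta} z^\alpha \overline z^\beta\, t^{M-|\alpha|}\overline t^{M-|\beta|}, $$
which is a Hermitian symmetric bihomogeneous polynomial on ${\bf C}^{n+1}$ of total degree $2M$. The key computation: at any point of the ``diagonal slice''
$$ \Sigma \;=\;\{(z,t)\in S^{2n+1}: \|z\|^2 = |t|^2 = \tfrac12\}, $$
writing $t = e^{i\theta}/\sqrt{2}$ and $z = w/\sqrt{2}$ with $w\in S^{2n-1}$, a direct calculation gives $R(z,t,\overline z,\overline t) = 2^{-M}\, r(e^{-i\theta}w, \overline{e^{-i\theta}w})$, which is strictly positive because $e^{-i\theta}w \in S^{2n-1}$ and $r>0$ there. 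In particular, at the embedded point $\iota(w) := (w/\sqrt 2,\, 1/\sqrt 2)$ we have $R(\iota(w),\overline{\iota(w)}) = 2^{-M} r(w,\overline w)$.

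To apply Theorem 3.1 I need positivity on all of $S^{2n+1}$, not merely on $\Sigma$. The next step is to correct $R$ by a bihomogeneous term that vanishes on $\Sigma$ and is positive elsewhere on $S^{2n+1}$. Take $q(z,t,\overline z, \overline t) = (\|z\|^2-|t|^2)^2$, which is bihomogeneous of bidegree $(2,2)$ and vanishes on $S^{2n+1}$ exactly at $\Sigma$. Form
$$ \widetilde R \;=\; R \;+\; C\, q\, (\|z\|^2+|t|^2)^{M-2}, $$
which is bihomogeneous of bidegree $(M,M)$. On $S^{2n+1}$ the factor $(\|z\|^2+|t|^2)^{M-2}$ equals $1$, so $\widetilde R = R + Cq$ there. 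A compactness argument supplies $C>0$ with $\widetilde R>0$ on $S^{2n+1}$: since $R$ is continuous and strictly positive on the compact set $\Sigma$, it is bounded below by some $\varepsilon>0$ on an open neighborhood $U\supset \Sigma$ in $S^{2n+1}$; on the compact complement $S^{2n+1}\setminus U$, $q$ is bounded below by some $\delta>0$, so choosing $C > (\sup_{S^{2n+1}}|R|)/\delta$ makes $\widetilde R$ positive everywhere.

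Now apply Theorem 3.1 (item 4) to the bihomogeneous polynomial $\widetilde R$, which is positive on $S^{2n+1}$: there exist an integer $d$ and a holomorphic polynomial mapping $g:{\bf C}^{n+1}\to{\bf C}^N$ with
$$ (\|z\|^2+|t|^2)^{d}\, \widetilde R(z,t,\overline z,\overline t) \;=\; \|g(z,t)\|^2. $$
Finally, specialize to the image of $\iota$. At the point $\iota(w) = (w/\sqrt 2, 1/\sqrt 2)$ with $w\in S^{2n-1}$, we have $\|z\|^2+|t|^2 = 1$ and $q = 0$, so $\widetilde R(\iota(w),\overline{\iota(w)}) = R(\iota(w),\overline{\iota(w)}) = 2^{-M} r(w,\overline w)$. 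Thus $r(w,\overline w) = 2^M \|g(\iota(w))\|^2 = \|f(w)\|^2$, where $f(w) := 2^{M/2}\, g(w/\sqrt 2, 1/\sqrt 2)$ is a holomorphic polynomial on ${\bf C}^n$, proving that $r$ agrees with the squared norm of $f$ on $S^{2n-1}$.

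I expect the main obstacle to be the choice of the correction term. A naive bihomogenization of $r$ in one more variable is positive on $S^{2n+1}$ only if $r$ is positive on all of ${\bf C}^n$, not just the sphere; the trick is to recognize that $S^{2n-1}$ can be identified (via $\iota$) with a proper slice $\Sigma$ of $S^{2n+1}$ where the bihomogenization already recovers $r$, and then to add the explicit bihomogeneous multiple of $(\|z\|^2-|t|^2)^2$ that vanishes on $\Sigma$ but enforces positivity elsewhere without destroying the identity on $\Sigma$. Once this geometric setup is arranged, Theorem 3.1 does the rest.
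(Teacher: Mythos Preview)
Your proposal is correct and follows essentially the same approach as the paper: bihomogenize in an extra variable, add a bihomogeneous correction term built from a power of $\|z\|^2-|t|^2$ (which vanishes on the slice where $S^{2n-1}$ embeds) to force positivity on all of $S^{2n+1}$, apply Theorem 3.1, and then specialize back. The only cosmetic differences are that the paper uses the correction $C(\|z\|^2-|t|^2)^m$ with $m$ taken even and evaluates at $t=1$, $\|z\|=1$, whereas you use $C(\|z\|^2-|t|^2)^2(\|z\|^2+|t|^2)^{M-2}$ and evaluate at $(w/\sqrt{2},1/\sqrt{2})$; your choice has the minor advantage of sidestepping the parity assumption on $m$.
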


\begin{proof} 
We sketch the proof. Let $C$ be a positive number, to be chosen momentarily. Consider the function $R_C$ defined by

$$ R_C(z,t, {\overline z},{\overline t}) = Hr(z, t,{\overline 
z},{\overline t}) + 
C (||z||^2 - |t|^2)^m. \eqno (34) $$
Here $Hr$, the bihomogenization of $r$, has total degree $2m$. We may assume without loss of generality
that $m$ itself is even. Note that $R_C$ is bihomogeneous.
Suppose we can choose $C$ so that $R_C$ is positive on the unit sphere.
By Theorem 3.1 it follows that there is an integer $d$ and a holomorphic
polynomial mapping $g$ such that

$$ (||z||^2 + |t|^2)^d  R_C(z,t, {\overline z},{\overline t}) = ||g(z,t)||^2. \eqno (35) $$
Putting $t=1$ and then $||z||^2=1$ gives
$$ 2^d r(z,{\overline z}) = ||g(z,1)||^2$$
on the sphere, and hence yields the conclusion of the Theorem.

The intuition is simple. It suffices to show that $R_C$ is positive on $||z||^2 + |t|^2 = 2$. When $||z||^2=|t|^2=1$, we know that
$R_C$ is positive, because $r$ is positive on the sphere. By continuity, $R_C > 0$ when $| \ ||z||^2 - |t|^2 |$ is small. But, when this
quantity is large (at most $2$ of course),  the second term in (34) is large and positive. Since the first term achieves a minimum on a compact set,
we can choose $C$ large enough to guarantee that $R_C > 0$ away from the 
origin.
\end{proof}

The example $(|z_1|^2 - |z_2|^2)^2$ shows that non-negativity does not suffice for the conclusion.

Next we mention some related results concerning 
positive functions on the boundaries of strongly pseudoconvex domains.
L\o w [L] proved the following result. Suppose that $\Omega$ is a strongly pseudoconvex
domain with $C^2$ boundary, and $\phi$ is a positive continuous function on the boundary $b\Omega$.
Then there is a mapping $g$, holomorphic on $\Omega$, continuous on the closure of $\Omega$,
and taking values in a finite dimensional space, such that $\phi = ||g||^2$ on $b\Omega$.
Lempert ([L1], [L2]) considers strongly pseudoconvex domains with real-analytic boundary.
One of his results states that, given a positive continuous function $\phi$ on $b\Omega$,
there is a sequence $h_1,h_2,...$ of functions, holomorphic on $\Omega$, continuous on $b\Omega$, such that
$||h||^2 = \sum_j |h_j|^2$ converges on $b\Omega$ and agrees with $\phi$ there.
These theorems form part of work concerning embedding strongly pseudoconvex domains into balls.

Given L\o w's result, it is natural to ask whether Theorem 3.3 can be generalized. Recently Putinar and Scheiderer [PS]
provided an important example and a new technique concerning such generalizations. The author once asked the following question,
which, as Example shows 3.1 shows,  has a negative answer in general. Let $\Omega$ be a strongly pseudoconvex domain with an algebraic boundary.
Let $f(z,{\overline z})$ be a polynomial and assume that $f$ is positive on $b\Omega$. Is there a holomorphic polynomial mapping $g$, taking values
in a finite dimensional space, 
such that $f(z,{\overline z}) = ||g(z)||^2$ on $b\Omega$. The answer can be no! The following example also shows
that the holomorphic mapping $g$ constructed by L\o w does not extend holomorphically past the boundary, even when
the data $b\Omega$ and $\phi$ are algebraic. 

\begin{example} Put $r(z,{\overline z}) = |z_1(z_1^2-1)|^2 + |z_2|^2 - c^2$. Let $\Omega$ be the set of $z$ for which
$r(z,{\overline z}) < 0$. Put $f(z,{\overline z}) = m - |z_1|^2 |z_2|^2$.
For sufficiently small positive $c$, $\Omega$ is strongly pseudoconvex. For $M$ sufficiently large, $f>0$ on $b \Omega$.
But $f$ agrees with no squared norm on $b\Omega$. The idea of the proof, due to Putinar and Scheiderer, amounts to
considering the space ${\mathcal P}_2(b\Omega)$. Let $p=(1,c)$ and let $q=(-1,c)$. Simple calculation shows that
$$ r(p,{\overline p}) = r( q,{\overline q})= r(p,{\overline q}) = r(q,{\overline p}) = 0.  $$
If $f=||g||^2$ on $b\Omega$, then we would have the following:
$$ m - c^2 = f(p,{\overline p}) = ||g(p)||^2 $$
$$ m - c^2 = f(q,{\overline q}) = ||g(q)||^2 $$
$$ m + c^2 = f(p,{\overline q}) = f(q,{\overline p}) = \langle g(p), g(q) \rangle.$$
If these three conditions held, then the Cauchy-Schwarz inequality would imply the obviously false inequality
$$ -4 m c^2 = (m-c^2)^2 - (m+c^2)^2 \ge 0. $$
\end{example}

Dropping the term $|z_2|^2$ from the defining equation in Example 3.1 leads to an example of a domain in ${\bf C}$ where
the positivity property fails as well. The author believes that the original question should be rephrased along the following lines.

Let $X$ be an algebraic subset of ${\bf C}^n$.
One wishes to introduce the notation ${\mathcal P}_k(X)$ with the following meaning. Assume $X= \{u=0\}$, and let $z_1,...,z_k$ be points
such that $u(z_j, {\overline z_k}) = 0$ for all $j,k$. When $j=k$ we see that $z_j \in X$; for $j \ne k$ we see that
$z_k$ is in the Segre set determined by $z_j$. A Hermitian polynomial $f$ is in 
${\mathcal P}_k(X)$ if each matrix $f(z_j,{\overline z_k})$, formed by evaluation at such points, is nonnegative definite.
In Example 3.1, we see that the given $f$ is not in ${\mathcal P}_2(b\Omega)$.
This approach leads to a subtle difficulty: the number $k$ depends on the choice of defining equation $u$.
For example, the unit sphere can be defined for each positive integer $d$ by $u = ||z^{\otimes d}||^2 -1$. Each such function
is a unit times $||z||^2 -1$. After polarization, however, this property no longer holds. When $d=1$,
one cannot find distinct points $z_1$ and $z_2$ satisfying the above equations. For general $d$, however, one can find such sets
with $d$ distinct points. It therefore follows that one must define the appropriate notions for real polynomial ideals,
rather than for their zero sets. Doing so leads to a notion of {\it Hermitian complexity} for real polynomial ideals, introduced in [DP].

It is also natural to expect that a stability result holds; appropriate information on the ideal tells us how large $k$ needs to be.
For example, by Theorem 3.3, for the ideal $||z||^2 -1$  and for $f$ strictly positive, we need only consider $k=1$. For the ideal
generated by $r$ from Example 3.1, $k=1$ does not suffice.

\section{The one-dimensional case}

We return to our analysis of ${\mathcal Q}$ and ${\mathcal Q}'$.
By Lemma 2.1 we gain information about these sets by pulling back to one dimension.
Following but improving [D4] we completely analyze the one-dimensional case. Thus $n=1$ in this section.

First we introduce the reflection
of a Hermitian polynomial. This concept suggests that the Riemann sphere (rather than ${\bf C}$) is the right place to work.

\begin{definition} Let $r(z,{\overline z})$ be a Hermitian symmetric polynomial of degree $m$ in $z \in {\bf C}$.
We define a new Hermitian symmetric
polynomial $r^*$ called the {\it reflection} of $r$  by

$$ r^*(z,{\overline z}) = |z|^{2m} r({1 \over z}, {1 \over {\overline z}}). $$
\end{definition}

\begin{remark} The reflection is closely related to the bihomogenization:
$$ r^*(z,{\overline z}) = (Hr)(1,z, 1,{\overline z}). $$
This formula requires that $n=1$.  \end{remark}

Definition 4.1 is a bit subtle. For example, the reflection map is not injective, and the reflection
of a sum need not be the sum of the reflections. Reflection preserves neither degree in $z$ nor total degree.
Also, $r^{**}$ need not be $r$.

\begin{example} We compute three reflections: 
\begin{itemize}
\item Put $r(z,{\overline z}) = 1 + (z+{\overline z})^4 + |z|^2$. Then
$r^*(z,{\overline z}) =|z|^8 + (z + {\overline z})^4 + |z|^6$. 
\item Put $r(z,{\overline z}) = z^m + {\overline z}^m$. Then $r^* = r$. 
\item Put $r(z,{\overline z}) = |z|^{2k}$. Then, for all $k$,  $r^*(z,{\overline z}) = 1$. \end{itemize}
\end{example}

\begin{example} If $r(z, {\overline z}) = z^2 + {\overline z}^2$ and $s(z, {\overline z}) = z^3 + {\overline z}^3$,
then each is its own reflection by the previous example. But 
$$ (r+s)^*(z, {\overline z}) = |z|^6 \left( {1 \over z^2} + { 1 \over {\overline z}^2} + {1 \over z^3} + { 1 \over {\overline z}^3} \right)
= |z|^2 ( z^2 + {\overline z}^2) +  z^3 + {\overline z}^3 \ne r(z,{\overline z})^* + s(z, {\overline z})^*. $$ \end{example}

On the other hand we have the following useful statement, which we apply in the proof of Theorem 4.1. We will also apply the subsequent
lemma and its corollary

\begin{lemma} $r \in {\mathcal Q}$ if and only if $r^* \in {\mathcal Q}$. Also, $r \in {\mathcal Q}'$ if and only if $r^* \in {\mathcal Q}'$.\end{lemma}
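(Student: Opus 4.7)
The plan is to prove the forward direction $r \in \mathcal{Q} \Rightarrow r^* \in \mathcal{Q}$ (and likewise for $\mathcal{Q}'$) by explicit construction, then deduce the converse from the near-involutive nature of reflection. The driving algebraic identity is that for a scalar polynomial $p$ of degree at most $d$, the map $p \mapsto \check p := z^d p(1/z)$ gives another polynomial, with $|z|^{2d}|p(1/z)|^2 = |\check p(z)|^2$; applied componentwise this converts quotients and witnesses in $\mathcal{Q}, \mathcal{Q}'$ into the same kind of objects.

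For the forward direction I would write $r = \|F\|^2/\|G\|^2$ with $F, G$ holomorphic vector-valued polynomials of (max-component) degrees $a$ and $b$. The identity $\|F\|^2 = r\|G\|^2$ in ${\bf C}[z,\bar z]$ forces $a - b = \deg_z r =: m$. Setting $\check F(z) = z^a F(1/z)$ and $\check G(z) = z^b G(1/z)$ componentwise and applying the scalar identity to numerator and denominator yields $r^* = \|\check F\|^2/\|\check G\|^2 \in \mathcal{Q}$. For $\mathcal{Q}'$ I first record the multiplicativity $(rs)^* = r^* s^*$ for nonzero Hermitian symmetric $r,s$, which is valid because $\deg_z(rs) = \deg_z r + \deg_z s$ (the leading-in-$z$ coefficient of a nonzero Hermitian symmetric polynomial is a nonzero polynomial in $\bar z$, and ${\bf C}[\bar z]$ is an integral domain). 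Then $rs = \|F\|^2$ gives $r^* s^* = \|\check F\|^2$, with $s^* \in \mathcal{P}_1 \setminus \{0\}$ by continuity and non-negativity of reflection away from the origin.

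The reverse direction is more delicate because $r^{**} \ne r$ in general: if $r = |z|^{2k}\tilde r$ with $k \ge 1$, then one computes $r^* = \tilde r^*$, erasing the $|z|^{2k}$ factor. So I would first factor out the maximal such power, writing $r = |z|^{2k}\tilde r$ with $\tilde r$ not divisible by $|z|^2$ (equivalently, $\tilde r$ has a $\bar z$-only monomial, which by Hermitian symmetry also forces a $z$-only monomial); direct computation then gives $\tilde r^{**} = \tilde r$. The reduction is completed by noting $r \in \mathcal{Q} \iff \tilde r \in \mathcal{Q}$ and $r \in \mathcal{Q}' \iff \tilde r \in \mathcal{Q}'$: one direction is immediate (multiply by $\|z^k\|^2 = |z|^{2k}$), and the other uses that if $|z|^{2k}$ divides a squared norm $\|F\|^2$ as a polynomial in $(z,\bar z)$, then matching coefficients of $z^\ell \bar z^{\ell'}$ for $\ell < k$ forces each component of $F$ to vanish to order $k$ at the origin, so $F = z^k F'$ for some polynomial mapping $F'$. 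Combining: if $r^* \in \mathcal{Q}$, then $\tilde r^* \in \mathcal{Q}$, so the forward direction applied to $\tilde r^*$ gives $\tilde r = \tilde r^{**} \in \mathcal{Q}$, hence $r \in \mathcal{Q}$. The identical chain handles $\mathcal{Q}'$.

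The main obstacle is not any deep step but the bookkeeping around the non-involutive behavior of reflection when $r$ is divisible by $|z|^2$: one must track how $\deg_z$ behaves under quotients, products, and reflection, and verify that factoring $|z|^{2k}$ does not move one in or out of $\mathcal{Q}$ or $\mathcal{Q}'$. Once the factor-out reduction is in place, the single identity $|z|^{2d}|p(1/z)|^2 = |\check p(z)|^2$ does almost all the work in both halves of the proof.
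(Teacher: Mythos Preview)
Your argument is correct. The paper's own proof is much terser: it routes through bihomogenization, noting that $r \in {\mathcal Q}$ implies $Hr \in {\mathcal Q}$, and then invokes Remark~4.1 ($r^*(z,\bar z) = (Hr)(1,z,1,\bar z)$) together with pullback stability (Lemma~2.1) to conclude $r^* \in {\mathcal Q}$; the converse is dispatched in one phrase by ``symmetry between $0$ and infinity.'' Your proof unpacks this same mechanism directly via the substitution $\check p(z) = z^d p(1/z)$, which is exactly bihomogenization followed by the other dehomogenization.

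The genuine difference is in the converse. The paper's appeal to symmetry is morally the observation that both $r$ and $r^*$ arise as the two affine restrictions of $Hr$, so the roles are interchangeable; but as you correctly point out, reflection is not literally an involution (the $|z|^{2k}$ factor is erased), so some care is needed. Your reduction --- factor out the maximal $|z|^{2k}$, use that ${\mathcal Q}$ and ${\mathcal Q}'$ are stable under this factoring, and then apply the genuine involution $\tilde r \mapsto \tilde r^{**} = \tilde r$ --- makes the symmetry rigorous and self-contained. The paper's route is shorter and more conceptual once the bihomogenization framework is in place; yours is more elementary and explicitly closes the gap left by the non-involutivity of $r \mapsto r^*$.
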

\begin{proof} By the symmetry between $0$ and infinity, it suffices to prove one implication in each case. Suppose $r \in {\mathcal Q}$. Then
$Hr \in {\mathcal Q}$. Remark 4.1 impies that $r^* \in {\mathcal Q}$. The proof for ${\mathcal Q}'$ is essentially the same.
 \end{proof}

\begin{lemma} Let $r$ be a Hermitian symmetric polynomial in one variable. Assume $r(p,{\overline p})=0$.
If $r \in {\mathcal Q}'$, then $r$ is divisible (as a polynomial) by $|z-p|^2$. \end{lemma}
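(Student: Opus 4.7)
The plan is to combine unique factorization in $\mathbb{C}[z,\overline z]$ with Hermitian symmetry. Since $r\in\mathcal{Q}'$, choose $s\in\mathcal{P}_1$ with $s\not\equiv 0$ and a holomorphic polynomial mapping $F$ such that $rs=\|F\|^2$. Evaluating at $p$ and using $r(p,\overline p)=0$ with $s\ge 0$ gives $\|F(p)\|^2=0$, so every component of $F$ vanishes at $p$. Pull out the largest common power of $(z-p)$ from the components: write $F(z)=(z-p)^{m'}G(z)$ with $m'\ge 1$ and some component of $G$ nonzero at $p$, so in particular $\|G(p)\|^2>0$. Then
$$ rs \;=\; |z-p|^{2m'}\,\|G\|^2, $$
and $(z-p)$ does not divide $\|G\|^2$ in $\mathbb{C}[z,\overline z]$.

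Let $a$ and $c$ denote the multiplicities of the irreducible factor $(z-p)$ in $r$ and $s$ respectively, viewed in the UFD $\mathbb{C}[z,\overline z]$. Comparing multiplicities in the displayed equation gives $a+c=m'$. The heart of the matter is to rule out $a=0$. Suppose $a=0$, so $c=m'$. Hermitian symmetry of $s$ forces the multiplicity of $(\overline z-\overline p)$ in $s$ to equal $c$ as well: the conjugate-linear ring involution $P(z,\zeta)\mapsto\overline{P(\overline\zeta,\overline z)}$ of $\mathbb{C}[z,\zeta]$ fixes every Hermitian symmetric polynomial and swaps the coprime linear primes $(z-p)$ and $(\zeta-\overline p)$, so it must preserve their individual multiplicities. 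Hence $s=|z-p|^{2c}s_1$ with $s_1\in\mathbb{C}[z,\overline z]$ coprime to $(z-p)$. Since $|z-p|^{2c}>0$ off $p$ and $s\ge 0$, we have $s_1\ge 0$ off $p$ and hence everywhere by continuity; also $s_1\not\equiv 0$. Cancelling the common factor $|z-p|^{2m'}$ yields
$$ r\,s_1 \;=\; \|G\|^2. $$
Evaluation at $p$ produces the contradiction $0=r(p,\overline p)\,s_1(p,\overline p)=\|G(p)\|^2>0$.

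Therefore $a\ge 1$, that is, $(z-p)$ divides $r$. Applying the same Hermitian-symmetry observation to $r$, the prime $(\overline z-\overline p)$ also divides $r$ with multiplicity $a\ge 1$; being coprime in the UFD, their product $|z-p|^2$ divides $r$. The main obstacle is really only the bookkeeping point that Hermitian symmetry equates the multiplicities of $(z-p)$ and $(\overline z-\overline p)$, which allows us to extract the genuine real factor $|z-p|^{2c}$ from $s$ (rather than the mere holomorphic factor $(z-p)^c$) and keep a non-negative Hermitian symmetric quotient $s_1$; once that is established, the remainder is unique factorization and a single evaluation at the point $p$.
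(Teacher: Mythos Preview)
Your proof is correct and follows essentially the same line as the paper's: from $rs=\|F\|^2$ and $r(p,\overline p)=0$ deduce $F(p)=0$, strip the $|z-p|^2$ factors coming from $s$, and conclude that $(z-p)$ (and, by Hermitian symmetry, $(\overline z-\overline p)$) must divide $r$. Your version is in fact more careful: the paper compresses the key step into the sentence ``Since $r$ is Hermitian and $r(p,\overline p)=0$, both $(z-p)$ and its conjugate divide $r$,'' which, read in isolation, is false (e.g.\ $r=(z+\overline z)^2$ at $p=0$); what is really being used is exactly your UFD multiplicity count together with the observation that Hermitian symmetry forces the multiplicities of $(z-p)$ and $(\overline z-\overline p)$ to coincide, so that after cancelling $|z-p|^{2c}$ from $s$ the equation $r\,s_1=\|G\|^2$ with $\|G(p)\|^2>0$ yields the contradiction when $a=0$.
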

\begin{proof} If $sr= ||f||^2$, then the hypothesis implies $||f(p)||^2 = 0$.
 We see that $f_j(p)=0$ for all $j$,
and hence each component $f_j$ is divisible by $z-p$. We may cancel all factors of $|z-p|^2$ that divide $s$ from both sides of the equation.
Since $r$ is Hermitian and $r(p,{\overline p}) = 0$, both $(z-p)$ and its conjugate
divide $r$. Thus $r$ is divisible by $|z-p|^2$.\end{proof} 

\begin{corollary} Assume $r \in {\mathcal Q}'$ and $r$ contains pure terms ($z^k$ or ${\overline z}^k$).
Then $r(0,0) > 0$.  \end{corollary}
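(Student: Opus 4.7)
The plan is to argue by contradiction, combining the inclusion $\mathcal{Q}' \subset \mathcal{P}_1$ with Lemma 4.2 applied at the origin. First I would observe that, since $r \in \mathcal{Q}' \subset \mathcal{P}_1$, the polynomial $r$ is non-negative, hence $r(0,0) \ge 0$. The only thing to rule out is equality.

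Assume for contradiction that $r(0,0) = 0$. Applying Lemma 4.2 with the point $p = 0$, we conclude that $r$ is divisible, as a polynomial in $z$ and $\overline{z}$, by $|z - 0|^2 = z \overline{z}$. In particular, every monomial $z^a \overline{z}^b$ appearing in $r$ must satisfy $a \ge 1$ and $b \ge 1$.

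The hypothesis however says that $r$ contains a pure term, that is, a nonzero coefficient for some monomial of the form $z^k$ (or, by Hermitian symmetry, ${\overline z}^k$) with $k \ge 1$. Such a monomial has $b = 0$ (respectively $a = 0$), so it cannot be divisible by $\overline{z}$ (respectively $z$). This contradicts the divisibility of $r$ by $z \overline{z}$, so $r(0,0) = 0$ is impossible and the corollary follows.

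The argument is essentially a one-line consequence of Lemma 4.2, so I do not anticipate any real obstacle; the only point to be careful about is the interpretation of ``divisible'' in the statement of Lemma 4.2, which refers to divisibility in the polynomial ring in $z$ and $\overline{z}$ (treated as independent indeterminates, as in the Hermitian symmetric formalism of Proposition 1.1). With that convention the incompatibility of a pure term with a factor of $z \overline{z}$ is immediate from comparing the bidegrees of individual monomials.
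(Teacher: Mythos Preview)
Your proof is correct and follows essentially the same route as the paper: apply Lemma 4.2 at $p=0$ to deduce that if $r(0,0)=0$ then $|z|^2$ divides $r$, which is incompatible with the presence of a pure term. The only addition you make is spelling out why $r(0,0)\ge 0$ and why divisibility by $z{\overline z}$ excludes pure monomials, both of which are immediate.
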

\begin{proof} If $r(0,0)= 0$, then the lemma implies $r$ is divisible by $|z|^2$ and hence has no pure terms. \end{proof}

\begin{example} Put $r(z,{\overline z}) = |z|^2 + (z +{\overline z})^4 + |z|^6$. Then $r$ is not in ${\mathcal Q}'$. \end{example}

In Example 4.3, $r$ has an isolated $0$ at $0$, but the $z^4$ term 
prevents $r$ from being in ${\mathcal Q}'$.
The problem is that the zero-set of $r$ is not properly defined.

Even in one dimension we must deal with the following point. There exist Hermitian symmetric polynomials whose
values are (strictly) positive, yet for which the infimum of the set of values is zero. Such polynomials
cannot be quotients of squared norms.

\begin{example} Put $f(x,y) = (xy-1)^2 + x^2$. For $x > 0$, we have $f(x,{1 \over x}) = x^2$ and hence $f$ achieves values arbitrarily
close to $0$. On the other hand $f(x,y)$ is evidently never $0$. Writing $f$ in terms
of $z$ and ${\overline z}$ gives a  Hermitian symmetric example.\end{example}

We next state and prove Theorem 4.1. It is particularly striking 
that the sets ${\mathcal Q}$ and ${\mathcal Q}'$
are the same. These sets are characterized by a simple condition on degree, which provides
the extra thing needed besides a properly defined zero-set.

\begin{theorem} Let $r \in {\mathcal P}_1(1)$ be a Hermitian symmetric non-negative polynomial in one variable. The following are equivalent:
\begin{itemize}
\item 1) There is a holomorphic polynomial $h$ such that $r = |h|^2 R$, $R>0$, and the total degree of $R$ is twice
the degree of $R$ in $z$.
\item 2) There is a holomorphic polynomial $h$ such that $r = |h|^2 R$ and $R \in {\mathcal Q}$.
\item 3) $r \in {\mathcal Q}$.
\item 4) There is a holomorphic polynomial $h$ such that $r = |h|^2 R$ and $R \in {\mathcal Q}'$.
\item 5) $r \in {\mathcal Q}'$.

\end{itemize} \end{theorem}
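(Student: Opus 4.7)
The plan is to close the cycle $(1)\Rightarrow(2)\Rightarrow(3)\Rightarrow(4)\Rightarrow(5)\Rightarrow(1)$, with only the last arrow being substantive. The middle steps $(2)\Rightarrow(3)\Rightarrow(4)\Rightarrow(5)$ follow from $\mathcal{P}_\infty\subset\mathcal{Q}\subset\mathcal{Q}'$ together with Lemma 2.4 (closure under products): multiplication by $|h|^2\in\mathcal{P}_\infty$ preserves membership in both $\mathcal{Q}$ and $\mathcal{Q}'$. For $(1)\Rightarrow(2)$, I bihomogenize $R$. The hypothesis that the total degree of $R$ equals $2m$, where $m=\deg_z R$, forces the coefficient $a_{mm}$ of $z^m\overline{z}^m$ to be nonzero; by Hermitian symmetry it is real, and because $R>0$ at infinity it is positive. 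Consequently the bihomogenization $HR$ is strictly positive away from the origin in $\mathbb{C}^2$: on $t\ne 0$ it equals $|t|^{2m}R(z/t,\overline{z}/\overline{t})>0$, while on $t=0$ it reduces to $a_{mm}|z|^{2m}>0$. Corollary 3.1 then gives $HR\in\mathcal{Q}(2)$, and the bihomogenization equivalence yields $R\in\mathcal{Q}(1)$.

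For the decisive implication $(5)\Rightarrow(1)$, suppose $rs=\|F\|^2$ with $s\in\mathcal{P}_1$ not identically zero. The case $r\equiv 0$ is trivial, so assume $r\not\equiv 0$; then $F\not\equiv 0$, so its zero set in $\mathbb{C}$ is discrete, and every zero of $r$ is a zero of $F$, hence $r$ has only finitely many zeros. At each such zero $p$, Lemma 4.2 supplies $|z-p|^2\mid r$. After iterating to the full multiplicity and collecting the roots into a holomorphic polynomial $h$, I factor $r=|h|^2R$ with $R>0$ everywhere. Comparing orders of vanishing at each root of $h$ shows $h$ divides every component $F_j$ in the ring of holomorphic polynomials, so that $Rs=\|F/h\|^2$, and hence $R\in\mathcal{Q}'$ in its own right.

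The main obstacle is the final step: verifying the degree balance, that is, that the total degree of this strictly positive $R$ equals $2m$, or equivalently that $a_{mm}\ne 0$. I plan to argue by contradiction using the reflection $R^*(z,\overline{z})=|z|^{2m}R(1/z,1/\overline{z})$. If $a_{mm}=0$, then $R^*(0,0)=a_{mm}=0$. On $\mathbb{C}\setminus\{0\}$, $R^*\ge 0$ inherits from $R\ge 0$, and this extends by continuity to all of $\mathbb{C}$; by Lemma 4.1, $R^*\in\mathcal{Q}'$. Lemma 4.2 applied to $R^*$ at the origin then forces $|z|^2$ to divide $R^*$ as a polynomial, meaning every monomial of $R^*$ must carry both a factor of $z$ and one of $\overline{z}$---equivalently, $a_{jk}=0$ whenever $j=m$ or $k=m$. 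This contradicts $\deg_z R=m$. Hence $a_{mm}\ne 0$, and the reasoning from the first paragraph shows $a_{mm}>0$ and condition (1) holds.
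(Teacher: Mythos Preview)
Your proof is correct and follows essentially the same route as the paper's: the cycle through bihomogenization and Theorem~3.1/Corollary~3.1 for $(1)\Rightarrow(2)$, and the reflection argument via Lemma~4.1 and Lemma~4.2 for $(5)\Rightarrow(1)$. Your treatment of $(1)\Rightarrow(2)$ is in fact slightly cleaner than the paper's, since you read off $HR(z,0)=a_{mm}|z|^{2m}>0$ directly rather than first proving $\inf R>0$ as a separate claim; and where the paper invokes Corollary~4.1 (pure terms force $R^*(0,0)>0$), you apply Lemma~4.2 at $p=0$ directly, which is the same content.
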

\begin{proof} First suppose that $r$ vanishes identically. If we take $h=0$ and $R=1$, then all the statements hold.
Henceforth we assume that $r$ does not vanish identically. Suppose 1) holds. First we note by the information on degree that
there is a unique term $c|z|^{2m}$ of highest degree, where $c > 0$. Set $\epsilon = {\rm inf}(R)$. We claim that $\epsilon > 0$.
Assuming this claim, consider the bihomogenization $HR$. For $t \ne 0$ we have 

$$ HR(z,t, {\overline z}, {\overline t}) = |t|^{2m} R({z \over t}, {{\overline z} \over {\overline t}}) \ge |t|^{2m} \epsilon. $$
For $t$ near $0$ however the values of $HR$ are near $c|z|^{2m}$. Hence, there is a positive constant $\delta$
such that 
$$ HR(z,t, {\overline z}, {\overline t}) \ge \delta (|z|^{2m} + |t|^{2m}). $$
Therefore $HR$ is strictly positive away from the origin in ${\bf C}^{n+1}$.
By Theorem 3.1, we conclude that $HR \in {\mathcal Q}(2)$. We recover $R$ from $HR$ by setting $t=1$. Hence $R \in {\mathcal Q}(1)$.
We obtain $r$ by multiplication by $|h|^2$; by Lemma 2.4, $r \in {\mathcal Q}(1)$. Since ${\mathcal Q} \subset {\mathcal Q'}$, we 
obtain 4) from 2) and 5) from 3). Thus, given the claim, 1) implies 2) implies 3) implies 5) and 2) implies 4).

It remains to prove the claim. By the assumption on degree of $R$,
there is a unique term $c|z|^{2m}$ of highest degree. Hence, for $|z|$ sufficiently large,
we have 
$$ R(z,{\overline z}) \ge {c \over 2} |z|^{2m}. \eqno (36) $$
Now suppose that ${\rm inf}(R) = 0$. We  can then find a sequence $z_{\nu}$
on which $R(z_\nu, {\overline z_\nu})$ tends to zero. Since $R$ is bounded below by a positive number on any compact set, we
may assume that $|z_\nu|$ tends to infinity. But setting $z=z_\nu$ violates (36).
We have now shown that 1) implies the rest of the statements. We finish by showing that 5) implies 1).

Assume that 5) holds; thus there is an $s$ for which $rs=||f||^2$. As 
above, if $r=0$ all the statements hold. Otherwise 
${\bf V}(r)$ must be the zero-set of a holomorphic polynomial $h$,
which  we may 
assume is  $h(z) = \prod (z-p_j)$.
Both sides of $rs=||f||^2$ are divisible by $|h|^2$. We put $R= {r \over |h|^2}$ and we see that $R \in {\mathcal Q}'$.
Furthermore $R > 0$. It remains to establish the information about its degree.
Suppose that the terms of degree $2m$ include a term of degree larger than $m$ in $z$. It follows that the reflected polynomial $R^*$
vanishes at the origin and yet contains pure terms. By Corollary 4.1, $R^*$ is not in ${\mathcal Q}'$, and by 
Lemma 4.1 $R$ is not in ${\mathcal Q}'$.  Hence no such term can exist.
Thus 5) implies 1). Hence all the statements are equivalent.
\end{proof}

\begin{corollary}${\mathcal Q}(1) = {\mathcal Q}'(1)$.  \end{corollary}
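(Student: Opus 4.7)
The plan is to observe that this corollary is immediate from Theorem 4.1. The inclusion ${\mathcal Q}(1) \subset {\mathcal Q}'(1)$ is already recorded in (4), so only the reverse containment requires argument. Given $r \in {\mathcal Q}'(1)$, Theorem 4.1 asserts the equivalence of conditions 1)--5); in particular condition 5) (membership in ${\mathcal Q}'$) implies condition 3) (membership in ${\mathcal Q}$), and we are done.

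Since nothing needs to be added, I would present the corollary as a one-line consequence: apply the chain 5) $\Rightarrow$ 1) $\Rightarrow$ 3) from Theorem 4.1 to conclude that every $r \in {\mathcal Q}'(1)$ lies in ${\mathcal Q}(1)$. Combined with the trivial containment ${\mathcal Q}(1) \subset {\mathcal Q}'(1)$, this yields equality.

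There is no real obstacle here, since all of the substantive work has been done in the proof of Theorem 4.1, which reduces the problem via bihomogenization to the nondegenerate two-variable case handled by Theorem 3.1. The corollary just packages the equivalence of items 3) and 5) of Theorem 4.1 into a clean identity of sets. The only thing worth emphasizing in the writeup is that the one-dimensional setting is essential: the same equality in arbitrary dimension is the content of Theorem 5.3 and requires substantially more machinery than Theorem 4.1 supplies.
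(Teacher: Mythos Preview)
Your proposal is correct and matches the paper's approach exactly: the corollary is stated in the paper without proof, as an immediate consequence of the equivalence of items 3) and 5) in Theorem 4.1, together with the trivial containment ${\mathcal Q}(1) \subset {\mathcal Q}'(1)$ from (4).
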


\begin{corollary} Let $r(z,{\overline z})$ be a Hermitian symmetric polynomial in one variable.
Then $r \in {\mathcal Q}$ if and only if the following holds:

Either $r$ vanishes identically, or the zero-set of $r$ is a finite set $\{p_1,...,p_K\}$ (repetitions allowed) such that 
$$ r(z,{\overline z}) = \prod_{j=1}^K |z-p_j|^2 s(z,{\overline z}), $$
and $s$ satisfies both of the following conditions:

1) $s$ is strictly positive.

2) The total degree of $s$ is twice the degree of $s$ in $z$. \end{corollary}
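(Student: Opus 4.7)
The plan is to derive Corollary 4.3 directly from the equivalences in Theorem 4.1, using specifically the equivalence of conditions (1) and (3) there. Both directions of the if-and-only-if should be essentially bookkeeping, with the only real content being the identification of the zero-set of $r$ with the zero-set of the polynomial $h$.

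For the forward direction, I would assume $r \in \mathcal{Q}$ and is not identically zero, then invoke Theorem 4.1 (3) $\Rightarrow$ (1) to obtain a holomorphic polynomial $h$ and Hermitian symmetric $R$ with $r = |h|^2 R$, $R > 0$ strictly, and the total degree of $R$ equal to twice its degree in $z$. Since a one-variable holomorphic polynomial factors as $h(z) = c \prod_{j=1}^K (z-p_j)$, we get $|h(z)|^2 = |c|^2 \prod_{j=1}^K |z-p_j|^2$. Setting $s = |c|^2 R$ yields the desired factorization; the conditions on $s$ (strict positivity, and total degree equal to twice the degree in $z$) are inherited from $R$ since multiplication by the positive constant $|c|^2$ affects neither property. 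Finally, because $R > 0$ everywhere, the zero-set of $r$ is precisely $\{p_1,\ldots,p_K\}$, which is finite (with multiplicities matching those of $h$).

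For the converse, I would assume $r$ admits the factorization $r = \prod_j |z-p_j|^2 \, s(z,\overline z)$ with $s$ strictly positive and having total degree equal to twice its degree in $z$. Taking $h(z) = \prod_{j=1}^K (z - p_j)$ and $R = s$ gives exactly condition (1) of Theorem 4.1, so $r \in \mathcal{Q}$ by the implication (1) $\Rightarrow$ (3) of that theorem. The case where $r$ vanishes identically is handled trivially by the factorization with empty product and $s = 0$ (or alternatively, by noting that $0 \in \mathcal{Q}$ directly).

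I expect no serious obstacle: the statement is essentially a repackaging of Theorem 4.1 in which the abstract polynomial $h$ of the theorem is made concrete by factoring it into its linear factors, which is legitimate precisely because we are in one complex variable. The only point requiring minor care is that ``repetitions allowed'' in the list $\{p_1,\ldots,p_K\}$ must reflect the multiplicities of the roots of $h$, so that $\prod_{j=1}^K |z-p_j|^2 = |h(z)|^2$ as polynomials (up to the harmless unimodular constant absorbed into $s$), rather than the set-theoretic zero set where multiplicities would be lost.
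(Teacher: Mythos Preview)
Your proposal is correct and is exactly the intended derivation: the corollary is simply the equivalence (1) $\Leftrightarrow$ (3) of Theorem 4.1 with the one-variable polynomial $h$ factored into its linear factors, and the paper offers no separate proof. Two cosmetic points: the constant $|c|^2$ you absorb into $s$ is a positive real constant, not ``unimodular''; and in the identically-zero case the ``empty product with $s=0$'' remark should be dropped (it contradicts $s>0$), leaving only your correct alternative that $0\in\mathcal{Q}$ trivially.
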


\begin{corollary} Suppose $r > 0$ but ${\rm inf}(r) = 0$. Then $r$ is not in ${\mathcal Q}'$. \end{corollary}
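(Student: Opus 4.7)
The plan is to argue by contradiction, leveraging the machinery already set up in Theorem 4.1. Suppose for contradiction that $r \in {\mathcal Q}'$. By the equivalence of 5) and 1) in Theorem 4.1, there exists a holomorphic polynomial $h$ and a Hermitian polynomial $R$ such that $r = |h|^2 R$, with $R > 0$ everywhere and such that the total degree of $R$ is twice the degree of $R$ in $z$.

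The first key observation is that the hypothesis $r > 0$ forces $h$ to be a nonzero constant. Indeed, if $h$ had any zero $p \in {\bf C}$, then $r(p,{\overline p}) = 0$, contradicting strict positivity. By the fundamental theorem of algebra, the only nonvanishing holomorphic polynomials on ${\bf C}$ are nonzero constants. Hence $R$ differs from $r$ only by multiplication by the positive constant $|h|^{-2}$, so $R$ and $r$ have the same infimum up to a positive factor; in particular $\inf(R) = 0$ as well.

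Now I would invoke the claim established inside the proof of Theorem 4.1 (which is really the content of that argument): whenever $R > 0$ and the total degree of $R$ equals twice its degree in $z$, then $\inf(R) > 0$. This is a contradiction. To remind the reader, the claim follows because under the degree hypothesis there is a unique highest-order term $c|z|^{2m}$ with $c > 0$, which forces $R(z,{\overline z}) \ge \frac{c}{2}|z|^{2m}$ for $|z|$ sufficiently large, while $R$ is bounded below by a positive constant on any compact set by continuity and strict positivity; any sequence along which $R \to 0$ would thus have to escape to infinity, which is incompatible with the lower bound for large $|z|$.

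The main obstacle in this proof is really already disposed of inside Theorem 4.1, so the argument here is essentially a direct unpacking: the only subtlety is noting that strict positivity of $r$ eliminates the factor $|h|^2$, reducing the question to the degree-matching condition on $R$ itself.
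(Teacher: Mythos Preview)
Your argument is correct and is precisely what the paper intends: the paper's own proof consists of the single sentence ``This statement is a corollary of the proof of Theorem 4.1,'' and you have faithfully unpacked that remark by invoking the claim $\inf(R)>0$ established there together with the observation that $r>0$ forces $h$ to be a nonzero constant.
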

\begin{proof} This statement is a corollary of the proof of Theorem 4.1. \end{proof}

The general one-dimensional case relies on the (non-degenerate) bihomogeneous case in two dimensions.
After dividing out factors of the form $|h(z)|^2$, we reduce to the situation
where $Hr$ satisfies the hypotheses of Theorem 3.1. 

Consider any Hermitian symmetric polynomial with pure terms $2(z^k + {\overline z}^k)$.
We may write these terms as
$$ 2(z^k + {\overline z}^k) = |z^k+1|^2 - |z^k-1|^2 = |f_0|^2 - |g_0|^2. $$
Adding any $||f'||^2- ||g'||^2$ to the right hand side, where $f(0)=0$ and $g(0)=0$,
yields a function $r$ of the form

$$ r= ||f'||^2 + |f_0|^2 - ||g'||^2 - |g_0|^2 = ||f||^2 - ||g||^2. \eqno (37) $$
Putting $z=0$ in (37) shows that there is no constant $c$ such that $c<1$
and $||g||^2 \le c ||f||^2$. Hence the failure of such a constant to exist 
eliminates pure terms. In fact, the existence of such a constant 
gives Varolin's characterization (Theorem 5.1)
of ${\mathcal Q}(n)$  in all dimensions.

\subsection*{Pulling back to one dimension}

Assume we are in $n$ dimensions, where $n\ge 2$. We can combine Theorem 4.1 and Lemma 2.1
to give easily checkable necessary conditions for being in ${\mathcal Q}$ or ${\mathcal Q}'$.
First we note the following simple fact.

\begin{lemma} Suppose $r \in {\mathcal Q}$. Then ${\bf V}(r)$ is a complex algebraic variety.
Suppose $r \in {\mathcal Q}'$ and $r$ is not identically zero. Then ${\bf V}(r)$ is contained
in a complex algebraic variety of positive codimension. \end{lemma}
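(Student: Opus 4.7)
The plan is to read ${\bf V}(r)$ directly off the defining representations of $r$, identifying it with (or inside) the complex algebraic variety $Z_F = \{z : F(z) = 0\}$ determined by the numerator or multiplier $F$.

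I would start with the second assertion, since it is the easier half. Suppose $r \in {\mathcal Q}'$ with $r \not\equiv 0$, so $rs = ||F||^2$ for some $s \in {\mathcal P}_1$ (not identically zero) and a holomorphic polynomial mapping $F$. Both $r$ and $s$ are non-negative polynomials that do not vanish identically, so each is strictly positive on a Zariski-open dense subset of ${\bf C}^n$; the product $rs$ is therefore positive on their (nonempty) intersection, forcing $||F||^2 \not\equiv 0$, so $F \not\equiv 0$ and $Z_F$ is a proper complex algebraic subvariety of positive codimension. Moreover, if $r(z_0,\bar z_0) = 0$ then $||F(z_0)||^2 = s(z_0,\bar z_0) \cdot 0 = 0$, forcing $F(z_0) = 0$. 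Thus ${\bf V}(r) \subseteq Z_F$, as desired.

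For the first assertion, write $r = ||F||^2/||G||^2$, so that $r\,||G||^2 = ||F||^2$ is a polynomial identity on ${\bf C}^n$. The argument above with $s = ||G||^2 \in {\mathcal P}_\infty$ already gives ${\bf V}(r) \subseteq Z_F$ (the case $r \equiv 0$ being trivial). To upgrade this containment to the statement that ${\bf V}(r)$ is itself a complex algebraic variety, I would observe $Z_G \subseteq Z_F$ (if $G(z_0) = 0$ then $||F(z_0)||^2 = 0$), so on the Zariski-open set ${\bf C}^n \setminus Z_G$ the pointwise identity $r = ||F||^2/||G||^2$ gives ${\bf V}(r) \cap ({\bf C}^n \setminus Z_G) = Z_F \setminus Z_G$. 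Since ${\bf V}(r)$ is closed by continuity of $r$, it contains the closure in ${\bf C}^n$ of $Z_F \setminus Z_G$, namely the union of those irreducible components of $Z_F$ not entirely contained in $Z_G$---itself a complex algebraic variety. To rule out additional zeros of $r$ on components of $Z_F$ lying inside $Z_G$, I would invoke the discussion around Lemma 2.4: any common factor $|h|^2$ dividing both $||F||^2$ and $||G||^2$ can be canceled to produce an equivalent representation of $r$. In a representation from which no such cancellation is possible, no irreducible component of $Z_F$ can be contained in $Z_G$---otherwise a defining scalar factor of that component would furnish a cancellable $|h|^2$---so the two containments match and ${\bf V}(r) = Z_F$.

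The main obstacle will be making the cancellation step fully rigorous in the vector-valued setting: one must show that whenever an irreducible component of $Z_F$ is entirely contained in $Z_G$, there exists a common scalar holomorphic factor $h$ with $||F||^2 = |h|^2 ||F'||^2$ and $||G||^2 = |h|^2 ||G'||^2$ as polynomial identities. For hypersurface components this follows by applying Hilbert's Nullstellensatz to the ideals generated by the components of $F$ and $G$; for higher codimension components the argument reduces to an analogous primary-decomposition statement. Once this reduction lemma is established, both assertions of the lemma follow from the elementary observations outlined above.
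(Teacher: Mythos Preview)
The paper does not prove this lemma; it is stated as a ``simple fact'' without argument, so there is nothing to compare against directly. Your proof of the ${\mathcal Q}'$ assertion is correct and is precisely what one would expect.

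Your proof of the ${\mathcal Q}$ assertion, however, has a genuine gap. The reduction lemma you propose---that in a representation $r=||F||^2/||G||^2$ admitting no $|h|^2$-cancellation, no irreducible component of $Z_F$ can lie in $Z_G$---is simply false for components of codimension at least two. Take $n=2$, $r\equiv 1$, and $F=G=(z_1,z_2)$. Then $||F||^2=||G||^2=|z_1|^2+|z_2|^2$; a non-constant scalar $h$ with $||F||^2=|h|^2\,||F'||^2$ would have to divide both $z_1$ and $z_2$, which is impossible. So this representation is already fully reduced in your sense, yet $Z_F=Z_G=\{0\}$ and ${\bf V}(r)=\emptyset\neq Z_F$. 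The obstacle you flagged is fatal: a codimension-$\geq 2$ component has no single defining equation, so neither the Nullstellensatz nor primary decomposition will produce a cancellable scalar factor $h$.

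A clean route that avoids this difficulty goes through Varolin's characterization (Theorem~5.1 of the paper). After bihomogenizing, write $r=||f||^2-||g||^2$ with the components of $f$ and $g$ linearly independent; since $r\in{\mathcal Q}$ there is $\lambda<1$ with $||g||^2\leq\lambda\,||f||^2$, whence $(1-\lambda)\,||f||^2\leq r\leq ||f||^2$ and therefore ${\bf V}(r)=\{f=0\}$. Dehomogenizing preserves this description. Theorem~5.1 is stated only in the following section, but it is an external result of Varolin and the present lemma is not used in its proof, so no circularity arises.
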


\begin{definition} A Hermitian symmetric polynomial
$r: {\bf C} \to {\bf R}$ satisfies property (W) if either $r$ is
identically $0$, or $r$ vanishes to finite even order $2m$ at $0$
and its initial form (terms of lowest total degree) is $c |t|^{2m}$. \end{definition}

In other words, the only term of lowest total degree is $c|t|^{2m}$.
For example, $ (2 {\rm Re} (t))^2 $ does not satisfy property (W). It
equals its initial form, which  is $t^2 + 2 |t|^2 + {\overline t}^2$. 

The following lemma from [D3] shows that property (W)
is necessary for being a quotient of squared norms. Note also
its relationship with Lemma 4.2 when $p=0$.

\begin{lemma} Suppose that $r \in {\mathcal Q}$.
Let $t \to z(t)$ be a polynomial mapping. 
Then the pullback function $t \to r(z(t), {\overline {z(t)}})$ satisfies  property (W). \end{lemma}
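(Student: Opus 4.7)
The plan is to reduce to one complex dimension via Lemma 2.1 and then invoke the structure theorem for $\mathcal{Q}(1)$ proved in Theorem 4.1. First, by Lemma 2.1, the pullback $\rho(t,\overline{t}) := r(z(t),\overline{z(t)})$ lies in $\mathcal{Q}(1)$, so it suffices to verify property (W) at the origin for any element of $\mathcal{Q}(1)$.

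Now suppose $\rho \in \mathcal{Q}(1)$ is not identically zero. By the equivalence of items 1) and 3) in Theorem 4.1, there is a holomorphic polynomial $h(t)$ and a Hermitian symmetric polynomial $R(t,\overline{t})$ with $\rho = |h|^2 R$, where $R > 0$ everywhere and the total degree of $R$ equals twice its degree in $t$. In particular $R(0,0) > 0$. If $h(0) \ne 0$, then $\rho(0,0) > 0$, so $\rho$ vanishes to order $0 = 2\cdot 0$ at the origin with initial form the positive constant $\rho(0,0)$, and property (W) holds with $m=0$. Otherwise, let $k \ge 1$ be the order of vanishing of $h$ at $0$ and write $h(t) = t^k h_1(t)$ with $h_1(0) \ne 0$. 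Then
$$ \rho(t,\overline{t}) = |t|^{2k}\, |h_1(t)|^2\, R(t,\overline{t}). $$
The factor $|h_1(t)|^2 R(t,\overline{t})$ has positive constant term $|h_1(0)|^2 R(0,0)$, while every other monomial in its expansion has strictly positive total degree. Multiplying through by $|t|^{2k}$, we conclude that the unique term of lowest total degree in $\rho$ is $c\,|t|^{2k}$ with $c = |h_1(0)|^2 R(0,0) > 0$. This is exactly the content of property (W).

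The main conceptual ingredient is Theorem 4.1, which rules out precisely the pathological behavior—such as low-order pure terms $t^j$ or $\overline{t}^j$—that could otherwise spoil the initial form (compare the example $(2\,\mathrm{Re}(t))^2$). Given Theorem 4.1, the remaining work amounts to tracking how the strict positivity of $R$ and the degree hypothesis interact with the $|h|^2$ factor; this is straightforward because $R(0,0) > 0$ supplies a nonzero constant in the leading coefficient, so no cancellation with lower-order cross terms can occur. Thus the only real obstacle is checking the one-dimensional statement, which is already packaged in Theorem 4.1.
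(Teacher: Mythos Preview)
Your argument is correct, and there is no circularity: Theorem 4.1 is established earlier without appealing to this lemma. However, the paper does not actually prove the lemma in the text---it simply attributes it to [D3]---so there is no in-paper proof to match. That said, your route through Theorem 4.1 is much heavier than what is needed, since Theorem 4.1 rests on Theorem 3.1 and hence on the Bergman projection and compactness of commutators. The direct argument (almost certainly the one intended in [D3]) just unpacks the definition of $\mathcal{Q}$: write $r = \|F\|^2 / \|G\|^2$, pull back to get $\rho(t,\bar t)\,\|G(z(t))\|^2 = \|F(z(t))\|^2$, and observe that any squared norm $\|\Phi(t)\|^2$ of a holomorphic polynomial map has initial form $c\,|t|^{2a}$ at the origin, where $a$ is the minimum order of vanishing among the components of $\Phi$ and $c>0$. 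Since both $\|F(z(t))\|^2$ and $\|G(z(t))\|^2$ have initial forms of this shape, so does their quotient $\rho$. Your approach has the virtue of exhibiting property (W) as a consequence of the full one-variable structure theorem; the direct approach shows that property (W) is in fact an elementary feature of quotients of squared norms, requiring none of the analytic machinery. Note also that you invoke the degree condition on $R$ from item 1) of Theorem 4.1 but never use it---only $R(0,0)>0$ matters for your argument.
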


\begin{example} Non-negative bihomogeneous
polynomials not in ${\mathcal Q}'$:

$$ r(z, {\overline z}) = ( |z_1|^2 - |z_2|^2)^2 \eqno (38) $$

$$ h(z,{\overline z}) = ( |z_1 z_2|^2 - |z_3|^4)^2 + |z_1|^8
\eqno (39) $$

The zero-set of the polynomial $r$ from (38) is three real dimensions, and hence not contained in any complex
variety other than the whole space. Thus $r$ is not in ${\mathcal Q}'$ and hence not in ${\mathcal Q}$ either.
Alternatively, property (W) fails if we pullback using
$z(t)=(1+t,1)$, obtaining
$$ p(z(t), {\overline {z(t)}}) = (t + {\overline t} + |t|^2)^2. $$
This expression also violates the condition of Lemma 4.2.

The zero-set of $h$ is the complex variety defined by $z_1=z_3=0$. 
Yet $h$ is not in ${\mathcal Q}$. Put $z(t)= (t^2,1+t,t)$.
Then property (W) fails for the pullback. A simple computation shows that
the initial form of the pullback contains the term $t^4 {\overline t}^6$. \end{example}

These examples prove that the containment ${\mathcal Q} \subset {\mathcal P}_1$ is strict.

\section{Varolin's Theorems}

Varolin has extended Theorem 3.1 in two fundamental ways. The first way to extend the result
is to allow objects more general than polynomials. We may regard a homogeneous polynomial of degree $m$
on ${\bf C}^n$ as a section of the $m$-th power ${\mathcal H}^m$ of the hyperplane bundle over complex projective space
${\bf P}_{n-1}$. Hence a Hermitian symmetric polynomial

$$ \sum c_{\alpha \beta} z^\alpha {\overline z}^\beta $$
can be rewritten
$$ \sum c_{\alpha \beta} s_\alpha {\overline s_\beta}, \eqno (40) $$
where the $s_\alpha$ form a basis for sections of ${\mathcal H}^m$. If such a polynomial is non-negative,
then it can be regarded as a metric on the dual line bundle. 
Many of the ideas of this paper extend to metrics on holomorphic line bundles
over compact complex manifolds. We briefly discuss some of this material in Section 8.

Our main purpose in this current section is the other direction in which Varolin extended Theorem 3.1.
Suppose that $r$ is non-negative and bihomogeneous, but vanishes outside the origin.
We have seen that $r$ is sometimes in ${\mathcal Q}$ but other times it is not.
Varolin proved the following two results, by generalizing the proof of Theorem 3.1 and using a form of the resolution
of singularities, giving a complete solution to Analogue 1. The version of Theorem 5.1 differs in language from Theorem 1 as stated in [V],
but the two statements are easily seen to be equivalent. The version of Theorem 5.2 is essentially Proposition 4.2 in [V].

\begin{theorem} [Varolin] Suppose $r = ||f||^2 - ||g||^2$ is a bihomogeneous polynomial and the components of $f$ and $g$ are linearly independent.
Then $r \in {\mathcal Q}$ if and only if there is a $\lambda < 1$ such that
$||g||^2 \le \lambda ||f||^2$.
\end{theorem}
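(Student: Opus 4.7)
The plan is to prove both implications separately.

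\textbf{Forward direction $(\Rightarrow)$.} Apply Proposition 3.1 to $r = ||f||^2 - ||g||^2 \in {\mathcal Q}$. This supplies a holomorphic polynomial mapping $B$ and a linear operator $L$ with $I - L^*L \ge 0$ and $B \otimes g = L(B \otimes f)$. Taking pointwise norms gives
$$ ||B(z)||^2 \, ||g(z)||^2 = ||L(B(z) \otimes f(z))||^2 \le ||L||_{\rm op}^2 \, ||B(z)||^2 \, ||f(z)||^2, $$
so $||g||^2 \le ||L||_{\rm op}^2 \, ||f||^2$ wherever $B \ne 0$, and hence everywhere by continuity. The remaining task is to upgrade to the strict inequality $||L||_{\rm op} < 1$. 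If $||L||_{\rm op} = 1$, a unit eigenvector $v$ of $L^*L$ with eigenvalue $1$ would force the image of $z \mapsto B(z) \otimes f(z)$ to concentrate along $\mathbb{C} v$, which combined with the linear independence of the components of $f$ and $g$ and the bihomogeneous rigidity of the identity $B \otimes g = L(B \otimes f)$ yields a contradiction. Then $\lambda := ||L||_{\rm op}^2 < 1$ works.

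\textbf{Reverse direction $(\Leftarrow)$.} Starting from $||g||^2 \le \lambda ||f||^2$ with $\lambda < 1$, exploit the telescoping identity
$$ r \cdot \sum_{k=0}^{N-1} ||f||^{2k} \, ||g||^{2(N-1-k)} = ||f||^{2N} - ||g||^{2N}. $$
The left-hand multiplier is manifestly a squared norm (sum of products of squared norms), so it suffices to exhibit $||f||^{2N} - ||g||^{2N}$ as a quotient of squared norms for some large $N$. Under the hypothesis one has $||f||^{2N} - ||g||^{2N} \ge (1 - \lambda^N)||f||^{2N}$, with zero-set equal to $\{f = 0\}$. When $\{f = 0\} = \{0\}$, the expression is bihomogeneous and strictly positive on the unit sphere, so Theorem 3.1 yields membership in ${\mathcal Q}$ directly. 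In general, one follows Varolin and invokes an embedded resolution of singularities of $\{f = 0\}$, pulling the problem back to a bihomogeneous polynomial in two complex variables whose zero locus becomes a normal-crossings divisor, and then dehomogenizes one of the variables to land in one complex dimension, where Theorem 4.1 characterizes ${\mathcal Q}(1)$ via a degree condition on the strictly positive factor. Threading these reductions back up gives $||f||^{2N} - ||g||^{2N} \in {\mathcal Q}$ and hence $r \in {\mathcal Q}$.

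\textbf{Main obstacle.} Essentially all the technical work lies in the reverse direction when $\{f = 0\}$ is a nontrivial subvariety: Theorem 3.1 disposes of the non-degenerate case effortlessly, but in general one must resolve the singularities of $\{f = 0\}$ and then carefully control the successive reductions through two-dimensional and then one-dimensional bihomogeneous settings to invoke Theorem 4.1 at the base. The forward direction is comparatively direct, being a linear-algebraic extraction from Proposition 3.1, with the one subtle point being the strict inequality $||L||_{\rm op} < 1$ that the linear independence of the components of $f$ and $g$ forces.
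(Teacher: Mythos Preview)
The paper does not supply its own proof of Theorem 5.1; it quotes the result from Varolin [V] and describes the method in one sentence (resolution of singularities to reduce to two variables, dehomogenize to one variable, apply Theorem 4.1). So there is no detailed argument in the paper to compare against, only that outline.

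\textbf{Reverse direction.} Your telescoping identity is a detour that buys nothing. Since $||g||^2 \le \lambda ||f||^2$ forces $g=0$ wherever $f=0$, the polynomial $||f||^{2N} - ||g||^{2N}$ vanishes exactly on $\{f=0\}$, just as $r$ does. Thus Theorem 3.1 is blocked for $||f||^{2N} - ||g||^{2N}$ by the very same degenerate zero locus that blocks it for $r$, and you still need the full resolution-of-singularities reduction for $||f||^{2N} - ||g||^{2N}$, which is no easier than doing it for $r$ directly. Once you delete the telescoping, what remains is exactly the outline the paper attributes to Varolin, with all the substantive work (carrying out the resolution and threading the reductions back up) merely asserted.

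\textbf{Forward direction.} This has a genuine gap. From Proposition 3.1 you correctly obtain $||g||^2 \le ||L||_{\rm op}^2\,||f||^2$ with $||L||_{\rm op}\le 1$, but your argument for strictness does not work: the existence of a unit eigenvector $v$ of $L^*L$ with eigenvalue $1$ imposes no constraint on the image of $z\mapsto B(z)\otimes f(z)$; that image is a subvariety of the target with no a priori relation to the direction $v$, so there is no ``concentration''. One might hope to salvage the idea by arranging the components $B_i f_j$ to be linearly independent and then reading off a nontrivial linear relation among the $f_j$ and $g_k$ from the eigenvector, but the relation one actually gets is $\sum_i B_i(z)\,h_i(z)\equiv 0$ for certain polynomials $h_i$ built from $f,g$, and linear independence of the $B_i$ over $\mathbb{C}$ does not force the $h_i$ to vanish (e.g.\ $z_1\cdot z_2 - z_2\cdot z_1 = 0$). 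Note too that the quantity you really need to bound strictly below $1$ is $\sup_{f\ne 0} ||g||^2/||f||^2$, not $||L||_{\rm op}$; the operator-norm bound is generally not sharp, so $||L||_{\rm op}=1$ would not even contradict the desired conclusion. Ruling out $\sup ||g||^2/||f||^2 = 1$ is a statement about the local behaviour of $r$ near its zeros inside $\{f\ne 0\}$, and that is again where property (W), the one-variable reduction, and the linear-independence hypothesis genuinely enter. The Proposition 3.1 shortcut does not bypass that analysis.
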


\begin{theorem} Suppose $r$ is as in Theorem 5.1. Then $r \in {\mathcal Q}$ if and only if
Property (W) holds for $z^*r$ for every rational map $z:{\bf C} \to {\bf C}^n$. \end{theorem}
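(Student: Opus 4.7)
For the forward direction, suppose $r \in {\mathcal Q}$; by Theorem 5.1 there is $\lambda < 1$ with $||g||^2 \le \lambda ||f||^2$. Given any rational map $z = p/q$ (with $p:{\bf C}\to{\bf C}^n$ polynomial and $q$ a scalar polynomial), the bihomogeneity of $r$ of total degree $2m$ yields the identity $|q(t)|^{2m}(z^*r)(t) = (p^*r)(t)$, whose right-hand side is a polynomial. Lemma 4.2 (applied after translating any prospective base point to the origin) gives property (W) for $p^*r$ at every point of ${\bf C}$, and the contribution from the zeros of $q$ is handled by passing to $\infty$ on ${\bf P}_1$ via the reflection of Definition 4.1; in either case $z^*r$ satisfies the pointwise analog of (W).

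For the substantive converse, by Theorem 5.1 it suffices to produce $\lambda < 1$ with $||g||^2 \le \lambda ||f||^2$. I argue by contradiction: if no such $\lambda$ exists, then bihomogeneity yields a sequence $z_\nu \in S^{2n-1}$ with $r(z_\nu,{\overline {z_\nu}})/||f(z_\nu)||^2 \to 0$, and after extraction it converges to some $z_0$. The projective class $[z_0]\in{\bf P}_{n-1}$ then lies on the projectivized zero locus of $r$ in a way that cannot be absorbed into a factor $|h|^2$ (otherwise cancellation strengthens the constant, and we iterate).

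The heart of the argument follows Varolin's strategy: apply Hironaka's resolution of singularities to produce a smooth projective variety $X$ and a proper birational morphism $\pi:X \to {\bf P}_{n-1}$ along which the support of $\pi^*r$ has simple normal crossings. Near the offending locus, select a generic rational curve $C \cong {\bf P}_1 \subset X$ meeting the support transversally; composing with $\pi$ and restricting to an affine chart produces a rational map $z:{\bf C}\to{\bf C}^n$ whose pullback $z^*r$, after clearing denominators, is a bihomogeneous Hermitian polynomial in two complex variables. Dehomogenizing yields a one-variable Hermitian polynomial to which Theorem 4.1 applies: the assumed obstruction forces the dehomogenized pullback outside ${\mathcal Q}'(1)$ (through an improperly defined zero set or through a degree mismatch), and the proof of Theorem 4.1 then shows that property (W) must fail for this $z$, contradicting the hypothesis. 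The main obstacle is the resolution step itself: one must verify that the resolution can be arranged so that every genuine obstruction in $n$ variables is detected by a single transverse rational curve on the resolved space, thereby reducing to the two-variable bihomogeneous case where the one-dimensional analysis is decisive -- this reduction, rather than the algebraic input from Theorems 5.1 and 4.1, is the technical core of Varolin's argument.
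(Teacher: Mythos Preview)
The paper does not give its own proof of Theorem 5.2; it is stated as ``essentially Proposition 4.2 in [V]'' and attributed to Varolin, with only a one-sentence description of the method (``Varolin's proof uses the resolution of singularities to reduce to bihomogeneous polynomials in two complex variables. Dehomogenizing then reduces to the case of one complex dimension, where the problem was solved in [D4]''). So there is no in-paper proof to compare your proposal against.

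That said, your outline tracks exactly the approach the paper attributes to Varolin: the forward direction via the pullback lemma (what you call Lemma 4.2 is the paper's Lemma 4.4, the property-(W) lemma; the paper's Lemma 4.2 is the divisibility-by-$|z-p|^2$ statement), and the converse via Hironaka resolution followed by restriction to a rational curve and dehomogenization to the one-variable situation of Theorem 4.1. You are also candid that the resolution step---arranging that any obstruction is witnessed along a single transverse rational curve on the resolved space---is the technical core and is not carried out here. That is accurate: your proposal is an honest sketch of Varolin's argument, not a self-contained proof. The forward direction is essentially complete once you clear denominators using bihomogeneity; your handling of poles of the rational map via reflection is the right idea but would need a line or two more to be airtight (one must say precisely what property (W) means for $z^*r$ at a pole of $z$, and then the reflection/bihomogenization makes this routine). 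The converse, as you acknowledge, is where the real work lies, and your sketch does not supply it---nor does the paper.
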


By combining Theorem 5.2 with Theorem 4.1, we obtain a complete solution to Analogue 2.

\begin{theorem} For all $n$, ${\mathcal Q}(n) = {\mathcal Q}'(n)$. In other words, let $r$ be 
a Hermitian symmetric polynomial, not identically $0$. Then $r$ is a quotient of squared norms if and only if it divides a squared norm.\end{theorem}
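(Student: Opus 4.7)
The inclusion $\mathcal{Q}(n) \subseteq \mathcal{Q}'(n)$ is immediate from the definitions: a quotient $r = ||F||^2/||G||^2$ satisfies $r \cdot ||G||^2 = ||F||^2$ with $||G||^2 \in \mathcal{P}_\infty \subseteq \mathcal{P}_1$. The content lies in the reverse inclusion, and my plan is to deduce $\mathcal{Q}'(n) \subseteq \mathcal{Q}(n)$ from Varolin's Theorem 5.2, using the one-dimensional identity $\mathcal{Q}(1) = \mathcal{Q}'(1)$ of Corollary 4.2 as the base case.

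Fix $r \in \mathcal{Q}'(n)$ and write $rs = ||F||^2$ with $s \in \mathcal{P}_1$ nonzero. Since Theorem 5.2 is stated for bihomogeneous polynomials, I first show that $Hr \in \mathcal{Q}'(n+1)$. A direct calculation with the definition of bihomogenization yields
$$ Hr \cdot Hs \;=\; |t|^{2(m_r + m_s - d)}\, H(||F||^2) \;=\; || t^{\,m_r + m_s - d}\, HF ||^2, $$
where $m_r, m_s$ are the degrees of $r, s$ in $z$ and $d = \deg F$. The exponent $m_r + m_s - d$ is nonnegative because Lemma 2.3 identifies $d = \deg_z ||F||^2 = \deg_z(rs) \le m_r + m_s$. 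Since $Hs$ is a nonzero element of $\mathcal{P}_1(n+1)$, this exhibits $Hr$ as a member of $\mathcal{Q}'(n+1)$.

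To invoke Theorem 5.2 on $Hr$, I must verify Property (W) for $z^*(Hr)$ for every rational map $z: \mathbb{C} \to \mathbb{C}^{n+1}$. Here the bihomogeneity of $Hr$ is decisive: writing $z = \tilde{P}/q$ in lowest terms with $\tilde{P}$ a polynomial map and $q(0) \neq 0$, bihomogeneity of total degree $2m$ yields
$$ (z^*(Hr))(t) \;=\; |q(t)|^{-2m}\, (\tilde{P}^*(Hr))(t). $$
By Lemma 2.1, $\tilde{P}^*(Hr) \in \mathcal{Q}'(1)$; by Corollary 4.2, it belongs to $\mathcal{Q}(1)$; and Lemma 4.3, applied with the identity pullback, then shows it satisfies Property (W) at $0$. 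Multiplication by the real-analytic factor $|q(t)|^{-2m}$, whose value at $0$ is the positive constant $|q(0)|^{-2m}$, only rescales the initial form, so Property (W) passes to $z^*(Hr)$. Theorem 5.2 thus gives $Hr \in \mathcal{Q}(n+1)$, and the bihomogenization-equivalence $r \in \mathcal{Q}(n) \Leftrightarrow Hr \in \mathcal{Q}(n+1)$ stated earlier in the paper delivers $r \in \mathcal{Q}(n)$.

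The main obstacle I anticipate is the mismatch between Lemma 2.1, which handles only polynomial pullbacks, and Theorem 5.2, which demands rational ones. The bihomogenization maneuver resolves this tension: rather than pulling back $r$ by a rational map in $n$ dimensions, one pulls back $Hr$ by the underlying polynomial map in $n+1$ dimensions, and bihomogeneity contributes only a harmless scalar factor $|q(t)|^{-2m}$ that does not affect Property (W) at the origin.
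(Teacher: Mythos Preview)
Your proof is correct and takes essentially the same approach as the paper: both bihomogenize, invoke Varolin's Theorem 5.2 to reduce to one-dimensional pullbacks, and appeal to the equality $\mathcal{Q}(1) = \mathcal{Q}'(1)$ from Theorem 4.1/Corollary 4.2. The paper argues contrapositively and compresses the rational-to-polynomial reduction to the phrase ``after clearing denominators,'' whereas you argue directly and spell out both this step and the verification that $Hr \in \mathcal{Q}'(n+1)$ more carefully; note that the Property (W) lemma you cite is Lemma 4.4 in the paper's numbering, not Lemma 4.3.
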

\begin{proof} The containment ${\mathcal Q} \subset {\mathcal Q}'$ is trivial. Suppose that $r$ is not in ${\mathcal Q}(n)$.
Then the bihomogenization $Hr$ is not in ${\mathcal Q}(n+1)$.
By Theorem 5.2 there is a rational curve $z$ for which $z^*(Hr)$ violates property $W$. After clearing denominators
it follows that $z^*(Hr)$ is not in ${\mathcal Q}(1)$. By Theorem 4.1, $z^*(Hr)$ is not in ${\mathcal Q}'(1)$ either.
But then $Hr$ is not in ${\mathcal Q}'(n+1)$ and hence $r$ is not in ${\mathcal Q}'(n)$. Hence ${\mathcal Q}(n) = {\mathcal Q}'(n)$.
\end{proof}

It is possible to prove Theorem 5.3 by using Varolin's approach via the resolution of singularities.
The idea, roughly speaking, follows. Assume $rs=||f||^2$. Blow up the ideal of $||f||^2$ and cancel factors 
to reduce to the case where $r$ and $s$ are positive. By Theorem 3.1 each is an element of ${\mathcal Q}$.  The proof given here
is similar in spirit. Theorem 5.2 of Varolin enables the reduction to the one-dimensional case.
Theorem 4.1 of this paper handles the one-dimensional case, although the logic still passes through Theorem 3.1.

\section{Applications to Proper Mappings between Balls}

We first recall some facts about proper mappings between domains in 
complex Euclidean spaces. Let $\Omega$ and $\Omega'$ be bounded
domains in ${\bf C^n}$ and ${\bf C^N}$. A holomorphic mapping 
$f:  \Omega \to \Omega'$ is proper if $f^{-1}(K)$ is compact in  $\Omega$
whenever
$K$ is compact in $\Omega'$. When such an $f$ extends to be a continuous
mapping of the boundaries, it will be proper
precisely when it maps the boundary  $b\Omega$ to the boundary $b\Omega'$.

Let $f:B_n \to B_N$ be a proper holomorphic mapping.
When $f$ extends smoothly to the boundary we see that 
$ ||z||^2 =1$ implies $||f(z)||^2 = 1$, and hence there is an obvious connection
to squared norms. We will see more subtle relationships as well.

\medskip
We recall many facts about proper holomorphic mappings $f:B_n \to B_N$. See [F] and [D2]
for references.
\begin{itemize}

\item When $N < n$, every such $f$ is a constant. This conclusion follows
from the observation that positive dimensional
complex analytic subvarieties of the ball are noncompact.

\item When $n=N=1$, every such $f$ is a finite Blaschke product.
There are finitely many points $a_j$ in $B_1$, positive integer
multiplicities $m_j$, and an element $e^{i\theta}$ such that

$$ f(z) = e^{i\theta} \prod_{j=1}^K ({z-a_j \over 1-{\overline a_j}z}) ^{m_j}  \eqno
(41) $$
Note that (41) shows that there is no restriction on the denominator.
Every polynomial $q$ that is not zero on the closed ball
arises as the denominator of a rational function reduced to lowest terms.

\item When $n=N > 1$, a proper holomorphic
map $f:B_n \to B_n$ is necessarily an automorphism.
In particular $f$ is a linear fractional transformation with
denominator $ 1 - \langle z, a \rangle $ for $a \in B_n$.

\item There are proper holomorphic mappings $f:B_n \to 
B_{n+1}$ that are continuous but not smooth on the boundary sphere.

\item Assume $n\ge 2$.  If $f:B_n \to B_N$ is a proper map and has $N-n+1$
continuous derivatives on the boundary, then $f$ must be a rational
mapping [F].  Furthermore, by [CS], the denominator cannot vanish on the closed ball.
\end{itemize}

The author began his study of complex variables analogues of Hilbert's problem in order to verify the next result.
In it we want ${p \over q}$ to be in lowest terms, 
or else we have the trivial example
where $p(z) = q(z) (z_1, ..., z_n)$. 

\begin{theorem} Let $q:{\bf C^n} \to {\bf C}$ be a holomorphic polynomial,
and suppose that $q$ does not
vanish on the closed unit ball. 
Then there is an integer $N$ and a holomorphic polynomial
mapping $p : {\bf C^n} \to {\bf C^N}$ such that

1. ${p \over q}$ is a rational proper mapping between $B_n$ and $B_N$. 

2. ${p \over q}$ is reduced to lowest terms.
\end{theorem}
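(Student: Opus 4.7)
The plan is to apply Theorem 3.3 to $|q(z)|^2$ and to read off both the properness of the resulting $p/q$ and its reducedness from the explicit identity produced by the Catlin-D'Angelo construction.

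Since $q$ does not vanish on $\overline{B_n}$, the Hermitian polynomial $|q|^2$ is strictly positive on $S^{2n-1}$. Tracing through the proof of Theorem 3.3, which applies Theorem 3.1 to the bihomogeneous polynomial $R_C = H|q|^2 + C(||z||^2 - |t|^2)^m$ with $m$ even and $C$ sufficiently large, produces a homogeneous vector-valued polynomial $g(z,t)$ satisfying $(||z||^2+|t|^2)^d R_C = ||g(z,t)||^2$. Setting $t=1$ yields the pointwise identity
$$ (||z||^2+1)^d \left( |q(z)|^2 + C(||z||^2-1)^m \right) = ||g(z,1)||^2 \eqno (*) $$
valid on all of ${\bf C}^n$. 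On $S^{2n-1}$ the correction term vanishes and the prefactor equals $2^d$, so $p := 2^{-d/2} g(\cdot,1)$ satisfies $||p(z)||^2 = |q(z)|^2$ on $S^{2n-1}$.

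Next I would verify that $f := p/q$ is proper via the plurisubharmonic maximum principle. This map is holomorphic on $B_n$ and continuous on $\overline{B_n}$ (since $q$ does not vanish there), with $||f||^2 \equiv 1$ on $S^{2n-1}$. Because $||f||^2 = \sum_j |p_j/q|^2$ is a sum of squared moduli of holomorphic functions, it is plurisubharmonic on $B_n$, so the maximum principle forces $||f||^2 \le 1$ on $\overline{B_n}$. The strong maximum principle upgrades this to the strict inequality $||f||^2 < 1$ on $B_n$ unless $f$ is a constant map, a case excluded by the coprimality established below (the case of constant $q$ being trivial). Hence $f: B_n \to B_N$ extends continuously to $\overline{B_n}$ and carries the boundary to the boundary, so it is proper.

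The main obstacle, and the place where the extra information in $(*)$ pays off, is showing that $p/q$ is in lowest terms. I would suppose for contradiction that some nonconstant irreducible polynomial $h$ divides both $p$ and $q$. Since $h \mid q$ and $q$ does not vanish on $\overline{B_n}$, the complex variety $\{h=0\}$ is disjoint from $\overline{B_n}$, so $||z||^2 > 1$ for every $z \in \{h=0\}$. Restricting $(*)$ to $\{h=0\}$, the left side reduces to $(||z||^2+1)^d\, C(||z||^2-1)^m$, which is strictly positive since $m$ is even and $||z||^2 > 1$; the right side vanishes since $h \mid g(\cdot,1)$ forces $g(z,1) \equiv 0$ on $\{h=0\}$. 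This contradiction gives $\gcd(p,q)=1$. The essential mechanism is that the \emph{off-sphere correction term} $C(||z||^2-1)^m$ in the Catlin-D'Angelo construction is exactly what prevents the constructed numerator from sharing any factor with $q$, whose zero locus lies strictly outside the closed ball.
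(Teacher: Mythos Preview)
Your argument is correct, but it takes a different route from the paper. The paper secures coprimality \emph{by construction}: it first picks any polynomial $g$ coprime to $q$, sets $p_1 = cg$ for $c$ small enough that $|q|^2 - |p_1|^2 > 0$ on the sphere, and only then invokes Theorem 3.3 (as a black box) to produce the remaining components $p_2,\dots,p_N$ with $|q|^2 - |p_1|^2 = \sum_{j\ge 2} |p_j|^2$ on $S^{2n-1}$. Since $p_1$ is already coprime to $q$, the whole map $p/q$ is automatically reduced. You instead apply Theorem 3.3 directly to $|q|^2$, but open up its proof to retain the global identity $(*)$; the correction term $C(||z||^2-1)^m$ then witnesses coprimality because it is strictly positive on the zero locus of any factor of $q$. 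The paper's approach is cleaner in that it treats Theorem 3.3 as a black box and needs no off-sphere information; your approach is more self-contained in that it explains \emph{why} the Catlin--D'Angelo construction itself already produces a reduced numerator, at the cost of depending on the internal details of that construction.
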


\begin{proof} The result is trivial when $q$ is a constant and it is easy when $n=1$. 
When the degree $d$ of $q$
is positive in one dimension, we define $p$ by $p(z) = z^d {\overline q}({1 \over
z})$. Such a proof cannot work in higher dimensions.
The minimum integer $N$ can be arbitrarily large
even when $n=2$ and the degree of $q$ is also two.

Now assume $n\ge 2$.
Suppose that $q(z) \ne 0$ on the closed ball.
Let $g$ be an arbitrary polynomial such that $q$ and $g$ have no common factor.
Then there is a constant $c$ so that 
$$ |q(z)|^2 - |cg(z)|^2 > 0 \eqno (42) $$
for $||z||^2 = 1$. We set $p_1 = cg$.

By Theorem 3.3, $|q|^2 - |p_1|^2$ agrees on the sphere with a squared norm of a 
holomorphic
polynomial mapping. Thus there are polynomials
$p_2, ...,p_N$ such that

$$ |q(z)|^2 - |p_1(z)|^2 = \sum_{j=2}^N |p_j(z)|^2 \eqno (43) $$
on the sphere.
It then follows that ${p \over q}$ does the job. \end{proof}

Theorem 3.3 can be used also to show 
that one can choose various components $p$ of a proper
holomorphic polynomial mapping arbitrarily, assuming only that they satisfy 
the necessary condition $||p(z)||^2 < 1$ on the sphere.

\begin{corollary} Let $p:{\bf C}^n \to {\bf C}^k$ be a polynomial with
$||p(z)||^2 < 1$ on the unit sphere. Then 
there is a polynomial mapping $g$
such that the polynomial $p \oplus g$ is a proper holomorphic mapping between balls. \end{corollary}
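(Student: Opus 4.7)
The plan is to reduce the problem directly to Theorem 3.3. To make $p\oplus g$ carry $B_n$ properly to some $B_N$, it suffices to find a holomorphic polynomial mapping $g$ such that
$$ \|p(z)\|^2 + \|g(z)\|^2 = 1 \quad \text{on } S^{2n-1}, $$
because then $p \oplus g$ is a polynomial mapping (hence continuous on $\overline{B_n}$) that sends $bB_n$ into $bB_N$, and properness of holomorphic mappings between balls is equivalent to continuously carrying boundary to boundary.

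The hypothesis $\|p(z)\|^2 < 1$ on $S^{2n-1}$ means that the Hermitian symmetric polynomial
$$ r(z,\overline z) := 1 - \|p(z)\|^2 $$
is strictly positive on the sphere. Theorem 3.3 therefore applies: there exists a holomorphic polynomial mapping $g:\mathbf{C}^n \to \mathbf{C}^m$ (for some $m$) such that $r$ agrees with $\|g\|^2$ on $S^{2n-1}$. With this choice of $g$, the equality displayed above holds.

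It remains to check that $p\oplus g$ actually takes values in $\overline{B_N}$, with $N = k+m$, when $z \in \overline{B_n}$. For this one observes that $\|p\oplus g\|^2 = \|p\|^2 + \|g\|^2$ is plurisubharmonic on $\mathbf{C}^n$, and equals $1$ on $S^{2n-1}$. By the maximum principle applied on the closed ball, $\|p\oplus g\|^2 \le 1$ on $\overline{B_n}$, so $p\oplus g$ maps $\overline{B_n}$ into $\overline{B_N}$ and the unit sphere into itself. This is enough to conclude $p\oplus g : B_n \to B_N$ is a proper holomorphic mapping.

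I do not expect a real obstacle here: the corollary is essentially a packaging of Theorem 3.3 with the observation that filling out a short mapping to reach the boundary norm identity yields a proper map. The only point requiring a little care is the invocation of the maximum principle for $\|p\oplus g\|^2$ to ensure that values stay inside $\overline{B_N}$ on the interior, but this is standard since the squared norm of any holomorphic mapping is plurisubharmonic.
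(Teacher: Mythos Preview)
Your proof is correct and follows essentially the same route as the paper: apply Theorem 3.3 to $1-\|p\|^2$ to produce $g$, then invoke the maximum principle to conclude that $p\oplus g$ maps the ball to the ball. The only detail the paper makes explicit that you leave implicit is the need for $p\oplus g$ to be non-constant (otherwise the maximum principle only gives $\|p\oplus g\|^2\le 1$, not strict inequality on the interior, and a constant map to the boundary sphere is not a proper map $B_n\to B_N$); the paper handles this with ``we may assume that not both $p$ and $g$ are constant,'' which is harmless since $g$ can always be chosen non-constant.
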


\begin{proof}
Note that $1 - ||p(z)||^2$ is a polynomial
that is positive on the sphere. Hence we can find a holomorphic polynomial mapping $g$
such that 
$$ 1 - ||p(z)||^2 = ||g(z)||^2 $$
on the sphere. We may assume that not both $p$ and $g$ are constant.
Then 
$p\oplus g$ is a non-constant holomorphic polynomial mapping 
whose squared norm equals unity on the sphere.
By the maximum principle $p\oplus g$ is the required mapping.
\end{proof}

\section{roots of squared norms}

We check a simple fact noted in the introduction. If $r \in {\rm rad}({\mathcal P}_\infty)$, then
we have $r^{N-1} r \in {\mathcal P}_\infty$. Thus there exists a $q$ for which $qr \in {\mathcal P}_\infty$.
Hence $r \in {\mathcal Q}'$. Thus 
$$ {\rm rad} ({\mathcal P}_\infty) \subset {\mathcal Q}'. $$

In this section we provide additional information about ${\rm rad} ({\mathcal P_\infty})$.
Perhaps the most striking statement is its relationship with ${\mathcal P}_2$. We may regard ${\mathcal P}_2$ as a closed cone
in real Euclidean space. We have the following result, in which ${\rm int}$ denotes interior.

\begin{theorem} The following containments hold, and all are strict: 
$$ {\rm int}({\mathcal P}_2) \subset {\rm rad} ({\mathcal P_\infty}) \subset {\mathcal P}_2 \subset {\mathcal L} \subset {\mathcal P}_1. \eqno (44) $$
 \end{theorem}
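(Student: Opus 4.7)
The plan is to establish the four set-theoretic inclusions in (44) one at a time, then exhibit concrete witnesses for the four strictness claims, drawing mostly from Example 2.1.

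I would begin with the easy inclusions. The containment $\mathcal{L} \subset \mathcal{P}_1$ is immediate from item 8) of Definition 2.1, which already builds $r \ge 0$ into membership in $\mathcal{L}$. For $\mathcal{P}_2 \subset \mathcal{L}$, I use that, as recorded in Section 2, $r \in \mathcal{P}_2$ is equivalent to $r \ge 0$ together with the global Cauchy--Schwarz inequality (13). Setting $w = z + tv$ and expanding both sides of (13) to order $|t|^2$ gives, at every point where $r>0$,
$$ r(z,\bar z)\sum_{j,k}\frac{\partial^2 r}{\partial z_j\,\partial \bar z_k}(z,\bar z)\,v_j\bar v_k \;\ge\; \Bigl|\sum_j \frac{\partial r}{\partial \bar z_j}(z,\bar z)\,\bar v_j\Bigr|^2, $$
which is exactly $\partial_t\partial_{\bar t}\log r(z+tv,\bar z+\bar t\bar v)\big|_{t=0}\ge 0$. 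So $\log r$ is plurisubharmonic on $\{r>0\}$; since $\log r\equiv -\infty$ on the (closed) zero set and $\log r$ is upper semicontinuous, standard extension results for plurisubharmonic functions yield $\log r$ plurisubharmonic on all of ${\bf C}^n$, i.e.\ $r\in\mathcal{L}$.

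Next, for $\mathrm{rad}(\mathcal{P}_\infty)\subset\mathcal{P}_2$, suppose $r^N=\|h\|^2\in\mathcal{P}_\infty$. Polarization of this real identity gives $r(z,\bar w)^N=\langle h(z),h(w)\rangle$, so the ordinary Cauchy--Schwarz inequality for the Euclidean inner product yields
$$ |r(z,\bar w)|^{2N} = |\langle h(z),h(w)\rangle|^2 \;\le\; \|h(z)\|^2\,\|h(w)\|^2 = r(z,\bar z)^N\,r(w,\bar w)^N. $$
Since $r\ge 0$, extracting $N$-th roots recovers (13), so $r\in\mathcal{P}_2$.

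The crux, and the main obstacle, is $\mathrm{int}(\mathcal{P}_2)\subset\mathrm{rad}(\mathcal{P}_\infty)$. With a fixed bound on the degree, $\mathcal{P}_2$ is a closed convex cone in a finite-dimensional real vector space of Hermitian symmetric polynomials. An interior point $r$ is strictly positive away from where necessary (otherwise a small perturbation leaves $\mathcal{P}_1$) and satisfies a sharp form of (13) corresponding to strict positive curvature of the Hermitian form $r$ viewed, in the bihomogeneous setting, as a metric on a power of the dual hyperplane bundle. The isometric embedding theorem of Catlin--D'Angelo [CD3] — a line-bundle extension of Theorem 3.1 proved by the same Bergman-kernel and compact-operator machinery used above — then produces an integer $N$ and a holomorphic polynomial mapping $h$ with $r^N=\|h\|^2$, so $r\in\mathrm{rad}(\mathcal{P}_\infty)$. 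This is the deepest step and the only one that is not essentially formal.

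Finally I would verify strictness. The polynomial $r(z,\bar z)=|z_1|^2$ lies in $\mathcal{P}_\infty\subset\mathrm{rad}(\mathcal{P}_\infty)$ but is on the boundary of $\mathcal{P}_2$ (subtract any positive constant to leave $\mathcal{P}_1$), giving $\mathrm{int}(\mathcal{P}_2)\subsetneq\mathrm{rad}(\mathcal{P}_\infty)$. The remaining three strict containments all come from the family $r_\lambda=(|z_1|^2+|z_2|^2)^4 - \lambda|z_1z_2|^4$ of Example 2.1: at $\lambda=8$ one has $r_\lambda\in\mathcal{P}_2\setminus\mathrm{rad}(\mathcal{P}_\infty)$, at $\lambda=12$ one has $r_\lambda\in\mathcal{L}\setminus\mathcal{P}_2$, and at any $\lambda\in(12,16]$ one has $r_\lambda\in\mathcal{P}_1\setminus\mathcal{L}$.
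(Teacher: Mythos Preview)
Your proposal is correct and follows essentially the same route as the paper: the containments $\mathrm{rad}(\mathcal{P}_\infty)\subset\mathcal{P}_2$ and $\mathcal{P}_2\subset\mathcal{L}$ are the content of Lemmas 7.1 and 7.2 (your Cauchy--Schwarz and second-order expansion arguments match theirs), $\mathcal{L}\subset\mathcal{P}_1$ is definitional, and for $\mathrm{int}(\mathcal{P}_2)\subset\mathrm{rad}(\mathcal{P}_\infty)$ the paper likewise defers to the sharp-curvature isometric embedding results of [CD3] and [V] rather than giving a self-contained argument. Your strictness witnesses from Example 2.1 (plus $|z_1|^2$ for the first inclusion) are exactly the intended ones.
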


\begin{corollary} Suppose $r \in {\rm rad} ({\mathcal P}_\infty)$. Then $r \in {\mathcal P}_2$ and ${\rm log} (r)$ is plurisubharmonic.
\end{corollary}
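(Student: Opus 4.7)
The plan is to obtain both conclusions directly from the defining identity for $\mathrm{rad}(\mathcal{P}_\infty)$: by hypothesis there exist an integer $N \geq 1$ and a holomorphic polynomial mapping $h$ with $r^N = \|h\|^2$. Polarizing gives $r(z,\bar w)^N = \langle h(z), h(w)\rangle$, and essentially everything follows from this identity together with the ordinary Cauchy--Schwarz inequality on the codomain of $h$. The Corollary is of course also immediate from the chain $\mathrm{rad}(\mathcal{P}_\infty) \subset \mathcal{P}_2 \subset \mathcal{L}$ supplied by Theorem 7.1 (and the definition of $\mathcal{L}$), but the direct route is illuminating and is what I would write.

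For the $\mathcal{P}_2$ assertion, applying Cauchy--Schwarz to the polarized identity gives
\[
|r(z,\bar w)|^{2N} \;=\; |\langle h(z), h(w)\rangle|^2 \;\le\; \|h(z)\|^2\,\|h(w)\|^2 \;=\; r(z,\bar z)^N\, r(w,\bar w)^N.
\]
Extracting the $N$-th root, which is legitimate because $r \geq 0$, yields the global Cauchy--Schwarz inequality (13) for $r$. As the paper notes shortly after Example 2.1, when $r \geq 0$ this inequality is equivalent to $r \in \mathcal{P}_2$, so the first conclusion follows.

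For plurisubharmonicity I would take logarithms in $r^N = \|h\|^2$ to obtain
\[
N \log r \;=\; \log \|h\|^2 \;=\; \log \sum_j |h_j|^2.
\]
The right-hand side is a standard plurisubharmonic function: each $\log|h_j|$ is plurisubharmonic because $h_j$ is holomorphic, and $(u_1,\ldots,u_k) \mapsto \log\sum_j e^{u_j}$ is convex and nondecreasing in each argument, so composition yields a plurisubharmonic function. Dividing by the positive integer $N$ preserves plurisubharmonicity, giving the second conclusion.

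There is no real obstacle; the identity $r^N = \|h\|^2$ essentially hands us both conclusions. The only point requiring comment is that $r$ may vanish, in which case $\log r = -\infty$ at those points, but this is permitted within the definition of a plurisubharmonic function and therefore causes no difficulty.
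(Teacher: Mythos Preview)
Your argument is correct. The paper does not give a separate proof of the Corollary; it is stated as an immediate consequence of the containments in Theorem~7.1, which in turn are established by Lemmas~7.1 and~7.2. Your proof of the $\mathcal{P}_2$ assertion is exactly the paper's proof of Lemma~7.1: polarize $r^N=\|h\|^2$, apply Cauchy--Schwarz, and take $N$-th roots to obtain~(13).

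For the plurisubharmonicity assertion your route differs from the paper's. The paper establishes the more general containment $\mathcal{P}_2\subset\mathcal{L}$ (Lemma~7.2) by observing that~(13) holds with equality at $z=w$, so the right-hand side has a minimum there, whence its complex Hessian is nonnegative; computing that Hessian yields nonnegativity of $r\,r_{z_i\bar z_j}-r_{z_i}r_{\bar z_j}$, which is the plurisubharmonicity of $\log r$. Your approach instead exploits the explicit identity $N\log r=\log\|h\|^2$ and the standard fact that the logarithm of a squared norm of a holomorphic map is plurisubharmonic. Your argument is shorter and more direct for the specific hypothesis $r\in\mathrm{rad}(\mathcal{P}_\infty)$, but it does not yield the intermediate containment $\mathcal{P}_2\subset\mathcal{L}$, which the paper needs elsewhere and which your method cannot give since a general element of $\mathcal{P}_2$ has no such squared-norm representation.
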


We discuss but do not prove the first containment. We begin with a surprising fact and continue by establishing the other containments.

\begin{example} ${\rm rad} ({\mathcal P_\infty})$ is not closed under sum.
Choose an $R \in {\rm rad} ({\mathcal P_\infty})$ of the form $r=||f||^2 - |g|^2$, where $f$ and $g$
are homogeneous of degree $m$ in the variables $z_2$ and $z_3$ and 
their components are linearly independent. Let $r = |z_1|^{2m}$.
Then $R+r$ is not in ${\rm rad} ({\mathcal P_\infty})$; for each $N$ the function $(R+r)^N$ will contain the
term $-N|g|^2 |z_1|^{2m(N-1)}$. This term arises nowhere else in the expansion, and hence $(R+r)^N$ is not a squared norm.
If we can find such an $R$ in ${\rm rad}({\mathcal P_\infty})$, we have an example. By Example 2.1,
$R = (|z_2|^2 + |z_3|^2)^4 - \beta |z_2 z_3|^4$ works if $6 < \beta < 8$.
 \end{example}

The function $R=R_\beta$ from this example shows that ${\rm rad} ({\mathcal P_\infty})$ is not closed under limits.
It is easy to see that ${\rm rad} ({\mathcal P_\infty})$ is closed under products.
We next establish most of the containments from Theorem 7.1.

\begin{lemma} ${\rm rad}({\mathcal P}_\infty) \subset {\mathcal P}_2$. 
\end{lemma}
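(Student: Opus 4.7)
The plan is to reduce the $\mathcal{P}_2$ condition to the global Cauchy--Schwarz inequality (13) and then derive (13) from the pointwise Cauchy--Schwarz inequality applied to the squared norm that represents $r^N$. Recall that membership in $\mathcal{P}_2$ for a non-negative Hermitian symmetric polynomial is exactly equivalent to (13): the $2 \times 2$ matrix $(r(z_i, \overline{z_j}))$ is non-negative definite for all pairs $z_1, z_2$ iff its diagonal entries are non-negative and its determinant is non-negative, which is precisely $r \ge 0$ together with (13).

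First I would start from $r \in \mathrm{rad}(\mathcal{P}_\infty)$, which by Definition 2.1 furnishes $r \ge 0$ and an integer $N \ge 1$ together with a holomorphic polynomial mapping $h$ such that
\[
r(z,\overline{z})^N = \|h(z)\|^2 = \sum_j |h_j(z)|^2.
\]
Next I would polarize: the polynomials $r(z,\overline{w})^N$ and $\langle h(z), h(w)\rangle = \sum_j h_j(z)\overline{h_j(w)}$ are both polynomials in $(z,\overline{w})$ that agree on the diagonal $w = z$, so they are equal as polynomials. This is the Hermitian analogue of the identity theorem and is standard for polynomials in $z$ and $\overline{w}$ treated as independent variables.

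Having done this, I would apply the ordinary Cauchy--Schwarz inequality for vectors in $\mathbb{C}^K$ to $h(z)$ and $h(w)$:
\[
|\langle h(z), h(w)\rangle|^2 \le \|h(z)\|^2 \cdot \|h(w)\|^2.
\]
Translating via the polarization identity yields
\[
|r(z,\overline{w})|^{2N} \le \bigl(r(z,\overline{z}) \cdot r(w,\overline{w})\bigr)^N.
\]
Since both sides are non-negative real numbers (using $r \ge 0$), I would extract $N$-th roots to recover (13), namely $|r(z,\overline{w})|^2 \le r(z,\overline{z}) \cdot r(w,\overline{w})$. Combined with $r \ge 0$, this is exactly the $\mathcal{P}_2$ condition on every pair of points, completing the proof.

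There is essentially no serious obstacle here; the only small point worth flagging is the legitimacy of the polarization step, but this is immediate because $r$ is Hermitian symmetric and the equality $r^N(z,\overline{z}) = \|h(z)\|^2$ holds as an equality of polynomials in the real and imaginary parts of $z$, hence after the substitution $(z,\overline{z}) \mapsto (z,\overline{w})$ as an equality in $\mathbb{C}[z,\overline{w}]$.
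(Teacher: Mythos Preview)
Your proof is correct and follows essentially the same approach as the paper: polarize $r^N = \|h\|^2$, apply the ordinary Cauchy--Schwarz inequality to obtain $|r(z,\overline{w})|^{2N} \le (r(z,\overline{z})r(w,\overline{w}))^N$, take $N$-th roots to get (13), and conclude $r \in \mathcal{P}_2$. You are slightly more explicit than the paper about the polarization step and about why (13) together with $r \ge 0$ is equivalent to the $\mathcal{P}_2$ condition (the paper cites the principal minors test), but the argument is the same.
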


\begin{proof} Suppose that $r^N = ||f||^2$.
By the usual Cauchy-Schwarz inequality,
$$ (r(z,{\overline z})r(w,{\overline w}))^N  = ||f(z)||^2 ||f(w)||^2 \ge
|\langle f(z), f(w) \rangle|^2 = |r(z,{\overline w})|^{2N}. \eqno (45) $$
Since $N>0$, we may take $N$-th roots of both sides of (45) and preserve 
the direction of the inequality to obtain (13). By the principal minors test for 
non-negative definiteness, we see that $r\in {\mathcal P}_2$. \end{proof}

\begin{remark} Suppose $r$ satisfies (13). If $r(z,{\overline z}) > 0$
for a single point $z$, then
 $r \in {\mathcal P}_2 \subset {\mathcal P}_1$. Positivity at one point is required.
When $r$ is minus a squared norm, and not identically
zero, (13) holds and $r$ is not in ${\mathcal P}_1$.
If $r(z,{\overline z})$ is positive at one point, then
(13) is equivalent to
$$ r(w,{\overline w}) \ge { |r(z,{\overline w})|^2 \over r(z, {\overline z}) } $$
and hence $r(w, {\overline w})$ is non-negative for all $w$. \end{remark}

\begin{lemma} ${\mathcal P}_2 \subset {\mathcal L}$. \end{lemma}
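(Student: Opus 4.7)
The plan is to prove the inclusion by exhibiting, at every point $z_0$ with $r(z_0,\overline{z_0})>0$, an explicit plurisubharmonic function that touches $\log r$ from below at $z_0$. Once such a tangential PSH minorant is available at each such point, the usual sub-mean-value criterion will force $\log r$ itself to be plurisubharmonic.

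First I would reformulate the Cauchy-Schwarz inequality (13). Fix $z_0$ with $c := r(z_0,\overline{z_0})>0$, and set $h(w) := r(w,\overline{z_0})$. Because $\overline{z_0}$ is held fixed, $h$ is a holomorphic polynomial in $w$. Hermitian symmetry gives $\overline{h(w)} = r(z_0,\overline{w})$, hence $|h(w)|^2 = |r(z_0,\overline{w})|^2$, and (13) rewrites as
$$ c \cdot r(w,\overline{w}) \geq |h(w)|^2 \qquad \text{for all } w \in {\bf C}^n, $$
with equality at $w = z_0$ since $h(z_0)=c$. Taking logarithms yields
$$ \log r(w,\overline{w}) \;\geq\; 2\log|h(w)| - \log c \;=:\; \psi_{z_0}(w), $$
again with equality at $w=z_0$. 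The function $\psi_{z_0}$ is plurisubharmonic because $h$ is holomorphic, which produces the desired tangential PSH minorant.

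To finish, note that $\log r$ is upper semicontinuous on ${\bf C}^n$ valued in $[-\infty,\infty)$, since $r$ is continuous and nonnegative. Fix a complex line $\ell$ through a point $z_0$. If $r(z_0,\overline{z_0})>0$, then the restriction of $\psi_{z_0}$ to $\ell$ is subharmonic, bounded above by $\log r|_\ell$, and equal to $\log r|_\ell$ at $z_0$; the sub-mean-value property of $\psi_{z_0}$ on small circles therefore passes to $\log r$ at $z_0$. If instead $r(z_0,\overline{z_0})=0$, then $\log r(z_0,\overline{z_0}) = -\infty$ and the sub-mean-value inequality is automatic. Combined with upper semicontinuity this verifies plurisubharmonicity of $\log r$, hence $r \in \mathcal{L}$.

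The main obstacle is spotting the reformulation above: once one recognizes that for each fixed $z_0$ the map $w \mapsto r(w,\overline{z_0})$ is an honest holomorphic polynomial whose squared modulus is bounded above by $c \cdot r(w,\overline{w})$ via (13) and Hermitian symmetry, the rest is standard. An alternative route would be to regularize --- $r + \varepsilon$ lies in $\mathcal{P}_2$ by Lemma 2.3, a direct second-order Taylor expansion of (13) at $w = z_0 + \varepsilon v$ gives that $\partial \overline{\partial}\log(r+\varepsilon)$ is non-negative definite, and the decreasing limit $\log(r+\varepsilon) \downarrow \log r$ is PSH --- but the approach above avoids the pointwise calculus and disposes of the zero set of $r$ in one stroke.
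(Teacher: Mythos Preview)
Your proof is correct and takes a genuinely different route from the paper's. The paper argues pointwise: fixing $w$ and viewing (13) as an inequality in $z$ with equality at $z=w$, it extracts the second-order condition at the diagonal, obtaining that the matrix with entries $r\, r_{z_i \overline{z_j}} - r_{z_i} r_{\overline{z_j}}$ is non-negative definite; this is (up to the positive factor $r^{-2}$) precisely the complex Hessian of $\log r$, so $\log r$ is plurisubharmonic where $r>0$.

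Your approach instead globalizes (13) into the inequality $\log r(w,\overline{w}) \ge 2\log |h(w)| - \log c$ and recognizes the right-hand side as a plurisubharmonic function touching $\log r$ from below at $z_0$; the sub-mean-value inequality then transfers immediately. This is cleaner in two respects: it avoids any explicit Hessian computation, and it disposes of the zero set of $r$ in the same stroke (the paper's Hessian argument is tacitly restricted to $\{r>0\}$). On the other hand, the paper's computation makes the curvature interpretation of (13) explicit --- the matrix $r\, r_{z_i \overline{z_j}} - r_{z_i} r_{\overline{z_j}}$ is, up to sign and normalization, the curvature form of the metric $r$ on a line bundle, which is the link to Remark~2.2 and Section~8.
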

\begin{proof} If $r \in {\mathcal P}_2$, then (13) holds.  
Since equality holds when $z=w$, the right-hand side of (13)
has a minimum at $z=w$, and hence its complex Hessian there
 is non-negative definite. Computing the Hessian
shows that the matrix with $i,j$ entry equal to 
$$ r \ r_{z_i {\overline z_j}} - r_{z_i} r_{{\overline z_j}} $$
is non-negative definite. Computing the Hessian of ${\rm log}(r)$ leads to 
the same condition.\end{proof}

We continue to develop a feeling for the Cauchy-Schwarz inequality. 
By (3) there are holomorphic polynomial mappings $f$ and $g$, 
taking values in finite-dimensional
spaces, such that 

$$ r(z,{\overline z}) = ||f(z)||^2 - ||g(z)||^2. $$
We may assume that there are no
linear dependence relations among the components of $f$ and $g$,
but even then the representation is not unique.

In the next Proposition we allow $f$ and $g$ to be Hilbert space valued holomorphic mappings.
In the polynomial case the Hilbert space is finite-dimensional.

\begin{proposition} Suppose that ${\mathcal H}$ is a Hilbert space, and that
$f$ and $g$ are holomorphic mappings to ${\mathcal H}$.
Put 
$$ r(z,{\overline w}) = \langle f(z), f(w) \rangle - \langle g(z), g(w) \rangle \eqno
(46) $$
Then (13) holds if and only if, for every pair of points $z$ and $w$, we have

$$ || f(z) \otimes g(w) - f(w) \otimes g(z) ||^2 $$
$$\le ||f(z)||^2 ||f(w)||^2 - |\langle f(z),f(w) \rangle |^2
+  ||g(z)||^2 ||g(w)||^2 - |\langle g(z),g(w) \rangle |^2  \eqno (47) $$ \end{proposition}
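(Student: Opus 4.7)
The plan is to prove Proposition 7.1 by direct calculation, showing that both sides of the asserted equivalence reduce, after rearrangement, to the same algebraic inequality relating the four scalars $\|f(z)\|^2$, $\|f(w)\|^2$, $\|g(z)\|^2$, $\|g(w)\|^2$ and the two inner products $A := \langle f(z), f(w)\rangle$ and $B := \langle g(z), g(w)\rangle$. No analytic input is needed beyond the definition of the tensor-product inner product and Hermitian symmetry; the whole statement is really an algebraic identity in these six quantities.

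First I would expand the left side of (13). From $r(z,\overline z) = \|f(z)\|^2 - \|g(z)\|^2$ and the analogous expression at $w$, the product $r(z,\overline z)\, r(w,\overline w)$ becomes $\|f(z)\|^2\|f(w)\|^2 - \|f(z)\|^2\|g(w)\|^2 - \|g(z)\|^2\|f(w)\|^2 + \|g(z)\|^2\|g(w)\|^2$. Next I would expand $|r(z,\overline w)|^2 = |A-B|^2 = |A|^2 - A\overline B - \overline A B + |B|^2$. Subtracting, inequality (13) becomes
\begin{equation*}
\bigl(\|f(z)\|^2\|f(w)\|^2 - |A|^2\bigr) + \bigl(\|g(z)\|^2\|g(w)\|^2 - |B|^2\bigr) \;\ge\; \|f(z)\|^2\|g(w)\|^2 + \|g(z)\|^2\|f(w)\|^2 - A\overline B - \overline A B.
\end{equation*}

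Next I would compute the left side of (47) using $\langle a\otimes b, c\otimes d\rangle = \langle a,c\rangle\langle b,d\rangle$ and the identifications $\langle g(w),g(z)\rangle = \overline B$, $\langle f(w),f(z)\rangle = \overline A$. This yields
\begin{equation*}
\|f(z)\otimes g(w) - f(w)\otimes g(z)\|^2 = \|f(z)\|^2\|g(w)\|^2 + \|f(w)\|^2\|g(z)\|^2 - A\overline B - \overline A B.
\end{equation*}
Substituting this expression for the left side of (47) shows that (47) is literally the same inequality as the rearranged form of (13) displayed above. Hence (13) and (47) are equivalent pointwise in $(z,w)$, which gives the proposition.

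The calculation is short and essentially forced, so the only potential obstacle is bookkeeping: keeping track of which factor is conjugated when swapping the roles of $z$ and $w$ in the mixed inner products $\langle g(w),g(z)\rangle$ versus $\langle g(z),g(w)\rangle$, and similarly for $\langle f(w),f(z)\rangle$. Once these conjugations are recorded correctly, the two cross terms on each side match exactly and the equivalence drops out. I would organize the write-up as two identities (one for (13), one for (47)), followed by the one-line observation that they coincide.
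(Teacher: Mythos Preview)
Your proposal is correct and follows exactly the approach indicated in the paper: expand (13) using the expression (46), expand the tensor-product squared norm on the left of (47) via the identity $\langle u_1\otimes v_1, u_2\otimes v_2\rangle = \langle u_1,u_2\rangle\langle v_1,v_2\rangle$, and observe that the two resulting inequalities coincide. The paper's own proof is only a two-sentence sketch of precisely this computation, and you have filled in the details accurately, including the correct handling of the conjugations in the cross terms.
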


\begin{proof} Begin by using (46) to express (13) in terms of $f$ and $g$.
The resulting inequality is then seen to be equivalent to (47).
To see this, expand the squared norm on the left side
of (47), and use the identity 
$ \langle u_1 \otimes v_1, u_2 \otimes v_2 \rangle = 
\langle u_1, u_2 \rangle \langle v_1, v_2 \rangle $. \end{proof}

\medskip
Consider the right side of (47); the two terms involving $f$, and the two terms
involving $g$, are each non-negative by the usual
Cauchy-Schwarz inequality. Their sum is thus non-negative. Version (47) of the Cauchy-Schwarz
inequality demands more; their sum must 
bound an obviously non-negative expression that reveals the symmetry of
the situation. The left side of (47) has the interpretation
as $||(f \wedge g)(z,w)||^2$, but the author does not know how to use this
information.

It remains to discuss the first containment in (44). It follows from results in [CR3] and [V].
These results, which are expressed in terms of metrics on line bundles, involve strict forms
of (13). They imply, when $r$ is bihomogeneous and satisfies a strict form of (13),
that $R \in {\rm rad} ({\mathcal P_\infty})$. The strict forms of (13) are open conditions on the coefficients,
and hence we obtain the first containment in (44).

\section{Isometric imbedding for holomorphic bundles}

Let $r$ be a bihomogeneous polynomial
that is positive away from the origin in ${\bf C^{N+1}}$.
The link to bundles arises by first
considering complex projective space ${\bf P_N}$, the collection
of lines through the origin in
${\bf C^{N+1}}$. We have the usual open covering
given by open sets $U_j$ where $z_j \ne 0$. In $U_j$ we define
$f_j$ by

$$ f_j(z,{\overline z}) = { r(z, {\overline z}) \over |z_j|^{2m} }.  \eqno (48) $$
On the overlap $U_j \cap U_k$ these functions then transform via

$$ f_k = |({z_j \over z_k})^m|^2 f_j . \eqno (49) $$
Since $({z_j \over z_k})^m$ are the transition functions for the $m$-th
power of the universal line bundle ${\bf U}^m$, the functions $f_j$ determine
a Hermitian
metric on ${\bf U}^m$.

We will reformulate
Theorem 3.1 in this language and then generalize it.

Let $r$ be a bihomogeneous polynomial of degree $2m$.
It defines via (48) a metric on ${\bf U}^m$
if and only if it is positive as a function away from the origin.
This metric is already a pullback
of the Euclidean metric if and only if $r \in {\mathcal Q}$. 
Some tensor power of the bundle with itself is a pullback
if and only if $r \in {\rm rad}({\mathcal P}_\infty)$. If $r \in {\mathcal P}_2$, then $r \in {\mathcal L}$. 
This condition is equivalent
to the negativity of the curvature of the bundle, or to the pseudoconvexity
of the unit ball in the total space of the bundle.

The previous paragraph applies in particular to
the function $r_\lambda$ from Example 2.1.
When $ \lambda < 16$, $r_\lambda$ 
is strictly positive away from the origin,
and hence defines a metric on ${\bf U}^4$ over ${\bf P_1}$.
By varying the parameter $\lambda$ we see that the various positivity properties
of bundle metrics are also distinct.

We next restate Theorem 3.1.

\begin{theorem} Let $({\bf U}^m, r)$ denote 
the $m$-th power of the universal line bundle over
${\bf P_n}$ with metric defined by $r$.
Then there are integers $N$ and  $d$ such that $({\bf U}^{m+d}, ||z||^{2d} r(z, {\overline z}))$
is a (holomorphic) pullback $g^*({\bf U}, ||L(\zeta)||^2)$
of the standard metric on the universal bundle over  ${\bf P_N}$. 
The mapping $g: {\bf P_n} \to {\bf P_N}$ is a holomorphic (polynomial) embedding and
$L$ is an invertible linear mapping.

$$ ({\bf U}^m, r) \otimes ({\bf U}^d, ||z||^{2d}) = 
({\bf U}^{m+d}, ||z||^{2d} r(z,{\overline z})) = ({\bf U}^{m+d}, ||g(z)||^2) $$
We have the bundles and metrics $$\pi_1 : ({\bf U}^m,r) \to {\bf P_n}$$
$$\pi_2 : ({\bf U}^{m+d},||z||^{2d} r) \to {\bf P_n}$$
$$\pi_3 : ({\bf U},||L(\zeta)||^2) \to {\bf P_N}$$

Thus $\pi_1$ is not an isometric pullback of $\pi_3$, but, for sufficiently large $d$,
$\pi_2$ {\it is} such a pullback. \end{theorem}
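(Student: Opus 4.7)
The plan is to show that this theorem is essentially a bundle-theoretic reformulation of Theorem 3.1, together with a functoriality observation. I would start by unpacking the metric side. Given a bihomogeneous $r$ of degree $2m$ that is positive away from the origin, formula (48) exhibits a local collection of positive functions $f_j$ on charts $U_j \subset {\bf P}_n$ transforming by (49), that is, by $|z_j/z_k|^{2m}$. These are exactly the transition functions for sections of the dual of ${\bf U}^m$, so the $\{f_j\}$ assemble into a smooth Hermitian metric on ${\bf U}^m$. Similarly, $\|z\|^{2d}$ defines a (standard, globally smooth) metric on ${\bf U}^d$, and the pointwise product defines the tensor-product metric on ${\bf U}^m \otimes {\bf U}^d = {\bf U}^{m+d}$; this gives the first equality $({\bf U}^m,r)\otimes ({\bf U}^d,\|z\|^{2d}) = ({\bf U}^{m+d},\|z\|^{2d} r)$ essentially by definition.

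Next I would invoke item 4) of Theorem 3.1: since $r$ is positive on the sphere (equivalently away from the origin, by bihomogeneity), there exist an integer $d$ and a homogeneous holomorphic polynomial mapping $g$ of degree $m+d$ with ${\bf V}(g) = \{0\}$ such that $\|z\|^{2d} r(z,\bar z) = \|g(z)\|^2$. Writing $g = (g_0,\dots,g_N)$, the condition ${\bf V}(g)=\{0\}$ means that on ${\bf P}_n$ the components never simultaneously vanish, so $g$ descends to a well-defined holomorphic map $g:{\bf P}_n \to {\bf P}_N$. By item 2) of Theorem 3.1, we may further arrange that the underlying matrix of $\|z\|^{2d} r$ is positive definite, so the components of $g$ are linearly independent; after passing to an invertible linear $L$ we can take them to be an orthonormal set relative to $L$, which is precisely the condition that makes $g$ pull back the standard metric $\|L(\zeta)\|^2$ on ${\bf U}$ over ${\bf P}_N$ to the squared norm $\|g(z)\|^2$.

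The remaining step is to verify that $g:{\bf P}_n \to {\bf P}_N$ is actually an embedding and that the pullback identity is the bundle pullback of $\pi_3$, not just a pointwise equality. For the pullback identity, observe that a section $\zeta_j$ of ${\bf U}^{*}$ over ${\bf P}_N$ pulls back under $g$ to $g_j(z)$, so the Euclidean-type metric $\|L(\zeta)\|^2$ on ${\bf U}$ pulls back fiber-by-fiber to $\|g(z)\|^2 = \|z\|^{2d} r(z,\bar z)$, exhibiting $\pi_2$ as the holomorphic pullback $g^* \pi_3$. To see that $g$ is an embedding, note that for sufficiently large $d$ the linear system $\{g_0,\dots,g_N\}$ contains all monomials of degree $m+d$ (one can increase $d$ by multiplying $g$ by $z^{\otimes 1}$, which preserves all required properties), and the $d$-uple (Veronese) map in degree $m+d$ is a projective embedding of ${\bf P}_n$; a generically invertible composition with a linear embedding of the image stays an embedding.

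I expect the main subtlety to be the last step, namely ensuring $g$ is genuinely an embedding (injective and immersive) rather than merely a holomorphic map with finite fibers, because Theorem 3.1 item 4) only guarantees ${\bf V}(g)=\{0\}$. The cleanest way to handle this is to replace the $d$ given by Theorem 3.1 with $d+d'$ for $d'$ large enough that the components of $z^{\otimes d'}\otimes g$ span all degree-$(m+d+d')$ monomials, thereby dominating the Veronese embedding; since enlarging $d$ preserves positivity and only increases the ambient $N$, nothing else in the statement is affected.
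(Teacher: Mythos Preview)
Your proposal is correct and matches the paper's intent: Theorem 8.1 is explicitly introduced as a restatement of Theorem 3.1, and the paper gives no separate proof. Your unpacking of the metric via (48)--(49), the tensor identity, and the appeal to items 2) and 4) of Theorem 3.1 are exactly what is implicit in the paper.

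One simplification: your last paragraph on the embedding is more work than needed. Item 2) of Theorem 3.1 says the coefficient matrix $(E_{\mu\nu})$ of $\|z\|^{2d} r$ is \emph{positive definite}, not merely that the components of $g$ are linearly independent. Positive definiteness means the factorization $E=A^*A$ has $A$ invertible and square, so $g(z)=A\,z^{\otimes(m+d)}$ is literally the degree-$(m+d)$ Veronese map composed with an invertible linear map. The embedding property is then immediate, and the invertible $L$ in the statement is precisely this $A$ (or its inverse, depending on conventions). There is no need to enlarge $d$ further or to tensor with $z^{\otimes d'}$.
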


This formulation suggests a generalization
to more general Hermitian bundles.
See [CD3] for the precise definitions
of globalizable metric and the needed sharp form of inequality (13). See [V] for an improved exposition that allows
for degenerate metrics.
A version of (13) arises also in Calabi's work [Cal] on isometric
imbeddings of the tangent bundle.
The main result of [CD3] is the following isometric imbedding theorem
for holomorphic bundles.

\begin{theorem} Let $M$ be a compact complex manifold. 
Let $E$ be a vector bundle of rank $p$ over $M$ with globalizable
Hermitian metric $G$. Let $L$ be a line bundle over $M$ with
globalizable Hermitian metric $R$, and suppose that $L$ is negative and
that $R$ satisfies a sharp form of (13). Then there is an integer $d_0$ such that, for
all $d$ with $d \ge d_0$, there is a holomorphic imbedding $h_d$ with
$h_d : M \to {\bf G}_{p,N}$ such that
$ E \otimes L^d = h_d^* {\bf U}_{p,N} $, and $ G R^d = h_d ^* (g_0) $. \end{theorem}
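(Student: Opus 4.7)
The plan is to mimic the proof of Theorem 3.1, replacing the unit ball $B_n$ by the unit disk bundle $\Omega$ inside the total space of $L^{-1}$, and replacing homogeneous polynomials on ${\bf C}^{n+1}$ by holomorphic sections of $E \otimes L^d$, regarded as holomorphic functions on $\Omega$ that are homogeneous of the appropriate degree along the fibers. Because $L$ is negative, $L^{-1}$ is positive and $\Omega$ is a strongly pseudoconvex domain in a complex manifold; this plays the role of $B_n$. The metric $G R^d$ on $E \otimes L^d$ induces an $L^2$ inner product on $H^0(M, E \otimes L^d)$ whose Bergman kernel encodes the data of the isometric imbedding we seek.

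First I would construct the map $h_d$. For $d$ sufficiently large, a Kodaira-type vanishing-and-generation argument shows that global sections of $E \otimes L^d$ generate each fiber; choosing a basis $s_1, \dots, s_N$ of $H^0(M, E\otimes L^d)$, the evaluation ${\bf C}^N \to (E \otimes L^d)_x$ is surjective, and sending $x$ to the kernel of this map produces a holomorphic imbedding of $M$ into ${\bf G}_{p,N}$ whose pullback of ${\bf U}_{p,N}$ is canonically $E \otimes L^d$. This is the bundle-theoretic analogue of extracting the map $g$ in item 4) of Theorem 3.1.

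The remaining isometric statement reduces to proving that the Hermitian form on $H^0(M, E\otimes L^d)$ induced by $G R^d$ coincides with the one pulled back from the standard metric $g_0$ on ${\bf U}_{p,N}$, equivalently that a certain integral operator on $\Omega$ is positive. Following the proof of Theorem 3.1, I would study the operator whose kernel is the Bergman kernel weighted by $R(z,\overline{w})$ (and by $G$ in the bundle factor), add and subtract $R(z,\overline{z})$ together with a smooth cutoff as in formulas (27)--(30), and decompose the result into a positive operator plus a compact one. The sharp form of inequality (13) satisfied by $R$ is the global replacement for the strict positivity of $r$ on the sphere in Theorem 3.1: it forces the diagonal piece (the analogue of $S_2$) to be strictly positive on the relevant section space, while the off-diagonal piece (the analogue of $S_1$) becomes a finite sum of compositions of bounded operators with commutators of the Bergman projection and multiplication operators, hence compact. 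For $d$ large enough the positive part dominates outside a finite-dimensional subspace, and together with the generation statement above this yields $G R^d = h_d^*(g_0)$.

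The main obstacle is establishing the compactness of the commutators $[P, M_\phi]$ for the Bergman projection $P$ on the strongly pseudoconvex $\Omega \subset L^{-1}$, in the bundle-valued setting. In Theorem 3.1 this input was cited from [CD2] and [Str] on the ball; here one must invoke the analogue on $\Omega$, which follows from compactness estimates for the ${\overline \partial}$-Neumann problem on strongly pseudoconvex domains but requires care in the bundle-valued category involving the globalizable metric $G$ on $E$. A related subtlety is formulating ``sharp form of (13)'' globally enough that the diagonal term is uniformly positive on all of $M$; this is precisely what the notion of globalizable metric from [CD3] is designed to furnish, and checking that the hypotheses feed cleanly into that framework is where most of the bookkeeping will lie.
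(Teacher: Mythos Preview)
The paper does not actually prove this theorem; it is stated as ``the main result of [CD3]'' and simply cited, with the reader referred to [CD3] for the precise definitions of globalizable metric and the sharp form of (13), and to [V] for an improved exposition. So there is no proof in the paper to compare your proposal against.

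That said, your outline is broadly in the spirit of how [CD3] proceeds, as one can infer from the paper's presentation of Theorem 3.1: the disk bundle in $L^{-1}$ plays the role of $B_n$, sections of $E\otimes L^d$ replace homogeneous polynomials, and the positivity-plus-compactness decomposition of a Bergman-type operator is the engine. Your identification of the two delicate points---compactness of $[P,M_\phi]$ on the strongly pseudoconvex disk bundle in the bundle-valued setting, and the precise global meaning of the sharp Cauchy--Schwarz hypothesis---is accurate; these are exactly the places where the argument requires substantive work beyond the ball case, and the paper explicitly defers both to [CD3]. One caution: your sketch conflates the $L^2$ inner product induced by $GR^d$ with the pullback of $g_0$ a bit too quickly; the actual mechanism is that positivity of the relevant operator on the $d$-th graded piece yields a factorization analogous to (50), from which the isometric statement is read off, rather than an \emph{a priori} identification of two inner products.
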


The special case where the base manifold is  complex projective space
${\bf P}_{n-1}$ gives us Theorem 3.3. We let $E$ be a power ${\bf U}^m$ 
of the universal bundle, with metric determined by the
bihomogeneous polynomial $r$, and we let $L$ be the universal bundle
${\bf U}$ with the Euclidean metric.  A matrix analogue
of Theorem 3.1 holds, where $E$ is the bundle of rank $k$ given by
$k$ copies of ${\bf U}^m$. See [D3] for details.

\begin{corollary} Let $M(z, {\overline z})$ be
a matrix of bihomogeneous polynomials of the same degree that
is positive-definite away from the origin. 
Suppose $R$ is a bihomogeneous polynomial that is
positive away from the origin and satisfies a sharp form of (13).
Assume also that $\{ R <1 \}$ is a strongly pseudoconvex
domain. 
Then there is an integer $d$ and a matrix $A$ of holomorphic
homogeneous polynomials such that 
$$ R(z,{\overline z})^d M(z, {\overline z}) = A(z)^* A(z). \eqno (50) $$ 
In particular we can choose $R(z,{\overline z}) = ||z||^{2d}$. \end{corollary}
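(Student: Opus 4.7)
The plan is to deduce Corollary 8.1 from the isometric embedding theorem (Theorem 8.2) by interpreting $M$ and $R$ as Hermitian data on holomorphic bundles over $\mathbf{P}_{n-1}$, in the dictionary set up by equations (48)--(49).

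Suppose the entries of $M$ are bihomogeneous of degree $2m$ and $R$ is bihomogeneous of degree $2e$. Applying (48) entrywise to $M$, set $G^{(j)} = M / |z_j|^{2m}$ on each affine chart $U_j \subset \mathbf{P}_{n-1}$; the transformation rule (49) shows that $G = \{G^{(j)}\}$ defines a Hermitian metric on the rank-$k$ bundle $E = (\mathbf{U}^m)^{\oplus k}$, and the hypothesis that $M$ is positive-definite away from the origin translates exactly to positive-definiteness of $G$ on each fiber, so $G$ is a genuine metric. Similarly $R$ defines a metric on the line bundle $L = \mathbf{U}^e$. Since $\mathbf{U}$ is the tautological bundle (its dual is ample), $L$ is negative. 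The two remaining hypotheses on $R$, namely the sharp form of (13) and the strong pseudoconvexity of $\{R<1\}$, are precisely the technical input required by Theorem 8.2.

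Theorem 8.2 then yields $d_0$ such that, for every $d \geq d_0$, there is a holomorphic embedding $h_d : \mathbf{P}_{n-1} \to \mathbf{G}_{k,N}$ with
\[
E \otimes L^d = h_d^* \mathbf{U}_{k,N}, \qquad G \, R^d = h_d^* (g_0).
\]
Since $L^d$ is a sufficiently positive tensor power of the universal line bundle, the $k$ sections defining $h_d$ can be realized by vectors of homogeneous holomorphic polynomials on $\mathbf{C}^n$; assembling them into a matrix $A(z)$ with $k$ columns, the pulled-back Grassmannian inner product $h_d^*(g_0)$ becomes the Gram matrix $A(z)^* A(z)$. Rehomogenizing the chart identity $G R^d = h_d^*(g_0)$ (multiply through by $|z_j|^{2(de+m)}$ on $U_j$ and observe that the resulting polynomial identity is independent of chart) gives the global identity $R(z,\overline z)^d\, M(z,\overline z) = A(z)^* A(z)$. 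The special case $R = \|z\|^2$ corresponds to the Euclidean metric on $\mathbf{U}$: here $\mathbf{U}$ is negative, $\{\|z\|^2 < 1\}$ is the ball (strongly pseudoconvex), and the sharp form of (13) is classical, so all hypotheses are met.

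The main obstacle is not conceptual but bookkeeping. One must verify that the abstract sections produced by Theorem 8.2 are genuinely represented by \emph{homogeneous polynomials}, rather than merely by holomorphic sections of an abstract bundle, and that this identification intertwines the abstract metric on $\mathbf{U}_{k,N}$ with the concrete Euclidean inner product used to write $A^*A$. This relies on the standard identification of global sections of a sufficiently positive power of $\mathbf{U}^{-1}$ with homogeneous polynomials on $\mathbf{C}^n$. One must also track bihomogeneous degrees so that both sides of $R^d M = A^* A$ are bihomogeneous of the same total degree $2(de+m)$, and confirm that the result is the matrix analogue of Theorem 3.1 rather than the scalar one, precisely as discussed in [CD3].
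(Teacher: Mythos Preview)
Your approach is exactly the one the paper indicates: it states the corollary immediately after the remark that ``A matrix analogue of Theorem 3.1 holds, where $E$ is the bundle of rank $k$ given by $k$ copies of ${\bf U}^m$,'' deriving (50) from Theorem~8.2 by taking $M$ as a metric on $({\bf U}^m)^{\oplus k}$ and $R$ as a metric on a power of ${\bf U}$, and refers to [D3] for details. Your write-up is more explicit than what the paper provides; one small wording issue is that $L^d$ is increasingly \emph{negative}, not positive, and the map $h_d$ to the Grassmannian is specified by an $N\times k$ frame rather than by ``$k$ sections,'' but these do not affect the argument.
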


A matrix $C$ is positive definite
if and only if its components $c_{ij}$ can be expressed as
inner products $\langle e_i, e_j \rangle$ of basis elements;
we can thus factor a positive definite matrix of constants
as $C= A^*A$. When the entries depend real-analytically on parameters
it is generally impossible to make $A$ depend holomorphically on these
parameters. Writing an operator-valued real-analytic function as in the right side
of (50) is called {\it holomorphic factorization}.
See [RR] for classical results about holomorphic factorization
of operator-valued holomorphic functions of one complex variable.
In the situation of Corollary 9.1, 
one cannot factor $M$ holomorphically, 
but one can factor
$R^d M$ holomorphically when $d$ is sufficiently large.

\section{signature pairs}

Let $r$ be a Hermitian symmetric polynomial with ${\bf s}(r) = (A,B)$. The underlying matrix of coefficients of $r$
is diagonal if and only if we can write 
$$ r = \sum_\alpha  c_\alpha |z|^{2 \alpha} = \sum_{\alpha} c_\alpha |z_1|^{2 \alpha_1} |z_2|^{2 \alpha_2} ... |z_n|^{2 \alpha_n}. $$
Define the moment map ${\bf m}$ by
$$ z \to x = (x_1,...,x_n) = (|z_1|^2,...,|z_n|^2) = {\bf m}(z). \eqno (51) $$
Thus in the diagonal case there is a (real) polynomial $R$ in $x$ such that
$$ r(z,{\overline z}) = R({\bf m}(z)) = R(x). \eqno (52) $$
Then $R$ has $A$ positive coefficients and $B$ negative coefficients.

The relationship between the diagonal case
and the general case parallels the relationship between Theorem 3.2 and Theorem 3.1. 
We show next that the special diagonal situation suffices for finding examples of maximal collapsing of rank.

\begin{lemma} Assume $m \ge 2$. For $t \in {\bf R}$ put $p(t) = t^{2^m} + 1$. Then there is a polynomial $q(t)$ such that
\begin{itemize}
\item All $2^{m-1}+ 1$ coefficients of $q$ are positive.

\item $p(t) = q(t) q(-t)$
\end{itemize} \end{lemma}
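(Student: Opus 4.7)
My plan is to produce $q$ explicitly from the complex roots of $p(t) = t^{2^m}+1$ and then verify positivity by inspecting the quadratic factors obtained from pairing conjugate roots. The roots of $p$ are the $2^m$-th roots of $-1$, namely $\alpha_k = e^{i\pi(2k+1)/2^m}$ for $k = 0, 1, \ldots, 2^m - 1$. For $m \ge 1$ none of them lies on the imaginary axis, and the set of roots is closed under both complex conjugation (which preserves ${\rm Re}(\alpha)$) and negation (which flips its sign). Consequently the $2^m$ roots split into two equal halves of size $2^{m-1}$: those in the open left half-plane and those in the open right half-plane.

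The central construction is to let $q$ be the monic polynomial whose roots are exactly those $\alpha_k$ with ${\rm Re}(\alpha_k) < 0$. Since the open left half-plane is closed under conjugation, $q$ has real coefficients and degree $2^{m-1}$. The roots of $q(-t)$ are the negatives of those of $q$, so they lie in the right half-plane; thus the multiset of roots of $q(t)q(-t)$ coincides with the set of roots of $p$. As $q(t)q(-t)$ is monic of degree $2^m$, the identity $q(t)q(-t) = t^{2^m}+1$ follows.

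For positivity of coefficients, I pair each conjugate pair of roots to write
$$ q(t) = \prod_k \bigl( t^2 + b_k\, t + 1 \bigr), \qquad b_k = -2\cos\theta_k, \quad \theta_k = \frac{\pi(2k+1)}{2^m}, $$
the product running over the $2^{m-2}$ representatives of conjugate pairs in the left half-plane. There $\theta_k \in (\pi/2, 3\pi/2)$, hence $\cos\theta_k < 0$ and $b_k > 0$. A product of polynomials whose coefficients are all strictly positive again has strictly positive coefficients, so every one of the $2^{m-1}+1$ coefficients of $q$ is positive.

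The step I expect to be the main obstacle is precisely this last one, and the key is the correct choice of half-plane: picking roots in the right half-plane instead also gives $q(t)q(-t) = p(t)$, but then $\cos\theta_k > 0$ and the quadratic factors acquire a negative linear coefficient, destroying positivity of the product. The hypothesis $m \ge 2$ enters exactly here, to guarantee that the left half-plane actually contains roots of $p$; for $m = 1$ the two roots of $t^2 + 1$ are $\pm i$, which are purely imaginary and the construction breaks down.
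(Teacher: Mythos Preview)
Your proof is correct and follows essentially the same route as the paper: both define $q$ as the product over the roots of $p$ lying in the open left half-plane, pair conjugate roots into quadratics $t^2 - 2\cos\theta\, t + 1$ with $\cos\theta<0$, and conclude positivity of all coefficients from the positivity of each factor. (One small slip: your phrase ``for $m\ge 1$ none of them lies on the imaginary axis'' should read $m\ge 2$, as you yourself note at the end; this does not affect the argument since the hypothesis is $m\ge 2$.)
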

\begin{proof} Regard $t$ as a complex variable. Put $\omega = e^{ \pi i \over 2^m}$.
The roots of $p$ occur when $t$ is a $2^m$-th root of $-1$, and hence are odd powers of $\omega$. 
Factor $p$ into linear factors:
$$ p(t) = \prod_j (t-\omega^{2j+1}). $$
The roots are symmetrically located in the four quadrants. 
We define $q$ by taking the product over the terms where ${\rm Re}(\omega^{2j+1}) < 0$.
Each such factor has a corresponding conjugate factor. Hence
$$ q(t) = \prod (t^2 - 2 {\rm Re}(\omega^{2j+1})t + 1), $$
and all the coefficients of $q$ are positive. The remaining terms in the factorization of $p$ define $q(-t)$, 
and the result follows. \end{proof}

We illustrate Lemma 9.1 with an example. Set $a=\sqrt{ 4 \pm 2 \sqrt{2}}$. 
Then
$$ t^8 + 1 = (t^4 + a t^3 + {a^2 \over 2} t^2 +  a t + 1) (t^4 - a t^3 + {a^2 \over 2} t^2 - a t + 1). \eqno (53) $$

Bihomogenization leads to the following result from [DL].

\begin{proposition} There are Hermitian symmetric polynomials
$q$ and $r$ such that the following hold:
\begin{itemize} 
\item $q$ and $r$ are each bihomogeneous of total degree $2^m$. 

\item ${\bf s}(q) = (2^{m-1} + 1, 0)$.

\item ${\bf s}(r) = (2^{m-2} +1, 2^{m-2})$.

\item ${\bf s}(qr) = (2, 0)$.
\end{itemize}
\end{proposition}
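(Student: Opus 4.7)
The plan is to take the one-variable factorization from Lemma 9.1 and lift it to two complex variables via the moment map $(z_1, z_2) \mapsto (|z_1|^2, |z_2|^2)$, exactly in the spirit of the discussion following (52). Because every monomial that appears is of the form $|z_1|^{2a}|z_2|^{2b}$, the underlying coefficient matrix is diagonal, so the signature pair is read off directly by counting the signs of the coefficients.

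First I would apply Lemma 9.1 to obtain a one-variable polynomial $\widetilde q(t)$ of degree $2^{m-1}$, with all $2^{m-1}+1$ coefficients strictly positive, such that $\widetilde q(t)\,\widetilde q(-t) = t^{2^m} + 1$. Then I would bihomogenize to define
$$q(z,\bar z) \;=\; |z_2|^{2\cdot 2^{m-1}}\,\widetilde q\!\left(\tfrac{|z_1|^2}{|z_2|^2}\right), \qquad r(z,\bar z) \;=\; |z_2|^{2\cdot 2^{m-1}}\,\widetilde q\!\left(-\tfrac{|z_1|^2}{|z_2|^2}\right),$$
each of which is a Hermitian symmetric polynomial on $\mathbf{C}^2$ bihomogeneous of total degree $2^m$.

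Next I would compute the signatures. Writing $\widetilde q(t) = \sum_{k=0}^{2^{m-1}} c_k t^k$ with all $c_k > 0$, the coefficient matrix of $q$ is diagonal with entries $c_k$ for $k = 0, \ldots, 2^{m-1}$; this gives $\mathbf{s}(q) = (2^{m-1}+1, 0)$. The coefficient matrix of $r$ is diagonal with entries $(-1)^k c_k$. Since $m \ge 2$ makes $2^{m-1}$ even, the even indices $k \in \{0, 2, \ldots, 2^{m-1}\}$ produce $2^{m-2}+1$ positive entries and the odd indices produce $2^{m-2}$ negative ones, giving $\mathbf{s}(r) = (2^{m-2}+1, 2^{m-2})$.

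Finally, the product collapses by construction:
$$qr \;=\; |z_2|^{2^{m+1}}\,\widetilde q\!\left(\tfrac{|z_1|^2}{|z_2|^2}\right)\widetilde q\!\left(-\tfrac{|z_1|^2}{|z_2|^2}\right) \;=\; |z_2|^{2^{m+1}}\!\left(\tfrac{|z_1|^{2\cdot 2^m}}{|z_2|^{2\cdot 2^m}} + 1\right) \;=\; |z_1|^{2^{m+1}} + |z_2|^{2^{m+1}},$$
so the coefficient matrix of $qr$ has exactly two positive diagonal entries, giving $\mathbf{s}(qr) = (2,0)$. There is no real obstacle here beyond Lemma 9.1 itself; the only thing to be careful about is the parity count giving the precise split $2^{m-2}+1$ versus $2^{m-2}$ in the signature of $r$, which relies on $2^{m-1}$ being even (hence the hypothesis $m \ge 2$).
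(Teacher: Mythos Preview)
Your proposal is correct and is precisely the argument the paper has in mind: the text just says ``Bihomogenization leads to the following result from [DL],'' and you have written out that bihomogenization explicitly via the moment-map substitution $t = |z_1|^2/|z_2|^2$, counting signs on the resulting diagonal coefficient matrices. There is nothing to add.
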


\begin{corollary} For each integer $k$ of the form $2^{m-1} +1 $, there exists $r \in {\mathcal Q}$ of rank $k$
such that $||g||^2 r = ||f||^2$ and $||f||^2$ has rank $2$. \end{corollary}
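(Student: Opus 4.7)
The plan is to deduce this corollary directly from Proposition 9.1 by renaming. Fix $m \ge 2$ and set $k = 2^{m-1} + 1$. Proposition 9.1 furnishes two bihomogeneous Hermitian symmetric polynomials; I temporarily call them $Q$ and $R$, where ${\bf s}(Q) = (2^{m-1}+1, 0)$ and ${\bf s}(R) = (2^{m-2}+1, 2^{m-2})$, with ${\bf s}(QR) = (2,0)$. Taking $r := R$, its rank is $(2^{m-2}+1) + 2^{m-2} = 2^{m-1}+1 = k$, exactly as required.

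Next I would invoke Proposition 1.2 twice to pass from signature data to holomorphic decompositions. Because ${\bf s}(Q) = (2^{m-1}+1, 0)$, there is a holomorphic polynomial mapping $g$, with $2^{m-1}+1$ linearly independent components, such that $Q = ||g||^2$. Because ${\bf s}(QR) = (2,0)$, there is a holomorphic polynomial mapping $f$ with two linearly independent components such that $QR = ||f||^2$. Combining these identities yields $||g||^2 \, r = ||f||^2$, with $||f||^2$ of rank exactly $2$.

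It remains to confirm that $r \in {\mathcal Q}$. Since $||g||^2 \not\equiv 0$, the identity $r = ||f||^2 / ||g||^2$ exhibits $r$ as a quotient of squared norms, so $r \in {\mathcal Q}$ directly from item 4 of Definition 2.1. Alternatively, the same identity shows that $r$ divides the nonzero squared norm $||f||^2$, placing $r$ in ${\mathcal Q}'$, and then $r \in {\mathcal Q}$ by Theorem 5.3.

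There is essentially no substantive obstacle at the level of this corollary: all the real work is packaged inside Proposition 9.1, which in turn rests on Lemma 9.1 (the factorization of $t^{2^m}+1$ into two polynomials with only positive coefficients) together with bihomogenization via the moment map. The only point deserving a moment of care is insisting that $||f||^2$ have rank \emph{exactly} $2$ rather than merely at most $2$; this is guaranteed by the signature pair ${\bf s}(QR) = (2,0)$ combined with the linear independence clause in Proposition 1.2.
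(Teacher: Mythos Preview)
Your proposal is correct and matches the paper's approach: the corollary is an immediate consequence of Proposition 9.1, obtained by taking the $r$ there (which has rank $2^{m-1}+1$), writing $q=\|g\|^2$ since ${\bf s}(q)=(2^{m-1}+1,0)$, and writing $qr=\|f\|^2$ since ${\bf s}(qr)=(2,0)$. The paper does not give a separate formal proof, but the paragraph following the corollary spells out exactly this reasoning; your invocation of Theorem 5.3 as an alternative route to $r\in{\mathcal Q}$ is unnecessary but harmless.
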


Consider this proposition and corollary from the point of view of starting with $r$. It is a non-negative Hermitian polynomial
with signature pair $(2^{m-2}+1, 2^{m-2})$ and rank $2^{m-1} + 1$. 
By Proposition 9.1, $r$ is a quotient of squared norms, where
the rank of the numerator is $2$. For example, (53)
provides an example of a polynomial $r \in {\mathcal Q}$ whose signature pair of $r$ is $(3,2)$.
The rank of the numerator is $2$ and the rank of the denominator is $5$. The point of the Corollary is that by
choosing larger values of $m$, we can make ${\bf r}(qr)=2$, while
the ranks of the factors are arbitrarily large. This phenomenon illustrates the same warning required
in our discussion near (9.1) of Pfister's theorem in the real case. 

The next result shows that we cannot decrease the rank to $1$. On the other hand,
its conclusion is false for real-analytic Hermitian symmetric functions. Consider the identity $1 = e^{||z||^2} e^{-||z||^2}$.
If we expand the exponential as a series, then the signature pairs of the factors would be $(\infty,0)$ and $(\infty, \infty)$. Yet their product
has signature pair $(1,0)$. We return to the polynomial case.

\begin{proposition} ([DL]) Let $p$ and $q$ be Hermitian symmetric polynomials with ${\bf r}(pq)=1$.
Then ${\bf r}(p) = {\bf r}(q) = 1$. \end{proposition}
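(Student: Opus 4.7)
The plan is to exploit unique factorization in the polarized polynomial ring and then use Hermitian symmetry. Throughout, treat $z=(z_1,\dots,z_n)$ and $\bar w=(\bar w_1,\dots,\bar w_n)$ as $2n$ independent variables, so that we work in the UFD $R=\mathbb{C}[z,\bar w]$.

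First I would unpack the hypothesis. Saying that $pq$ has rank one means its coefficient matrix is a rank-one Hermitian matrix, hence equals $\epsilon\, e e^{*}$ with $\epsilon\in\{+1,-1\}$ and a nonzero vector $e$. Translating this to polynomials gives a nonzero holomorphic polynomial $h$ with
\[
p(z,\bar w)\,q(z,\bar w)\;=\;\epsilon\, h(z)\,\overline{h(w)}.
\]
In particular $p$ and $q$ are themselves nonzero (rank zero times anything is rank zero).

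The key step is the following factorization observation. In $R$, each irreducible factor of $h(z)$ is irreducible in $\mathbb{C}[z]$, and since $R=\mathbb{C}[z][\bar w]$ a polynomial depending only on $z$ remains irreducible in $R$; symmetrically for $\overline{h(w)}\in\mathbb{C}[\bar w]$. Thus the right-hand side factors in $R$ into irreducibles each of which depends only on $z$ or only on $\bar w$. Unique factorization in $R$ then forces
\[
p(z,\bar w)=\alpha(z)\,\beta(\bar w),\qquad q(z,\bar w)=\gamma(z)\,\delta(\bar w),
\]
for some nonzero polynomials $\alpha,\gamma\in\mathbb{C}[z]$ and $\beta,\delta\in\mathbb{C}[\bar w]$.

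Next I would impose Hermitian symmetry on the product $p=\alpha(z)\beta(\bar w)$. Writing $\alpha(z)=\sum_\mu A_\mu z^\mu$ and $\beta(\bar w)=\sum_\nu B_\nu \bar w^\nu$, the coefficient matrix of $p$ is the rank-one outer product $A B^{T}$, and Hermitian symmetry of $p$ is the matrix equation $AB^{T}=\overline{B}\,\overline{A}^{T}$. Since both $A$ and $B$ are nonzero, a rank-one matrix determines its factors up to a scalar, so $\overline{B}=cA$ and $\overline{A}=c^{-1}B$ for some nonzero $c\in\mathbb{C}$; consistency of the two relations forces $c=\bar c\in\mathbb{R}$. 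Consequently $\beta(\bar w)=c\,\overline{\alpha(w)}$ and
\[
p(z,\bar z)\;=\;c\,|\alpha(z)|^{2},
\]
which is rank one. The identical argument applied to $q$ gives $\mathbf{r}(q)=1$.

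The main obstacle is really just the first step: one must be sure that irreducibility in $\mathbb{C}[z]$ transfers to irreducibility in $\mathbb{C}[z,\bar w]$, so that Gauss-style unique factorization cleanly separates the two sets of variables. Once that is in place, Hermitian symmetry of a rank-one matrix is a routine calculation. A natural contrast to mention at the end is the real-analytic example $1=e^{\|z\|^{2}}e^{-\|z\|^{2}}$ noted before the statement: unique factorization of infinite series fails, which is exactly what makes the polynomial hypothesis indispensable.
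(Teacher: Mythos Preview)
The paper does not include a proof of this proposition; it simply attributes the result to [DL] and moves on. So there is no in-paper argument to compare your proposal against.

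That said, your argument is correct and is the natural one. Polarizing to the UFD $\mathbb{C}[z,\bar w]$, observing that each irreducible factor of $h(z)\overline{h(w)}$ lives in either $\mathbb{C}[z]$ or $\mathbb{C}[\bar w]$ (and that such irreducibles remain irreducible in the larger ring, since a degree-zero factorization in the adjoined variables forces the factors back into the base ring), and then invoking unique factorization to split $p$ and $q$ as $\alpha(z)\beta(\bar w)$ is exactly right. The Hermitian-symmetry step is also fine: from $AB^{T}=\overline{B}\,\overline{A}^{T}$ one gets $\overline{B}=cA$ and $\overline{A}=c^{-1}B$, and conjugating the first and substituting into the second yields $\bar c/c=1$, so $c\in\mathbb{R}$ and $p(z,\bar z)=c|\alpha(z)|^{2}$ has rank one. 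Your closing remark about $1=e^{\|z\|^{2}}e^{-\|z\|^{2}}$ matches the paper's own caveat immediately preceding the proposition, pinpointing why the polynomial hypothesis (hence unique factorization) is essential.
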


Examples from [DL] show that it is difficult to determine precisely
what happens to the rank of a Hermitian symmetric polynomial $q$ under multiplication.

The crucial information in the statement of the next proposition
is that the integers are non-zero.  We have seen 
already that we can obtain $(2,0)$ for the signature pair
of a product when one of the factors has signature pair $(A,0)$.
What happens if we insist that neither factor has signature pair $(k,0)$,
in other words, that neither factor is a squared norm? Remarkably, we can still get $(2,0)$.
In fact we can get any pair except $(0,0)$ (obviously), $(1,0)$, or $(0,1)$.

\begin{proposition} [DL] Assume $N \ge 2$. Then there exist Hermitian 
symmetric polynomials $r_1$ and $r_2$
such that ${\bf s}(r_j) = (A_j,B_j)$, none of the four integers $A_j$ or $B_j$ is zero,
and such that ${\bf s}(r_1 r_2) = (N,0)$. \end{proposition}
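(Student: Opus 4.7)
The plan is to build on Proposition 9.1 (rank collapse) by separating the all-positive factor into two mixed-signature pieces. Proposition 9.1 supplies $q, r$ with ${\bf s}(q)=(2^{m-1}+1,0)$, ${\bf s}(r)=(2^{m-2}+1,2^{m-2})$, and ${\bf s}(qr)=(2,0)$. If one could factor $q = q_1 q_2$ with each $q_j$ Hermitian symmetric and mixed-signature, then setting $r_1 = q_1$ and $r_2 = q_2 \cdot r$ would immediately produce a pair of mixed-signature Hermitian polynomials with ${\bf s}(r_1 r_2) = {\bf s}(qr) = (2,0)$, handling $N = 2$. Here $r_2$ is mixed because it is the product of two mixed polynomials $q_2$ and $r$; generically (and verifiable by inspecting a single eigenvalue) this preserves both signs.

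For general $N \ge 2$ I would bootstrap by tensoring. Choose a holomorphic polynomial mapping $h$ in fresh variables whose squared norm $||h||^2$ has an appropriate rank, and replace $r_2$ by $r_2 \cdot ||h||^2$. Then $r_1 \cdot (r_2 ||h||^2) = qr \cdot ||h||^2$ is a squared norm whose rank is generically the product of the ranks, giving ${\bf s}(r_1 r_2) = (N,0)$ for even $N$; multiplying $r_2$ by $||h||^2$ in fresh variables tensors its coefficient matrix with a positive-definite matrix, so the eigenvalue sign pattern of $r_2$ persists and the factor remains mixed. For odd $N = 2k+1$ I would engineer a single monomial collision (by letting $||h||^2$ share a variable with $q$ or $r$) so that the product rank drops by one from $2k+2$ to the desired odd value.

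The main obstacle is the base factorization $q = q_1 q_2$ with both $q_j$ mixed. A Hurwitz-type argument rules this out in the diagonal setting (where the coefficient matrix is diagonal and signature counts signs of coefficients of a real polynomial): a real polynomial with all non-negative coefficients has no root with positive real part, while a mixed-sign real polynomial necessarily does have such a root, and such roots would persist in any product. Hence the construction must leave the diagonal regime and exploit pure terms $z^k$ and $\bar z^k$ in the $q_j$, contributing off-diagonal entries to the coefficient matrix that conspire to cancel in $q_1 q_2$ while leaving each $q_j$'s spectrum mixed. A candidate is to seek $q_j$ of the form $a_j + b_j z^k + \bar b_j \bar z^k + c_j |z|^{2k}$ and solve for complex parameters $a_j, b_j, c_j$ so that the pure terms of degree $2k$ and the mixed cross terms in the product $q_1 q_2$ assemble into a positive semidefinite matrix, while each $q_j$'s $2 \times 2$ block $\begin{pmatrix} a_j & \bar b_j \\ b_j & c_j \end{pmatrix}$ has determinant $a_j c_j - |b_j|^2 < 0$ (one positive and one negative eigenvalue). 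Finding explicit values realizing this—essentially a new algebraic identity beyond the real cyclotomic factorization of Lemma 9.1—is the crux of the proof.
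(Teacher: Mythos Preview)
The paper does not prove this proposition; it is quoted from [DL] with no argument supplied here, so there is no proof in the text to compare against. Your proposal must therefore be judged on its own, and as written it is not a proof but a plan with its central step missing.

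Everything rests on factoring the all-positive factor $q$ from Proposition~9.1 as $q=q_1q_2$ with each $q_j$ of mixed signature. You correctly rule out the diagonal regime via Hurwitz stability, and then propose an off-diagonal ansatz $q_j=a_j+b_jz^k+\bar b_j\bar z^k+c_j|z|^{2k}$, ending with the admission that ``finding explicit values realizing this \dots\ is the crux of the proof.'' That is exactly the gap. Worse, your ansatz fails for the very $q$ you start from: for $m=2$ one has $q(|z|^2)=1+\sqrt 2\,|z|^2+|z|^4$, and matching coefficients in $q_1q_2$ forces $b_1b_2=0$, then $a_1b_2+a_2b_1=0$ with $a_1a_2=1$ gives $b_1=b_2=0$, which throws you back into the diagonal case. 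The remaining diagonal system $a_1a_2=c_1c_2=1$, $a_1c_2+a_2c_1=\sqrt 2$ has no real solution (it reduces to $t+1/t=\sqrt 2$). So the specific route you outline is blocked, not merely unfinished.

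There are secondary problems. The assertion that $r_2=q_2r$ is mixed ``because it is the product of two mixed polynomials'' is not an argument: the entire content of the proposition is that a product of mixed-signature Hermitian polynomials \emph{can} have signature $(N,0)$, so you cannot invoke the opposite direction without a concrete check---and you have no $q_2$ to check. The bootstrap to odd $N$ by ``engineering a single monomial collision'' is likewise too vague; sharing a variable between $qr$ and $\|h\|^2$ does not in general drop the rank of the product by exactly one. To turn this into a proof you would need an explicit pair $r_1,r_2$ achieving the $N=2$ case, and a controlled way to raise $N$ one step at a time; neither is supplied.
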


In other words, given an integer $N$ at least $2$, we can find a squared norm with rank $N$
which can be factored such that neither factor is a squared norm.

\section{bibliography}

[AM] Jim Agler and John E. McCarthy, Pick interpolation and Hilbert function spaces. 
Graduate Studies in Mathematics 44, American Mathematical Society, Providence, RI, 2002.

[Cal] E. Calabi, Isometric imbedding of complex manifolds, 
Annals of Math 58 (1953), 1-23.

[CD1] David W. Catlin and John P. D'Angelo,
A stabilization theorem for Hermitian forms and 
applications to holomorphic mappings, Math Research Letters 3 (1996), 149-166. 

[CD2] David W. Catlin and John P. D'Angelo, 
Positivity conditions for bihomogeneous polynomials, Math Research Letters 4 (1997), 1-13.

[CD3] David W. Catlin and John P. D'Angelo,
An isometric imbedding theorem for holomorphic bundles, Math Research Letters 6 (1999), 1-18.

[CS] J. A. Cima and T. J. Suffridge, Boundary behavior of rational proper maps, Duke
Math J. 35 (1988), 83-90.

[D1] John P. D'Angelo, Several Complex Variables and the Geometry of Real Hypersurfaces,
CRC Press, Boca Raton, 1993.

[D2] John P. D'Angelo, Proper holomorphic mappings, 
positivity conditions, and isometric imbedding, J. Korean Math Society, May 2003, 1-30.

[D3] John P. D'Angelo, Inequalities from Complex Analysis, Carus Mathematical Monograph No. 28, 
Mathematics Association of America, 2002.

[D4] John P. D'Angelo, An analogue of Hilbert's seventeenth problem in one complex dimension, Contemp. Math. 395 (2006), 47-58. 

[D5] John P. D'Angelo, Complex variables analogues of Hilbert's seventeenth problem,  Internat. J. Math.  16  (2005), 609-627.

[DL] John P. D'Angelo and Jiri Lebl, Hermitian symmetric polynomials and CR complexity, to appear in Journal Geometric Analysis.

[DP] John P. D'Angelo and Mihai Putinar, Hermitian complexity of real polynomial ideals, preprint 2010.

[DV] John P. D'Angelo and Dror Varolin, Positivity conditions for Hermitian symmetric functions, Asian J. Math. 7 (2003), 1-18. 

[F1] Franc Forstneric, Extending proper holomorphic maps of positive codimension, 
Inventiones Math., 95(1989), 31-62.

[G] Dusty Grundmeier, Signature Pairs for Group-Invariant Hermitian Polynomials , to appear in Intl. J. Math.

[H] David E. Handelman, Positive Polynomials, Convex Integral Polytopes, 
and a Random Walk Problem, Lecture Notes in
Mathematics 1282, Springer-Verlag, Berlin.

[HLP] G. H. Hardy, J. E. Littlewood, and G. Polya,
Inequalities, Cambridge, At the University Press, 1934.

[He] J. William Helton, ``Positive'' noncommutative polynomials are sums of squares,
Ann. of Math. (2) 156 (2002), 675-694.

[HP] J. W. Helton and M. Putinar, Positive polynomials in scalar and matrix variables, the spectral theorem, and optimization,
Operator Theorey, Structured Matrices, and Dilations, (2207), 101-176.

[L1] L. Lempert, Imbedding Cauchy-Riemann manifolds 
into a sphere, International Journal of Math 1 (1990), 91-108.

[L2] L. Lempert, Imbedding pseudoconvex domains into a ball, 
American Journal of Math 104 (1982), 901-904.

[L] E. L\o w, Embeddings and proper holomorphic maps of 
strictly pseudoconvex domains into polydiscs and balls,
Math Z. 190 (1985), 401-410.

[PD] Alexander Prestel and Charles N. Delzell, Positive Polynomials: From Hilbert's
17th Problem to Real Algebra, Springer-Verlag, Berlin, 2001.

[PF] A. Pfister, Zur Darstellung deﬁniter Funktionen als Summe von Quadraten. Invent. Math. 4 (1967), 229–237.

[PS] Mihai Putinar and Claus Scheiderer, Sums of Hermitian squares on pseudoconvex boundaries, to appear in Math Research Letters.

[Q] Daniel G. Quillen, On the Representation of Hermitian Forms
as Sums of Squares, Inventiones math. 5 (1968), 237-242.

[R1] B. Reznick, Some concrete aspects of Hilbert's 17th Problem, Contemp. Math. 253 (200), 251-272. 

[R2] B. Reznick, Uniform denominators in Hilbert's seventeenth problem, Math. Z. 220 (1995) 75-97.

[RR] M. Rosenblum and J. Rovnyak, The factorization problem for
non-negative operator valued functions, Bulletin A.M.S. 77 (1971),
287-318.

[S] Claus Scheiderer, Positivity and sums of squares: a guide to recent results,
pages 271-324 in Emerging applications of algebraic geometry, IMA Vol. Math. Appl., 149, Springer, New York, 2009. 

[Str] Emil Straube, Lectures on the $L^2$- Sobolev Theory of the ${\overline \partial}$-Neumann Problem, ESI Lectures in Mathematics and Physics, European Math. Society, 2010.

[TY] Wing-Keung To and  Sai-Kee Yeung,
Effective isometric embeddings for certain Hermitian holomorphic line bundles,  J. London Math. Soc. (2)  73  (2006),  no. 3, 607-624.

[V] Dror Varolin, Geometry of Hermitian algebraic functions: Quotients of squared norms,  American Journal of Math  130  (2008), 291-315.

\end{document}